\newtheorem{thm}{Theorem}[section]
\newtheorem{la}[thm]{Lemma}
\newtheorem{Defn}[thm]{Definition}
\newtheorem{Remark}[thm]{Remark}
\newtheorem{Conj}[thm]{Conjecture}
\newtheorem{prop}[thm]{Proposition}
\newtheorem{cor}[thm]{Corollary}
\newtheorem{Example}[thm]{Example}
\newtheorem{Examples}[thm]{Examples}
\newenvironment{defn}{\begin{Defn}\rm}{\end{Defn}}
\newenvironment{example}{\begin{Example}\rm}{\end{Example}}
\newenvironment{examples}{\begin{Examples}\rm}{\end{Examples}}
\newenvironment{rem}{\begin{Remark}\rm}{\end{Remark}}
\newtheorem{Number}[thm]{\!\!}
\newenvironment{numba}{\begin{Number}\rm}{\end{Number}}
\newcommand{\cV}{{\mathcal V}}
\newcommand{\cE}{{\mathcal E}}
\newcommand{\cL}{{\mathcal L}}
\newcommand{\cA}{{\mathcal A}}
\newcommand{\cF}{{\mathcal F}}
\newcommand{\scrV}{{\mathscr V}}
\newcommand{\scrU}{{\mathscr U}}
\newcommand{\scrG}{{\mathscr G}}
\DeclareMathOperator{\spn}{span}
\newcommand{\ve}{\varepsilon}
\newcommand{\R}{{\mathbb R}}
\newcommand{\N}{{\mathbb N}}
\newcommand{\mto}{\mapsto}
\newcommand{\sub}{\subseteq}
\DeclareMathOperator{\id}{id}
\DeclareMathOperator{\intOP}{int}
\DeclareMathOperator{\Spann}{span}
\DeclareMathOperator{\Gr}{Gr}
\DeclareMathOperator{\Ext}{Ext}
\DeclareMathOperator{\Evol}{Evol}
\DeclareMathOperator{\Fl}{Fl}
\DeclareMathOperator{\ev}{ev}
\newcommand{\cg}{{\mathfrak g}}
\DeclareMathOperator{\Supp}{supp}
\DeclareMathOperator{\aff}{aff}
\DeclareMathOperator{\ind}{ind}
\DeclareMathOperator{\algint}{algint}
\DeclareMathOperator{\str}{str}
\DeclareMathOperator{\conv}{conv}
\DeclareMathOperator{\Diff}{Diff}
\DeclareMathOperator{\Vect}{Vect}
\DeclareMathOperator{\fr}{fr}
\DeclareMathOperator{\im}{im}
\begin{document}
\begin{center}
{\bf\Large Boundary values of diffeomorphisms\vspace{2mm}
of simple polytopes, and controllability}\\[4.8mm]
{\bf Helge Gl\"{o}ckner, Erlend Grong and Alexander Schmeding}
\end{center}
\begin{abstract}
%\hspace*{-4.8mm}
\noindent We consider the Lie group of smooth diffeomorphisms $\Diff(M)$ of a simple polytope $M$ in the euclidean space. Simple polytopes are special cases of manifolds with corners. The geometric setting allows to study in particular, the subgroup of face respecting diffeomorphisms and its Lie theoretic properties. We find a canonical Lie group structure for the quotient of the diffeomorphism by the subgroup $\Diff^{\partial,\id}(M)$ of maps that equal the identity on the boundary, turning the canonical quotient homomorphism
$\Diff(M)\to\Diff(M)/\Diff^{\partial,\id}(M)$
into a smooth submersion. We also show that the identity component of the diffeomorphism group is generated by the exponential image, by proving general controllability results.
\vspace{3mm}
\end{abstract}
\textbf{MSC 2020 subject classification:}
58D05 (primary);
% Groups of diffeomorphisms and homeomorphisms as manifolds 
%
22E65,
% inf-dim Lie groups
%
34H05,
% Control problems involving ordinary differential equations
%
52B08,
% Combinatorial properties of polytopes and polyhedra
%
52B70,
% polyhedral manifolds
%
53C17
% Sub-Riemannian geometry
(secondary)\\[2.3mm]
\textbf{Keywords:}
Simple polytope, manifold with corners,
diffeomorphism group,
quotient, controllability, extension operator.

\tableofcontents

\section{Introduction}
We obtain results concerning diffeomorphism
groups of polytopes,
which are always assumed to be \emph{convex} polytopes.
Given a polytope $M\sub\R^n$
with non-empty interior,
the group $\Diff(M)$ of $C^\infty$-diffeomorphisms
$\phi\colon M\to M$
can be considered as a Lie group~\cite{Glo23}.
As a consequence, $\Diff(M)$ is a Lie group
for any polytope~$M$ in a finite-dimensional
vector space (see \ref{DiffP}).
The diffeomorphisms which are \emph{face-respecting}
in the sense that $\phi(F)=F$ for each face $F$ of~$M$
form an open normal subgroup $\Diff^{{\fr}}(M)$
of $\Diff(M)$ of finite index.
We obtain results for an important
class of polytopes, the \emph{simple}
polytopes (see \cite[\S12]{Bro83}, or also \S5 in \cite[Chapter 6.5]{Bar02}).
An $n$-dimensional
polytope
is \emph{simple}
if each vertex is contained in precisely $n$ edges
of~$M$ (see Definition~\ref{defn-sim} for this
and other known characterizations
of simplicity).
We show that a polytope is simple
if and only if it can be
regarded as a smooth manifold with corners
in the sense of~\cite{Cer61, Dou61, Mic80}
(see Definition~\ref{defnculi}, \ref{poly-rough},
and Proposition~\ref{main-equivalence}).
Each polytope in $\R^1$ or $\R^2$
is simple, as well as all cubes $[0,1]^n$,
all simplices (like the tetrahedron)
and the dodecahedron (see Examples~\ref{exa-simp}).
Our first main result is the following.
\begin{thm}\label{thm-A}
Let $M$ be a simple polytope
of dimension $n\geq 2$ and let $\ell \in \{1,\ldots, n-1\}$.
Let $\cF$ be the set of all faces of~$M$
of dimension~$\ell$.
Then the image $\im(\rho)$ of the group homomorphism
\[
\rho\colon \Diff^{\fr}(M)\to\prod_{F\in \cF}\Diff^{\fr}(F),\quad
\phi\mto (\phi|_F)_{F\in\cF}
\]
is a submanifold of the direct product
$\prod_{F\in \cF}\Diff^{\fr}(F)$ and hence a Fr\'{e}chet--Lie group.
The submanifold structure
turns $\rho\colon \Diff^{\fr}(M)\to\im(\rho)$
into a smooth submersion.\footnote{In the sense of \cite[Definition~4.4.8]{Ham82}.}
In particular, the latter map admits smooth local sections. If $\ell=1$, then
$\cF$ is the set of all edges of~$M$ and
\[
\rho(\Diff(M)_0)=\prod_{F\in \cF}\Diff(F)_0
\]
holds for the connected components of the identity.
\end{thm}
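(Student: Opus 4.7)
The plan is to reduce the theorem to the construction of a continuous linear extension operator for face-tangent vector fields, and then to exponentiate. Using \ref{poly-rough} and Proposition~\ref{main-equivalence}, I view~$M$ as a smooth manifold with corners; each $F\in\cF$ is itself a simple polytope and embeds as a submanifold with corners of~$M$. Write $\Vect^{\fr}(M)$ for the Fr\'echet space of smooth vector fields on~$M$ tangent to every face of~$M$ (the Lie algebra of $\Diff^{\fr}(M)$ in the sense of~\cite{Glo23}), and analogously $\Vect^{\fr}(F)$ for each $F\in\cF$. Let $\cV\subseteq\prod_{F\in\cF}\Vect^{\fr}(F)$ be the closed subspace of \emph{compatible} tuples $(X_F)_{F\in\cF}$ satisfying $X_F|_{F\cap F'}=X_{F'}|_{F\cap F'}$ on every common subface. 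Tuples arising from elements of $\Diff^{\fr}(M)$ are automatically compatible, so near the identity I expect $\im(\rho)$ to coincide with the space of compatible tuples.

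The main technical step is the construction of a continuous linear \emph{extension operator}
\[
E\colon \cV\longrightarrow \Vect^{\fr}(M),\qquad E((X_F))|_{F'}=X_{F'}\ \text{for all}\ F'\in\cF.
\]
Near a point on facets $F_{i_1},\dots,F_{i_r}$, simplicity provides a chart modelled on $[0,\infty)^r\times\R^{n-r}$ in which every face through the point appears as a coordinate subset and every face-tangent vector field has its corresponding boundary components vanishing. In such a chart, a Seeley-type formula extends vector fields tangentially while preserving vanishing on the remaining boundary strata. A partition of unity on~$M$ subordinate to the facial stratification glues these local extensions into~$E$; compatibility of the input $(X_F)$ is precisely what is needed for the glued field to be well defined along the full $\ell$-skeleton before extension inward. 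The simple polytope hypothesis is essential here, since the transversal meeting of facets at each corner splits the local problem into a product of half-line extensions. I expect this to be the main obstacle: one must simultaneously respect tangency to every face through each corner, and the local pieces must glue coherently across the stratification.

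With $E$ in hand, local sections of~$\rho$ are produced by exponentiation. The exponential maps of $\Diff^{\fr}(M)$ and of each $\Diff^{\fr}(F)$ are local diffeomorphisms near their respective identities, so a tuple $(\psi_F)\in\im(\rho)$ close to $(\id_F)$ is uniquely of the form $(\exp_F(X_F))$ for some compatible $(X_F)\in\cV$. Set $\sigma((\psi_F)):=\exp_M(E((X_F)))$. Since $E((X_F))$ is tangent to~$F$ and restricts to $X_F$ on~$F$, its flow preserves~$F$ and on~$F$ agrees with the flow of~$X_F$, giving $\rho(\sigma((\psi_F)))=(\psi_F)$; smoothness of $\sigma$ follows from smoothness of the exponentials together with the linearity and continuity of~$E$. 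Left-translation yields smooth local sections at every point of $\im(\rho)$, and the standard machinery for Lie groups modelled on locally convex spaces endows $\im(\rho)$ with a Fr\'echet submanifold structure in $\prod_{F\in\cF}\Diff^{\fr}(F)$, turning $\rho$ into a smooth submersion in the sense of \cite[Definition~4.4.8]{Ham82}.

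For the case $\ell=1$, each edge~$F$ satisfies $\Diff^{\fr}(F)=\Diff(F)_0$, and any two distinct edges meet in at most a vertex which every face-respecting diffeomorphism of~$F$ already fixes; hence \emph{every} tuple in $\prod_{F\in\cF}\Diff(F)_0$ is automatically compatible. The local section then shows $\im(\rho)$ is an open subgroup of the connected group $\prod_{F\in\cF}\Diff(F)_0$, and therefore equals it. Since $\rho$ admits local sections, a continuous path from $(\id_F)$ to a prescribed tuple $(\phi_F)$ can be lifted by covering $[0,1]$ with finitely many domains of local sections and gluing; the endpoint of the lift lies in $\Diff^{\fr}(M)_0=\Diff(M)_0$. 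This yields $\prod_{F\in\cF}\Diff(F)_0\subseteq\rho(\Diff(M)_0)$, and the reverse inclusion is immediate by continuity of~$\rho$.
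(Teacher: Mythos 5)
Your plan correctly identifies the key technical input, namely a continuous linear extension operator $E\colon\cV\to\Vect^{\fr}(M)$ for compatible face-tangent vector fields; this is exactly what the paper provides (as $\tau$ in Corollary~\ref{vecfield-cor}, built on Theorem~\ref{thm-B}). However, the step where you turn this into local sections of $\rho$ via exponentiation has a genuine gap. You assume that the exponential maps of $\Diff^{\fr}(M)$ and of each $\Diff^{\fr}(F)$ are local diffeomorphisms near the identity and define $\sigma((\psi_F)):=\exp_M(E((X_F)))$ with $(X_F)=\exp_F^{-1}(\psi_F)$. For diffeomorphism groups this is false: even for a compact manifold without boundary, $\exp_{\Diff(M)}$ is neither locally injective nor locally surjective near the identity (this is the classical pathology going back to Omori and emphasised in Milnor's survey), so $\exp_F^{-1}$ is not a well-defined local inverse and the formula for $\sigma$ does not make sense.

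The paper avoids the exponential entirely by working with the global chart $\theta_M\colon\Diff^{\fr}(M)\to\Omega_M\sub C^\infty_{\str}(M,E)$, $\phi\mto\phi-\id_M$, coming from the Lie group structure of~\cite{Glo23}. The crucial point your proposal misses is the exact intertwining
$$\Bigl(\prod_{F\in\cF}\theta_F\Bigr)\circ\rho \;=\; R\circ\theta_M,$$
where $R$ is the restriction map on stratified vector fields. Since $R$ is continuous linear with a continuous linear right inverse, it is an open map and a submersion in the sense of \cite[Definition~4.4.8]{Ham82}; the identity above transports this directly to $\rho$, and openness of $R(\Omega_M)$ in $\cV$ gives the submanifold structure on $\im(\rho)$ without ever inverting an exponential. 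If you replace your exponential step by this translation-chart argument, the rest of your outline (including the $\ell=1$ observation that compatibility is automatic because stratified vector fields on edges vanish at vertices, so $\im(\rho)$ is an open subgroup of the connected group $\prod_{F}\Diff(F)_0$) goes through and matches the paper.
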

\noindent
The Fr\'{e}chet--Lie group $\im(\rho)$ is $L^1$-regular in the sense of \cite{Glo15,Nik21}
and hence a regular Lie group in Milnor's sense~\cite{Mil84}
(see Proposition~\ref{is-regular}).
\begin{rem}\label{post-thm-B}
Taking $\ell=n-1$, the kernel of~$\rho$ is the Lie subgroup
$\Diff^{\partial,\id}(M)$ of $\Diff(M)$
consisting of all diffeomorphisms
which fix the boundary pointwise.
In particular, the theorem shows that $\Diff^{\fr}(M)/\Diff^{\partial,\id}(M)$
can be made a regular Fr\'{e}chet--Lie group in such a way that
\[
\Diff^{\fr}(M)\to \Diff^{\fr}(M)/\Diff^{\partial,\id}(M)
\]
is a smooth $\Diff^{\partial,\id}(M)$-principal bundle,\footnote{In the sense
of~\cite[Definition 3.7.23]{GN25}.}
using the right multiplication of $\Diff^{\partial,\id}(M)$
on $\Diff^{\fr}(M)$.
\end{rem}
\begin{cor}\label{new-cor}
For each simple polytope~$M$
of dimension $\geq 2$,
there exists a regular Fr\'{e}chet--Lie group structure
on $\Diff(M)/\Diff^{\partial,\id}(M)$
which turns the canonical quotient homomorphism
\[
\Diff(M)\to \Diff(M)/\Diff^{\partial,\id}(M)
\]
into a smooth submersion
and makes $\Diff(M)$ a smooth $\Diff^{\partial,\id}(M)$-principal bundle
using the right multiplication of $\Diff^{\partial,\id}(M)$
on $\Diff(M)$.
\end{cor}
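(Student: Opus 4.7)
The plan is to deduce the corollary from the case $\ell=n-1$ of Theorem~\ref{thm-A} together with Remark~\ref{post-thm-B} by extending the regular Fr\'{e}chet--Lie group structure on $H:=\Diff^{\fr}(M)/\Diff^{\partial,\id}(M)$ to the finite-index overgroup $G:=\Diff(M)/\Diff^{\partial,\id}(M)$ via the standard open-subgroup construction. As a preliminary step, I verify that $\Diff^{\partial,\id}(M)$ is normal in the whole of $\Diff(M)$, not only in $\Diff^{\fr}(M)$: any $\phi\in\Diff(M)$ maps $\partial M$ to itself, so $\phi\psi\phi^{-1}$ fixes $\partial M$ pointwise whenever $\psi$ does. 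Thus $G$ exists as an abstract group, and $H$ sits inside $G$ as a subgroup of the same finite index as $\Diff^{\fr}(M)$ has in $\Diff(M)$.

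The technical heart of the argument is to show that for each $\phi\in\Diff(M)$ the conjugation $c_\phi\colon\psi\mapsto\phi\psi\phi^{-1}$ descends to a smooth Lie group automorphism $\bar c_\phi$ of $H$. The map $c_\phi$ is smooth on the Lie group $\Diff(M)$, and normality of both $\Diff^{\fr}(M)$ and $\Diff^{\partial,\id}(M)$ in $\Diff(M)$ makes the descent well defined on abstract groups, with $\bar c_{\phi^{-1}}$ as two-sided inverse. Smoothness of $\bar c_\phi$ then follows by composing $c_\phi$ on the right with the smooth local sections of the quotient submersion $\Diff^{\fr}(M)\to H$ supplied by Theorem~\ref{thm-A}, and with the quotient map on the left. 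Choosing coset representatives $g_1=e,g_2,\ldots,g_k$ for $H$ in $G$, I then equip $G=\bigsqcup_{i} g_iH$ with the unique manifold structure in which each left translation $L_{g_i}\colon H\to g_iH$ is a diffeomorphism and $H$ is an open submanifold. Smoothness of multiplication and inversion in $G$ reduces, via identities of the form $(g_ih)(g_jh')=g_\ell\cdot h_{ij}\cdot\bar c_{g_j^{-1}}(h)\cdot h'$ (writing $g_ig_j=g_\ell h_{ij}$ with $h_{ij}\in H$), to smoothness of the operations in $H$ and of the $\bar c_{g_j^{-1}}$.

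Since $H$ is open in $G$, the two groups share identity component and Lie algebra, and the $L^1$-evolution map of $H$ directly witnesses $L^1$-regularity of $G$. The canonical quotient map $\pi\colon\Diff(M)\to G$ is a smooth submersion because its restriction to each open coset $\phi_i\Diff^{\fr}(M)$, for any lift $\phi_i\in\Diff(M)$ of $g_i$, factors (up to left translations by $\phi_i^{-1}$ and $g_i$) through the smooth submersion $\Diff^{\fr}(M)\to H$ of Remark~\ref{post-thm-B}; the principal $\Diff^{\partial,\id}(M)$-bundle structure transports in the same way. The main obstacle is the smooth descent of conjugation, which is however a routine consequence of the smooth local sections supplied by Theorem~\ref{thm-A}.
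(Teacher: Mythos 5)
Your argument is correct and follows essentially the same route the paper takes: reduce to the $\ell=n-1$ case of Theorem~\ref{thm-A} via Remark~\ref{post-thm-B}, verify that $\Diff^{\partial,\id}(M)$ is normal in all of $\Diff(M)$, and then extend the Lie group structure from the open quotient $H=\Diff^{\fr}(M)/\Diff^{\partial,\id}(M)$ to $G=\Diff(M)/\Diff^{\partial,\id}(M)$ by proving that conjugations descend to smooth automorphisms of $H$ (using the smooth local sections / submersion property of $\Diff^{\fr}(M)\to H$). The paper packages exactly this extension step as a self-contained lemma (Lemma~\ref{from-open}) whose proof uses the Bourbaki-style local description of Lie group structures and makes no finiteness assumption; you instead exploit the finite index of $\Diff^{\fr}(M)$ in $\Diff(M)$ to build the manifold structure on $G$ explicitly coset by coset via left translations. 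Both mechanisms are valid, and the remaining observations -- regularity of $G$ from openness of $H$, and transporting the $\Diff^{\partial,\id}(M)$-principal bundle trivializations coset-wise -- match the paper in substance.
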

\noindent
The proof of Theorem~\ref{thm-A} uses
a result concerning
continuous linear extension\linebreak
operators
for compatible vector-valued $C^m$-functions on faces of polytopes,
which entails
an extension result for the relevant
vector fields (Corollary~\ref{vecfield-cor}).

\begin{thm}\label{thm-B}
Let $M$ be a simple polytope
of dimension $n\geq 1$. Let $\ell\in \{1,\ldots, n-1\}$
and $\cF$ be the set of all faces of~$M$ of dimension~$\ell$.
Let $Y$ be a locally convex topological vector space
and $m\in \N_0\cup\{\infty\}$.
Let $\cE$ be the closed vector subspace
of $\prod_{F\in \cF}C^m(F,Y)$
consisting of all $(f_F)_{F\in \cF}$ such that
\[
f_F|_{F\cap G}=f_G|_{F\cap G}\quad\mbox{for all $F,G\in \cF$.}
\]
Then the mapping
\[
r\colon C^m(M,Y)\to \cE,\quad f\mto (f|_F)_{F\in \cF}
\]
is surjective, continuous, linear, and admits a continuous linear
right inverse $\sigma\colon \cE\to C^m(M,Y)$.
\end{thm}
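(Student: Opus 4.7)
The plan is to construct $\sigma$ explicitly, reducing by downward induction on $\ell$ to the facet case ($\ell=n-1$), where an inclusion--exclusion formula in corner charts, glued by a smooth partition of unity, gives the required right inverse.

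\textbf{Base case $\ell=n-1$.} A simple polytope carries the structure of a compact smooth manifold with corners, so $M$ admits a finite cover by corner charts $\phi_\alpha\colon U_\alpha\to V_\alpha\sub[0,\infty)^{k_\alpha}\times\R^{n-k_\alpha}$ in which the facets of $M$ passing through $U_\alpha$ appear as the coordinate hyperplanes $\{y_i=0\}$, $i=1,\ldots,k_\alpha$. Compatibility of $(f_F)_{F\in\cF}$ defines, for each nonempty $S\sub\{1,\ldots,k_\alpha\}$, a common restriction $f_S\in C^m(H_S,Y)$ with $H_S:=\bigcap_{i\in S}\{y_i=0\}$. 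Set
\[
\tilde f_\alpha(y):=\sum_{\emptyset\neq S\sub\{1,\ldots,k_\alpha\}}(-1)^{|S|+1}f_S(\pi_S(y)),
\]
where $\pi_S$ zeros the coordinates indexed by $S$. A telescoping calculation (group the $S$'s by $T=S\cup S_0$ and use $\sum_{C\sub S_0}(-1)^{|C|}=0$ when $S_0\neq\emptyset$) shows $\tilde f_\alpha|_{H_{S_0}}=f_{S_0}$ for every nonempty $S_0\sub\{1,\ldots,k_\alpha\}$. Choose a smooth partition of unity $(\chi_\alpha)$ subordinate to this cover and set $\sigma((f_F)_F):=\sum_\alpha\chi_\alpha\tilde f_\alpha$. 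For $x$ in a facet $F$, each $\tilde f_\alpha(x)$ with $x\in U_\alpha$ equals $f_F(x)$, and $\sum_\alpha\chi_\alpha(x)=1$ yields $\sigma((f_F)_F)|_F=f_F$; hence $r\circ\sigma=\id_\cE$. Continuity and linearity of $\sigma$ are clear since each ingredient --- restriction, pullback by $\pi_S$, multiplication by smooth $\chi_\alpha$, and finite summation --- is continuous linear in the $C^m$-topology.

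\textbf{Inductive step.} For $\ell<n-1$, assume the theorem for $\ell+1$ on every simple polytope. Each $(\ell+1)$-face $G$ of $M$ is itself a simple polytope of dimension $\ell+1$ (the corner model of $M$ at a vertex $v\in G$ exhibits $G$ as a coordinate sub-corner), and its facets are precisely the $\ell$-faces of $M$ contained in $G$. Applying the base case to $G$ produces $\tilde f_G\in C^m(G,Y)$ continuously and linearly in the data. The crucial additional property of the inclusion--exclusion formula is that $\tilde f_G|_H=f_H$ on \emph{every} sub-face $H$ of $G$ (not merely on facets), so for two $(\ell+1)$-faces $G_1,G_2$ meeting in a common face $H$ of $M$, both $\tilde f_{G_i}$ restrict to the same $f_H$ on $H$. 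The family $(\tilde f_G)_G$ therefore lies in the analogous compatibility subspace for dimension $\ell+1$, and the inductive hypothesis extends it to $C^m(M,Y)$.

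\textbf{Obstacles.} Continuity and linearity of $r$ are immediate, and $\cE$ is closed in $\prod_{F\in\cF}C^m(F,Y)$ as the intersection of kernels of the continuous linear maps $(f_F)_F\mto f_F|_{F\cap G}-f_G|_{F\cap G}$. Surjectivity of $r$ follows from $r\circ\sigma=\id_\cE$. The main technical hurdle is the combinatorial verification that the local inclusion--exclusion extension restricts correctly to \emph{all} sub-faces --- this stronger property is what makes the partition-of-unity gluing well-defined in the facet case and also what makes the inductive step preserve compatibility on lower-dimensional intersections.
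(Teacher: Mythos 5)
Your proposal takes essentially the same route as the paper: the facet case is handled by an inclusion--exclusion extension in corner (standard) charts glued via a smooth partition of unity, and lower $\ell$ is reached by first extending over $(\ell+1)$-faces and then invoking the statement for $\ell+1$ inductively, with compatibility on overlaps $G_1\cap G_2$ checked exactly as you do. The only presentational differences --- your direct combinatorial verification of the inclusion--exclusion identity versus the paper's induction on the chart dimension, and your making explicit that the local extension restricts correctly to every sub-face --- do not change the argument.
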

\noindent
We mention that, for $\ell=1$ and $Y\not=\{0\}$,
the conclusion of Theorem~\ref{thm-B}
becomes false for each $n$-polytope~$M$
which is not simple. Likewise, the conclusion of
Corollary~\ref{vecfield-cor} becomes false
if $\ell=1$, for each non-simple $n$-polytope~$M$
See Remark~\ref{converse} for details.\\[2mm]
We also study controllability on simple
polytopes, where our result can be summarised as follows. For a simple polytope $M$, let $\Vect_{\mathrm{str}}(M)$ be the space of (smooth) vector fields $X$ on $M$ such that if $x$ is contained in a face $F$, then $X(x) \in T_x F$ where the face is a manifold with corners as in \Cref{mfd-corners}. The principal part of such a vector field is a stratified vector field as defined in \Cref{facesandstratvec} (cf.\ \cite[Remark 5.7]{Glo23}). Since the stratified vector fields form the Lie algebra of the group of face-respecting diffeomorphisms $\Diff^{\fr}(M)$, we exploit that this group is regular in the sense of Milnor, \cite{Glo23}. The evolution of smooth Lie algebra valued curves exist and are smooth. One can show (see Section \ref{sect:control} for details) that the flow of the smooth time dependent vector field $\hat{X}(t) := tX \in \Vect_{\mathrm{str}}(M)$ is a one parameter curve of time-dependent diffeomorphisms
$$\varphi_t = e^{tX} =\Evol (\hat{X})(t).$$
Here $\Evol$ is the evolution map for the regular Lie group $\Diff^{\fr} (M)$. In particular, $\varphi_t$ is a face preserving diffeomorphism of $M$ for each fixed $t$. To avoid confusion, let us stress that since $\Diff^{\fr}(M)$ is an open subgroup of $\Diff(M)$, the connected component of the identity in $\Diff^{\fr}(M)$ coincides with the one of $\Diff(M)$. We will thus suppress the distinction in the following to shorten the notation.

For a subspace $\scrV$ of $\Vect_{\mathrm{str}}(M)$, the subgroup of $\Diff(M)_0$ generated by it is denoted by
$$\Gr(\scrV) = \left\{ e^{X_1} \circ \cdots \circ e^{X_k} \, : \, X_1, \dots, X_k \in \scrV, k=0,1,2,\dots \right\} $$
We further write $\hat \scrV$ for the $C^\infty(M)$-module generated by $\scrV$
$$\hat \scrV = \spn_{C^\infty(M)} \scrV =  \spn_{\R}\left\{ fX \, :\, f\in C^\infty(M), X \in \scrV \right\}.$$
With this notation we have the following result.
\begin{thm}\label{thm-control}
Let $M$ be a simple polytope of dimension $n \geq 1$, and define $\scrV$ as a subspace of $\Vect_{\mathrm{str}}(M)$. Assume that the following conditions are satisfied.
\begin{enumerate}[\rm (I)]
\item $\Gr(\scrV)$ acts transitively on the interior of $M$ and on the interior of each face $F$.
\item For any face $F$, if we we write it as an intersection of facets $F =\cap_{j=1}^i \hat F_j$ then for every $j=1,\dots,i$, there exists a point $x_j \in F$, neighbourhood $V_j$ of $x_j$ in $M$ and a vector field $Z_j \in \hat \scrV $ satisfying
$$Z_j|_{V_j \cap \hat F_j} =0, \qquad \nabla_{\nu_j} Z_j(x_j) \neq T_{x_j}\hat F_j$$
relative to any covariant derivative $\nabla$ on $M$ and any local vector field $\nu_j$ on $M$ satisfying $\nu_j(x) \neq T_{x_j} \hat F_j$.
\end{enumerate}
Then we have
$$\Gr(\hat \scrV) = \Diff(M)_0.$$
\end{thm}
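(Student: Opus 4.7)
The inclusion $\Gr(\hat\scrV)\subseteq \Diff(M)_0$ is immediate: for each $X\in\hat\scrV$ the curve $t\mapsto e^{tX}$ is a smooth path in $\Diff^{\fr}(M)$ from $\id_M$ to $e^X$, and the identity component of $\Diff^{\fr}(M)$ coincides with $\Diff(M)_0$. For the reverse inclusion, the plan is to show that $\Gr(\hat\scrV)$ contains an open neighbourhood of $\id_M$ in $\Diff(M)_0$; since $\Gr(\hat\scrV)$ is a subgroup and an open subgroup of a topological group is closed, this forces $\Gr(\hat\scrV)=\Diff(M)_0$ by connectedness of the identity component.

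To produce such a neighbourhood, I would first establish a local accessibility statement: for each $x\in M$, the orbit of $\Gr(\hat\scrV)$ through $x$ contains an open neighbourhood of $x$ inside the face $F$ whose relative interior contains $x$. This is proved by induction on the codimension $k$ of $F$. For $k=0$, condition~(I) combined with a Chow--Rashevsky argument gives that the Lie brackets generated by $\hat\scrV$ span $T_xM$ at every $x\in\intOP(M)$, and a Stefan--Sussmann type orbit theorem then yields accessibility. For $x$ on a facet $\hat F$, condition~(II) supplies $Z\in\hat\scrV$ vanishing on $V\cap\hat F$ with $\nabla_\nu Z(x)\notin T_x\hat F$; its flow fixes $V\cap\hat F$ pointwise but translates nearby points off $\hat F$ with non-zero first-order transverse rate, and combined with the inductive hypothesis applied along $\hat F$ (to which conditions (I) and (II) descend by restriction, since the facets of $\hat F$ are its intersections with the remaining facets of $M$ and a covariant derivative on $M$ preserving tangency to $\hat F$ can be chosen) this gives accessibility at $x$. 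At a corner of codimension $k\geq 2$, the simple-polytope local model $[0,\infty)^k\times\R^{n-k}$ guaranteed by Proposition~\ref{main-equivalence} allows (II) to be applied at each of the $k$ facets through $x$, producing a transverse frame which closes the induction.

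Local accessibility is then upgraded to an identity-neighbourhood via regularity of $\Diff^{\fr}(M)$: any $\phi$ close enough to $\id_M$ can be written as $\Evol(X)(1)$ for a small smooth curve $X\colon [0,1]\to\Vect_{\mathrm{str}}(M)$, and this evolution is decomposed over a fine subdivision of $[0,1]$ into short-time flows, each approximated by a product of $\hat\scrV$-flows selected using the local accessibility, with the resulting product converging to $\phi$ inside $\Gr(\hat\scrV)$ thanks to the regular-Lie-group structure and a BCH/Trotter-type estimate. The principal obstacle is the accessibility step at corners of codimension $\geq 2$: condition~(II) is stated only for facets, so the transverse fields produced at the several transversally-meeting facets through a corner must be combined into a frame spanning the full normal bundle of the corner stratum; this hinges on the linear-algebraic transversality coming from simplicity of $M$, on careful $C^\infty(M)$-module bookkeeping to ensure that bump-function cut-offs preserve the vanishing and first-order transversality conditions in (II) simultaneously at all relevant facets, and on an adaptation of Stefan--Sussmann-type arguments to the stratified vector-field setting on manifolds with corners developed earlier in the paper.
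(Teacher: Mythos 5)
Your top-level plan — show that $\Gr(\hat\scrV)$ contains an identity neighbourhood, then conclude by connectedness — is the right skeleton and matches the paper. But the two intermediate steps you propose both have genuine gaps, and the paper proceeds quite differently.

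First, ``local accessibility'' in the sense you state it — that the $\Gr(\hat\scrV)$-orbit through $x$ is open in the face containing $x$ — is far weaker than what is needed and does not push through to the conclusion. Orbit-openness says nothing about how a given diffeomorphism near $\id_M$ can be \emph{factored} into flows; two very different subgroups of $\Diff(M)_0$ can have identical open orbits. The paper instead uses as its local engine an Agrachev--Caponigro-type \emph{factorization} result (Lemma~\ref{lem:Xn}, from \cite{AgCa09}): if $X_1,\dots,X_n$ span $T_0\R^n$ then every germ $\psi$ near $\id$ fixing $0$ can be written \emph{exactly} as $e^{f_1X_1}\circ\cdots\circ e^{f_nX_n}$ with $f_j\in C^\infty$, $f_j(0)=0$. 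This is precisely where the passage from $\scrV$ to the $C^\infty(M)$-module $\hat\scrV$ is used, and it is a stronger and structurally different input than Chow--Rashevsky or Stefan--Sussmann orbit theory. Condition~(II) is then exploited not merely to produce a transverse vector at each facet, but to write $Z_j=y_j\hat Z_j$ in adapted coordinates and to invoke Lemma~\ref{lemma:varphi} to guarantee that the coefficient functions $\hat g_j$ arising from the factorization already carry the necessary vanishing factor $y_j$, so that the whole product lands in $\Gr(\hat\scrV)$ while respecting the facets; the Seeley extension (Lemma~\ref{lem:Seely}) is what lets one apply the open-cube factorization near a corner.

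Second, the ``upgrade'' step you propose — subdividing $[0,1]$, approximating short-time flows by $\hat\scrV$-flows, and passing to a Trotter/BCH-type limit — does not yield membership of the limit in $\Gr(\hat\scrV)$: a limit of products of exponentials of $\hat\scrV$-fields lies in $\Gr(\hat\scrV)$ only if $\Gr(\hat\scrV)$ is closed, which is exactly what is to be shown. The paper avoids any limiting process entirely. Instead it combines the local \emph{exact} factorizations over a finite cover via the standard fragmentation argument for diffeomorphism groups: for $\phi$ near $\id_M$ and a partition of unity $(\lambda_j)$ subordinate to the cover, one sets $\psi_j(x)=\varphi_{\lambda_1(x)+\cdots+\lambda_j(x)}(x)$ along a path $\varphi_s$ from $\id_M$ to $\phi$, and then $\phi_j=\psi_j\circ\psi_{j-1}^{-1}$ have small support and compose exactly to $\phi$. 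Each $\phi_j$ then lies in $\Gr(\hat\scrV)$ by the Localization Lemma. So: the high-level outline is shared, but the local lemma you reach for is too weak, and the globalization you reach for introduces a closure problem; replacing accessibility by the Agrachev--Caponigro factorization and Trotter approximation by fragmentation closes both gaps.
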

Note that condition (II) in \Cref{thm-control} does not depend on the choice of covariant derivative. Setting $\scrV = \Vect_{\mathrm{str}}(M)$, we get the following corollary of Theorem~\ref{thm-control}.
\begin{cor}\label{cor-connected-component}
For each simple polytope~$M$
of dimension $n\geq 1$,
the identity component $\Diff(M)_0$
of $\Diff(M)$
is generated by the exponential image.
\end{cor}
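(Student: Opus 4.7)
The plan is to apply Theorem~\ref{thm-control} with $\scrV := \Vect_{\mathrm{str}}(M)$. A first observation is that $\Vect_{\mathrm{str}}(M)$ is already a $C^\infty(M)$-module, because multiplying a stratified vector field by a smooth function on $M$ preserves tangency to every face; hence $\hat\scrV = \scrV$, and the conclusion $\Gr(\hat\scrV)=\Diff(M)_0$ of Theorem~\ref{thm-control} coincides with the assertion that $\Diff(M)_0$ is generated by the exponential image $\{e^X : X \in \Vect_{\mathrm{str}}(M)\}$. What remains is to verify hypotheses (I) and (II) of Theorem~\ref{thm-control}.

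For (I), given two points $p,q$ in the interior of a face $F$ of $M$ (allowing $F=M$), I would connect them by a smooth embedded path in $\intOP(F)$ and use a tubular neighborhood to construct a smooth vector field $X_F$ on $F$, compactly supported in $\intOP(F)$, whose time-$1$ flow sends $p$ to $q$. On every other face $G$ of $M$ with $\dim G = \dim F$, set $X_G := 0$. Since $X_F$ vanishes in a neighborhood of $\partial F$, the family $(X_H)_H$ over the faces of dimension $\dim F$ is compatible in the sense of Theorem~\ref{thm-B}, so the vector-field version Corollary~\ref{vecfield-cor} yields an extension $X \in \Vect_{\mathrm{str}}(M)$ with $X|_F = X_F$. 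Then $e^X \in \Gr(\scrV)$ sends $p$ to $q$, giving transitivity on $\intOP(F)$.

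For (II), I would exploit that a simple polytope is a smooth manifold with corners (Proposition~\ref{main-equivalence}). Around $x \in \hat F$ there are local coordinates $(y_1,\ldots,y_n)$ in which $M$ corresponds to a corner $\{y_{i_1}\geq 0,\ldots,y_{i_k}\geq 0\}$ and $\hat F$ to $\{y_{i_1}=0\}$. Choose a smooth cutoff $\chi$ with $\chi\equiv 1$ on a small neighborhood $V\ni x$ and supported in the chart, and set $Z := \chi\, y_{i_1}\, \partial/\partial y_{i_1}$, extended by zero outside the chart. Then $Z$ is tangent to every local facet $\{y_{i_j}=0\}$ through $x$, so $Z \in \Vect_{\mathrm{str}}(M) = \hat\scrV$; clearly $Z|_{V\cap\hat F}=0$; and a direct computation in the Euclidean covariant derivative (which by the remark after Theorem~\ref{thm-control} may be used without loss of generality) gives $\nabla_\nu Z(x) = \nu(x)_{i_1}\,\partial/\partial y_{i_1}\notin T_x\hat F$ whenever $\nu(x)\notin T_x\hat F$.

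I expect the main obstacle to lie in the transitivity in (I) on lower-dimensional faces, since this forces the extension of a vector field living on a single face to a stratified vector field on the ambient polytope. This is precisely what Corollary~\ref{vecfield-cor} provides, and the remark following Theorem~\ref{thm-B} shows that the simplicity of $M$ is essential for such an extension to exist; so the corollary crucially relies on the hypothesis that $M$ is simple.
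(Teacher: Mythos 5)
Your proposal is correct and follows the paper's approach: both apply Theorem~\ref{thm-control} with $\scrV = \Vect_{\mathrm{str}}(M)$, observing that $\hat\scrV = \scrV$ since $\Vect_{\mathrm{str}}(M)$ is a $C^\infty(M)$-module. The paper presents the corollary as an immediate specialization without spelling out hypotheses (I) and (II); your explicit verification --- using Corollary~\ref{vecfield-cor} to globalize a compactly supported vector field on a face for (I), and the local field $\chi\, y_{i_1}\partial_{y_{i_1}}$ in a corner chart for (II) --- is correct and supplies the details the paper leaves implicit.
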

\noindent
This result mirrors the result for a manifold with boundary, see \cite{Luk78} for a sketch,
with a detailed proof found in \cite{GS24}. Moreover as in \cite{GS24}, we have:
\begin{thm}\label{thm-identity-boundary}
For each simple polytope~$M$
of dimension $n\geq 1$,
the group $\Diff^{\partial,\id}(M)$ of diffeomorphisms that are the identity on the boundary are generated by the exponential image of vector fields $\Vect^{\partial=0}(M)$ vanishing on the boundary.
\end{thm}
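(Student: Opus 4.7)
The plan is to mirror the proof of Corollary~\ref{cor-connected-component}, which is built on Theorem~\ref{thm-control}, but restricting throughout to the subgroup $\Diff^{\partial,\id}(M)$ and its Lie algebra $\Vect^{\partial=0}(M)$. By Theorem~\ref{thm-A} with $\ell = n-1$ (together with Remark~\ref{post-thm-B}), $\Diff^{\partial,\id}(M)$ is a regular Fr\'echet--Lie subgroup of $\Diff^{\fr}(M)$ with Lie algebra $\Vect^{\partial=0}(M)$. Since every $X \in \Vect^{\partial=0}(M)$ vanishes on $\partial M$, its flow fixes $\partial M$ pointwise, so the inclusion $\Gr(\Vect^{\partial=0}(M)) \subseteq \Diff^{\partial,\id}(M)_0$ is automatic; the content of the theorem is the reverse inclusion together with the fact that $\Diff^{\partial,\id}(M)$ is connected.

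The first step will be to prove a relative version of Theorem~\ref{thm-control} adapted to the pair $(\Diff^{\partial,\id}(M)_0, \Vect^{\partial=0}(M))$: if $\scrU \subseteq \Vect^{\partial=0}(M)$ is a subspace for which $\Gr(\scrU)$ acts transitively on $\intOP(M)$, then $\Gr(\hat\scrU) = \Diff^{\partial,\id}(M)_0$. Condition (II) of Theorem~\ref{thm-control} becomes nearly automatic here, since every $Z \in \hat\scrU$ already vanishes identically on each facet, and the required non-tangency of $\nabla_\nu Z(x)$ is obtained by taking $Z = \psi\cdot X$ with $\psi$ a boundary-defining function and $X$ transverse to the facet. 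Running the fragmentation argument of Theorem~\ref{thm-control} inside $\Diff^{\partial,\id}(M)_0$ then yields the relative statement, and applying it with $\scrU = \Vect^{\partial=0}(M)$ completes the main inclusion: the transitivity of $\Gr(\Vect^{\partial=0}(M))$ on $\intOP(M)$ is standard, since any two interior points can be joined by the flow of a compactly supported vector field on $\intOP(M)$, which extends by zero to an element of $\Vect^{\partial=0}(M)$.

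The main obstacle, as in the parallel manifold-with-boundary treatment in \cite{GS24}, lies in the local step underlying the fragmentation near boundary points. In a straightening chart afforded by the manifold-with-corners structure (Proposition~\ref{main-equivalence}), a boundary-fixing diffeomorphism $\phi$ differs from the identity only in directions transverse to the local stratum, and one must exhibit it as a composition of exponentials of vector fields vanishing on $\partial M$. Near a vertex or edge this is more delicate than in the smooth-boundary case, since the vector field must simultaneously vanish on every facet meeting the chart; I would handle this via Hadamard-type factorization to extract a normal factor, combined with the extension operator of Theorem~\ref{thm-B} to glue the local data into a globally smooth element of $\Vect^{\partial=0}(M)$. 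Connectedness of $\Diff^{\partial,\id}(M)$ then follows from the same construction, since the resulting path of diffeomorphisms produced by the fragmentation joins any $\phi$ to the identity.
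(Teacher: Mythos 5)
Your proposal follows the same overall strategy as the paper's proof: mirror the manifold-with-boundary argument of \cite{GS24} by reducing to a local statement near each boundary point, factoring a boundary-fixing diffeomorphism via Hadamard near corners, and then globalizing by fragmentation. That part is sound.

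There are two places where your write-up diverges from (or falls short of) what the paper actually does, and where I would push back.

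First, you propose to package the argument as a ``relative version of Theorem~\ref{thm-control}'' for the pair $\bigl(\Diff^{\partial,\id}(M)_0,\Vect^{\partial=0}(M)\bigr)$. That is more scaffolding than is needed and, more importantly, it hides where the real work is. The content of any such relative statement is precisely a relative localization lemma (a $\Vect^{\partial=0}$ analogue of Lemma~\ref{lemma:loc}), and proving that is the whole point. The paper does not set up a relative theorem; it goes straight at the local step. It chooses the explicit coordinate vector fields $Z_j=y_j\partial_{y_j}$ for the facet directions and $X_j=\partial_{y_j}$ for the tangential directions, invokes the factorization already established in the proof of Lemma~\ref{lemma:loc} to write $\phi|_U=e^{f_{i+1}X_{i+1}}\circ\cdots\circ e^{f_nX_n}\circ e^{\hat g_1\hat Z_1}\circ\cdots\circ e^{\hat g_i\hat Z_i}$, and then argues that because $\phi$ fixes the boundary pointwise and the factors act in separate coordinate directions, each factor must also fix the boundary, forcing $\hat g_j=y_jg_j$ (so $\hat g_j\hat Z_j=g_jZ_j\in\Vect^{\partial=0}$) and $f_j|_{\cap_r\hat F_r}=0$. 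Your sketch stops at ``I would handle this via Hadamard-type factorization to extract a normal factor''; the paper's concrete decomposition is the piece that turns this from a plan into an argument, and your proposal never produces it.

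Second, the invocation of Theorem~\ref{thm-B} to ``glue the local data into a globally smooth element of $\Vect^{\partial=0}(M)$'' is misplaced. Theorem~\ref{thm-B} is an extension operator from compatible data on $\ell$-faces to the whole polytope; it plays no role in the globalization step here. The gluing of local factorizations is done by the fragmentation argument with a partition of unity subordinate to the covering by standard charts, exactly as in the proof of Theorem~\ref{thm-control}; the only extension result used in this part of the paper is the Seeley-type Lemma~\ref{lem:Seely}, and that enters one level down, inside the localization lemma, not in the globalization. You should drop the reference to Theorem~\ref{thm-B} and replace it with the fragmentation scheme already spelled out for Theorem~\ref{thm-control}.

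A smaller remark: in checking condition (II) you take $Z=\psi\cdot X$ with $X$ transverse to the facet at $x$; such an $X$ does not belong to $\Vect^{\partial=0}(M)$, so $\psi X$ need not lie in $\hat\scrU=\spn_{C^\infty(M)}\Vect^{\partial=0}(M)=\Vect^{\partial=0}(M)$ just because $\psi$ vanishes on $\partial M$. What you actually want is to observe that $\Vect^{\partial=0}(M)$ already contains, near each boundary point, a vector field of the local form $y_j\partial_{y_j}+\cdots$ (a stratified field vanishing on $\partial M$ with nonzero normal derivative), and these are exactly the $Z_j$'s the paper writes down. This is a fixable slip, but as stated the verification of (II) does not go through.

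So: the strategy is right, and you correctly identify the corner-point local step as the crux, but the proposal leaves that step as a sketch, uses Theorem~\ref{thm-B} where fragmentation (and Lemma~\ref{lem:Seely}) belong, and is a little careless in verifying that the required $Z$ lies in the $C^\infty(M)$-module generated by $\Vect^{\partial=0}(M)$.
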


The controllability arguments are local in nature, so we conjecture that these results carry over to well behaved manifolds with corners. A simple polytope which is a manifold with corners is automatically a so called manifolds with faces, \cite{Jae68} and cf. \Cref{rem:mf_faces}. Manifolds with faces require more boundary regularity and rule out problematic boundary intersections. Hence results from \Cref{thm-control} and \Cref{thm-identity-boundary} should carry over to the more general setting of manifolds with faces without problems. However, currently there is no complete description of diffeomorphism groups of manifolds with corners (or faces) as (infinite-dimensional) Lie groups. We refer to \cite{Glo23} for more information.
\section{Preliminaries}
All vector spaces we consider are vector spaces
over~$\R$.
Hausdorff, locally convex
topological vector spaces
will simply be called locally convex spaces
and their topology a locally convex vector topology.
If $E$ is a vector space and $S\sub E$,
we let $\Spann(S)$ denote the vector subspace of~$E$
spanned by~$S$.
If $X$ is a topological space and $Y\sub X$
a subset, we shall frequently say that a subset
$U\sub Y$ is \emph{open in $Y$}
or an \emph{open subset of~$Y$}
if $U$ is relatively open.
In the following, $\N$ denotes the set of positive integers,
while $\N_0:=\N\cup\{0\}$.\\[2.3mm]
We now compile basic concepts
and notation concerning
polytopes (cf.\ \cite{Bro83}).
\begin{numba}
A \emph{polytope}
in a finite-dimensional vector space~$E$
is the convex hull $\conv(S)$
of a non-empty finite subset $S\sub E$.
Each polytope is compact (see \cite[Theorem~7.1]{Bro83}).
If $M\sub E$ is a polytope,
we let $\aff(M)$ be the affine
subspace of~$E$ generated by~$M$.
If $n$ is the dimension of $\aff(M)$,
we call $n$ the \emph{dimension of~$M$}
and say that $M$ is an \emph{$n$-polytope}.
The \emph{algebraic interior}
$\algint(M)$ of~$M$ is its interior
as a subset of $\aff(M)$
(called the \emph{relative interior} in \cite{Bro83}).
The algebraic interior is dense in~$M$
and $\aff(M)=\aff(\algint(M))$
(cf.\ (c) and (f) in \cite[Theorem~3.4]{Bro83}).
A convex subset $F\sub M$ is called
a \emph{face} of~$M$ if
\[
(\forall x,y\in M)\, (\forall t\in \, ]0,1[)\;\;\,
tx+(1-t)y\in F\;\Rightarrow\; x,y\in F.
\]
Each non-empty face $F$ of~$M$
is a polytope and the number of faces of~$M$
is finite (see \cite[Theorem~7.3
and Corollary~7.4]{Bro83}).
If $N$ is a face of~$M$,
then a subset $F\sub N$ is a face of~$N$
if and only if it is a face of~$M$
(cf.\ \cite[Theorem~5.2]{Bro83}).
If $F$ is not empty and $d$ its dimension,
we call $F$ a \emph{$d$-face} of~$M$.
Points $x_0\in M$ such that $\{x_0\}$
is a face of~$M$ are called \emph{vertices};
$1$-faces of~$M$ are called \emph{edges} of~$M$.
If $M$ is an $n$-polytope with $n\geq 1$,
then its $(n-1)$-faces are called
\emph{facets} of~$M$.
Each $x\in M$ is contained in a smallest face $M(x)$
of~$M$, the intersection of all faces
containing $x$. If $d$ is its dimension, we call
\[
\ind_M(x):=n-d
\]
the \emph{index} of~$x$ in~$M$.
The sets $\algint(F)$
form a partition of~$M$
for $F$ in the set of non-empty faces of~$M$
(see \cite[Corollary~5.7]{Bro83}).
For a face $F$ of~$M$,
we have $F=M(x)$ if and only if $x\in \algint(F)$.
If $M$ is an $n$-polytope
and $i\in \{0,1,\ldots, n\}$,
we define $\partial_i(M):=\{x\in M\colon \ind_M(x)=i\}$.
\end{numba}
\begin{example}\label{ex:square}
For the square $S:=[0,1]^2$, we have
$\partial_0(S)=\;]0,1[^2$,
$\partial_1(S)=(]0,1[\;\times\{0,1\})\cup(\{0,1\}\times \;]0,1[)$
and $\partial_2(S)=\{0,1\}\times\{0,1\}$.
The vertices of $S$ are $(0,0)$, $(0,1)$, $(1,0)$ and $(0,1)$,
the edges are $\{0\}\times [0,1]$, $\{1\}\times[0,1]$,
$[0,1]\times\{0\}$ and $[0,1]\times\{1\}$
and these are also the four faces of~$S$.
\end{example}
\noindent
We use $C^m$-maps
between open subsets of
locally convex spaces
in the sense of Bastiani~\cite{Bas}
and the corresponding $C^\infty$-manifolds
and Lie groups
(see \cite{Res,GN25,Nee06,Sch23}
for further information;
cf.\ also \cite{Ham82,Mic80,Mil84,Nee06,Sch23}).
Thus, manifolds
and Lie groups
are modeled
on locally convex spaces
which can be infinite dimensional,
unless the contrary is stated.
For
the basic definition,
consider
locally
convex spaces $E$ and $F$, an open subset $U\sub E$
and a mapping $f\colon U\to F$.
We write
\[
D_yf(x):=\frac{d}{dt}\Big|_{t=0}f(x+ty)=\lim_{t\to0}\frac{1}{t}(f(x+ty)-f(x))
\]
for the directional derivative of
$f$ at $x\in U$ in the direction $y\in E$,
if it exists.
If $f$ is continuous,
we also say that $f$ is $C^0$
and write $d^0f:=f$.
\begin{numba}
Given $m\in \N\cup\{\infty\}$,
a function $f\colon U\to F$
is called a \emph{$C^m$-map}
if it is continuous
and, for each $k\in \N$,
the iterated directional derivative
\[
d^kf(x,y_1,\ldots, y_k):=(D_{y_k}\cdots D_{y_1}f)(x)
\]
exists for each $k\in \N$ with $k\leq m$
and all $x\in U$ and $y_1,\ldots, x_k\in E$,
and the mappings
\[
d^kf\colon U\times E^k\to F
\]
so obtained are continuous.
We abbreviate $df:=d^1f$.
As usual, $C^\infty$-maps
are also called \emph{smooth}.
\end{numba}
\begin{numba}
If $G$ is a Lie group modeled
on a locally convex space,
with neutral element~$e$,
we write $L(G):=T_eG$
for its Lie algebra.
If $\alpha\colon G\to H$
is a smooth group homomorphism
between Lie groups, we let $L(\alpha):=T_e(\alpha)\colon L(G)\to L(H)$
be the associated continuous Lie algebra
homomorphism.
\end{numba}
\begin{numba}
If $G$ is a Lie group,
then for each $v\in L(G)$
there is at most one smooth group
homomorphism $\gamma_v\colon (\R,+)\to G$
such that
$\dot{\gamma}_v(0)=v$.
If $\gamma_v$ exists for each $v\in L(G)$,
then
$\exp_G\colon L(G)\to G$, $v\mto\gamma_v(1)$
is called the exponential function.
% of~$G$.
\end{numba}
\begin{numba}
If $M$ is a smooth manifold modeled
on a locally convex space~$E$
and $F$ a closed vector subspace of~$E$,
then a subset $N\sub M$ is called
a \emph{submanifold} of~$M$
modeled on~$F$ if, for each $x\in N$,
there exists a chart $\phi\colon U\to V\sub E$
of~$M$ with $x\in U$ such that $\phi(U\cap N)=V\cap F$.
Endowing~$N$ with the induced topology,
the maximal $C^\infty$-atlas containing
the restrictions $\phi|_{U\cap N}\colon U \cap N\to V\cap F$
makes~$N$ a smooth manifold modeled on~$F$.
\end{numba}
\begin{numba}
Let $M$ and $N$ be smooth manifolds
modeled on locally convex spaces~$E$ and $F$,
respectively.
A smooth function $q\colon M\to N$
is called a \emph{smooth submersion}
if, for each $x\in M$,
there exist charts $\phi\colon U_\phi\to V_\phi\sub E$
of~$M$
and $\psi\colon U_\psi\to V_\psi\sub F$
of~$N$
and a continuous linear mapping $\alpha\colon E\to F$
admitting a continuous linear right inverse
such that
$x\in U_\phi$, $y\in U_\psi$, $f(U_\phi)\sub U_\psi$,
and $\psi\circ f|_{U_\phi}\circ \phi^{-1}=\alpha|_{V_\phi}$.
\end{numba}
\noindent
We frequently
need to consider differentiable functions
on non-open sets, like relatively
open subsets of $[0,\infty[^n$.
We follow an approach described
in \cite{GN25}.
\begin{numba}
A subset $U$ of $\R^n$ (or a locally convex space~$E$)
is called \emph{locally convex}
if each $x\in U$ has a relatively open
neighbourhood $V$ in~$U$
which is convex. Then each $x$-neighbourhood in~$U$
contains such a neighbourhood.
As a consequence, relatively open subsets
of locally convex sets are locally convex.
Each convex subset $U$ of~$E$
is locally convex.
\end{numba}
\begin{numba}
Let $E$ and $F$ be locally
convex spaces, $U\sub E$ be a locally convex subset
with dense interior $\mathring{U}$ and
$m\in \N_0\cup\{\infty\}$.
A function $f\colon U\to F$ is called $C^m$
if $f$ is continuous, the restriction
$f|_{\mathring{U}}$ is $C^m$ and the iterated directional
derivatives $d^k(f|_{\mathring{U}})\colon \mathring{U}\times E^k\to F$
have continuous extensions $d^kf\colon U\times E^k\to F$
for all $k\in \N$ such that $k\leq m$
(see \cite[Definition 1.4.4 and Lemma~1.4.5]{GN25}).
Again, $C^\infty$-maps are also called \emph{smooth}.
\end{numba}
\begin{numba}
The set $C^m(U,F)$
of all $C^m$-functions $f\colon U\to F$
is a vector space with pointwise operations.
We endow $C^m(U,F)$ with the
compact-open $C^m$-topology, i.e.,
the initial topology with respect to the linear mappings
\[
C^m(U,F)\to C(U\times E^k,F),\quad f\mto d^kf
\]
for $k\in \N_0$ with $k\leq m$,
using the compact-open topology
on the vector space $C(U\times E^k,F)$
of all continuous functions $U\times E^k\to F$
(see \cite[Definition 1.7.10]{GN25}).
\end{numba}
\noindent
The following fact (see \cite[Proposition 1.7.11]{GN25})
will be used repeatedly.
\begin{numba}\label{basicpull}
Let $E_1$, $E_2$, and $F$ be locally
convex spaces, $U_1\sub E_1$ and $U_2\sub E_2$
be locally convex subsets with dense interior,
$m\in \N_0\cup\{\infty\}$
and $\psi\colon U_1\to U_2$ be a $C^m$-map.
Then
\[
\psi^*\colon C^m(U_2,F)\to C^m(U_1,F),\quad f\mto f\circ\psi
\]
is a continuous linear map.
\end{numba}
\begin{numba}\label{spec}
If $X$ is a topological space and $W\sub X$ a subset
with dense interior, then each relatively
open subset of~$W$ has dense interior.
Notably,
if $A$ is a convex subset of~$\R^n$
with non-empty interior
(e.g., $A=[0,\infty[^n$),
then each relatively open subset $V$ of $A$
is a locally convex subset of~$\R^n$ with dense
interior,
enabling us to speak about $C^m$-functions on~$V$.
\end{numba}
\begin{rem}
If the convex set $A\sub \R^n$ with non-empty interior
is closed in~$\R^n$,
then alternative characterizations
are possible for $C^m$-maps from a relatively open subset $V\sub A$
to a locally
convex space~$F$
(which shall not be used in the following,
but link our framework to
other popular approaches):\\[2.3mm]
Write $V=U\cap A$ for
an open subset $U$ of~$\R^n$.
If $m\in \N_0$ or
$F$ is metrizable or $F$ is sequentially
complete or $A=[0,\infty[^n$,
then a function $f\colon V\to F$
is $C^m$ in the above sense
if and only if it extends to a $C^m$-function
$U\to F$
(see \cite[Korollar 3.2]{Jak23},
\cite[Korollar 3.10]{Jak23},
\cite[Theorem 1.10\,(b)]{Glo22},
and \cite[Theorem 1.10\,(c)]{Glo22}, respectively).
If $m\in \N_0$ or $F$ is sequentially
complete or $A=[0,\infty[^n$,
then the references actually provide
a continuous linear
extension operator $C^m(V,F)\to C^m(U,F)$,
exploiting suitable
versions of Seeley's extension theorem (as
in \cite{Han23}) and Whitney's
extension theorem (as in \cite{RS21,Jak23}).
For finite-dimensional~$F$,
this is classical (cf.\ \cite{Whi34}
and \cite[Theorem~2.3]{Bie80}).
\end{rem}
\section{Extension of compatible {\boldmath$C^\infty$}-functions
to half-open cubes}\label{sec-loc}
We prepare the proof
of Theorem~\ref{thm-B}.
\begin{numba}\label{setth}
Given $i\in \N$
and $k\in \{1,\ldots, i\}$,
we consider the subset
\[
F_{i,k}:=\{x\in [0,1[^i\colon x_k=0\}
\]
of $[0,1[^i$, writing $x=(x_1,\ldots, x_i)$.
Let $Q$ be a locally convex subset with dense
interior of a locally convex space~$Z$
and $m\in \N_0\cup\{\infty\}$.
Since $F_{i,k}\times Q$ is
a locally convex set with non-empty interior
in the vector space $\{x\in \R^i\colon x_k=0\}\times Z$,
we can speak about $C^m$-functions
on $F_{i,k}\times Q$.
Given a locally convex topological
vector space~$Y$,
consider the closed vector subspace
$E_i$ of
\[
\prod_{k=1}^iC^m(F_{i,k}\times Q,Y)
\]
containing all $(f_1,\ldots, f_i)\in \prod_{k=1}^iC^m(F_{i,k}\times Q,Y)$
such that
\[
f_k(x,q)=f_\ell(x,q)
\]
for all $k,\ell\in\{1,\ldots, i\}$,
$x\in F_{i,k}\cap F_{i,\ell}$, and $q\in Q$.
We give $C^m(F_{i,k}\times Q,Y)$ the compact-open $C^m$-topology
for each $k$, as well as $C^m([0,1[^i\times Q,Y)$.
On $E_i$, we consider
the topology induced by the product.
Note that the continuous linear mapping
\[
\rho_i\colon C^m([0,1[^i\times Q,Y)\to \prod_{k=1}^i C^m(F_{i,k}\times Q,Y),\quad
f\mto (f|_{F_{n,k}\times Q})_{k=1}^i
\]
takes values in $E_i$;
we use the same symbol, $\rho_i$,
for its co-restriction to a map $C^m([0,1[^i\times Q,Y)\to E_i$.
\end{numba}
\noindent
We now describe a continuous linear right inverse
$\Phi_i\colon E_i\to C^m([0,1[^i\times Q,Y)$
for the map $\rho_i\colon C^m([0,1[^i\times Q,Y)\to E_i$.
\begin{numba}\label{reunot}
It would be enough for us to consider,
for $n\in \N$ with $n\geq i$,
the case that $Z:=\R^{n-i}$
and $Q:=\;]{-1},1[^{n-i}$
(with $\;]{-1},1[^0:=\{0\}$). We also write
\[
E_{n,i}:=E_n\sub \prod_{k=1}^i C^m(F_{i,k}\times \;]{-1},1[^{n-i},Y)
\]
in this case, $\Phi_{n,i}:=\Phi_i\colon E_{n,i}\to C^m([0,1[^i\times \;]{-1},1[^{n-i},Y)$
and
\[
\rho_{n,i}:=\rho_i\colon C^m([0,1[^i\times \;]{-1},1[^{n-i},Y)\to E_{n,i}.
\]
If $Q$ is a singleton, we may omit $Q$
and constant variables in $Q$
in the notation.
% when convenient.
\end{numba}
\begin{numba}
For $j\in \{1,\ldots, i\}$,
we write $[i,j]$ for the set of all
subsets $S\sub \{1,\ldots,i\}$
having exactly~$j$ elements.
For $S\in [i,j]$,
abbreviate $k(S):=\min(S)$.
We let
\[
\theta_{i,S}\colon [0,1[^i\times Q\to F_{i,k(S)}\times Q
\]
be the mapping taking $(x_1,\ldots, x_i,q)\in [0,1[^i\times Q$
to the element $(y_1,\ldots,y_i,q)\in F_{i,k(S)}\times Q$
with components
\[
y_\ell=\left\{
\begin{array}{cl}
0 & \mbox{\,if $\,\ell\in S$;}\\
x_\ell & \mbox{\,if $\,\ell\not\in S$}
\end{array}\right.
\]
for $\ell\in \{1,\ldots, i\}$.
Then $\theta_{i,S}$
is the restriction of a continuous linear mapping $\R^i\times Z\to\R^i\times Z$
and thus~$C^\infty$.
% smooth
By \ref{basicpull},
the linear map
\[
(\theta_{i,S})^*\colon C^m(F_{i,k(S)}\times Q,Y)\to C^m([0,1[^i\times Q,Y),\quad
f\mto f\circ \theta_{i,S}
\]
is continuous. Hence
$\Psi_i\colon \!\prod_{k=1}^i C^m(F_{i,k}\times Q,Y)\to C^m([0,1[^i\times Q,Y)$,
\[
(f_k)_{k=1}^i\mto
\sum_{j=1}^i(-1)^{j-1}\!\!\sum_{S\in [i,j]}\! f_{k(S)}\circ \, \theta_{i,S}
\]
is a continuous linear map, and so is its restriction
\begin{equation}\label{thePhi}
\Phi_i:=\Psi_i|_{E_i}\colon E_i\to C^m([0,1[^i\times Q,Y).
\end{equation}
Thus, for all $(f_1,\ldots, f_i)\in E_i$ and
$x=(x_1,\ldots, x_i,q)\in [0,1[^i\times Q$, we have
\begin{equation}\label{evaluated}
\Phi_i(f_1,\ldots, f_i)(x,q)=
\sum_{j=1}^i(-1)^{j-1}\!\!\sum_{S\in [i,j]}\! f_{k(S)}(\theta_{i,S}(x,q)).
\end{equation}
\end{numba}
\begin{rem}
Explicitly, for $Q=Z=\{0\}$ we get
\begin{eqnarray}
\Phi_1(f_1)(x_1)&=&f_1(0),\label{easiest}\\
\Phi_2(f_1,f_2)(x_1,x_2)&=& f_1(0,x_2)+f_2(x_1,0)-f_1(0,0),\notag\\
\Phi_3(f_1,f_2,f_3)(x_1,x_2,x_3)&=&
f_2(0,x_2,x_3)+f_1(x_1,0,x_3)+f_1(x_1,x_2,0)\notag\\
& & - f_1(0,0,x_3)-f_1(0,x_2,0)-f_2(x_1,0,0)\notag\\
& &+f_1(0,0,0)\notag
\end{eqnarray}
and
\begin{eqnarray*}
\lefteqn{\Phi_4(f_1,f_2,f_3,f_4)(x_1,x_2,x_3,x_4)}\\
\!\!\!\!\!\!\!&=& f_1(0,x_2,x_3,x_4)+ f_2(x_1,0,x_3,x_4)+f_3(x_1,x_2,0,x_4)
+ f_4(x_1,x_2,x_3,0)\\
\!\!\!\!\!\!\!&&
- f_1(0,0,x_3,x_4) - f_1(0,x_2,0,x_4)-f_1(0,x_2,x_3,0)
-f_2(x_2,0,0,x_4)\\
\!\!\!\!\!\!\! && \quad -f_2(x_1,0,x_3,0)
-f_3(x_1,x_2,0,0)\\
&&
+ f_1(0,0,0,x_4) + f_1(0,0,x_3,0) + f_1(0,x_2,0,0)+f_2(x_1,0,0,0)\\
&&- f_1(0,0,0,0)
\end{eqnarray*}
in the cases $i\in \{1,2,3,4\}$, for
$(f_k)_{k=1}^i\in E_i$ and $(x_1,\ldots,x_i)\in [0,1[^i$.
For arbitrary $Z$ and $Q$, add a parameter $q\in Q$ in each term.
\end{rem}
\begin{prop}\label{local-ext-vertex}
The continuous linear map
$\Phi_i\colon E_i\to C^m([0,1[^i\times Q,Y)$
defined in {\rm(\ref{thePhi})}
is a right inverse for
$\rho_i\colon C^m([0,1[^i\times Q,Y)\to E_i$, $f\mto (f|_{F_{i,k}})_{k=1}^i$.
\end{prop}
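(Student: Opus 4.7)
The plan is to verify the right-inverse identity pointwise by a straightforward inclusion–exclusion cancellation that exploits the compatibility condition built into $E_i$. Fix $(f_1,\ldots,f_i) \in E_i$ and $k \in \{1,\ldots,i\}$, and let $(x,q) \in F_{i,k} \times Q$, so that $x_k = 0$. I want to show that the $k$-th component of $\rho_i(\Phi_i(f_1,\ldots,f_i))$ at $(x,q)$ equals $f_k(x,q)$; by (\ref{evaluated}) this amounts to the identity
\[
\sum_{j=1}^i (-1)^{j-1}\!\!\sum_{S\in[i,j]} f_{k(S)}(\theta_{i,S}(x,q)) \;=\; f_k(x,q).
\]

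The first step is to observe that, for every non-empty $S \subseteq \{1,\ldots,i\}$, the image point $y := \theta_{i,S}(x,q)$ satisfies both $y_{k(S)} = 0$ (since $k(S) \in S$) and $y_k = 0$ (since $x_k = 0$ already). Hence $y \in F_{i,k(S)} \cap F_{i,k}$, and the compatibility condition defining $E_i$ gives $f_{k(S)}(y,q) = f_k(y,q)$. The left-hand sum therefore collapses to
\[
\sum_{j=1}^i (-1)^{j-1}\!\!\sum_{S\in[i,j]} f_k(\theta_{i,S}(x,q)).
\]

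The second step exploits that $\theta_{i,S}(x,q)$ depends only on $S \setminus \{k\}$ when $x_k = 0$, because setting the $k$-th coordinate to zero is a no-op. I would then pair each non-empty $S$ not containing $k$ with $S \cup \{k\}$: both map to the same point under $\theta_{i,\cdot}(x,q)$, but the corresponding signs $(-1)^{|S|-1}$ and $(-1)^{|S|}$ are opposite, so the two terms cancel. The only $S$ left unpaired is $S = \{k\}$ (its would-be partner $S \setminus \{k\} = \emptyset$ does not appear in the sum since $j \geq 1$), and $\theta_{i,\{k\}}(x,q) = (x,q)$ because $x_k$ is already $0$. Thus the only surviving contribution is $f_k(x,q)$, as required.

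I do not expect a genuine obstacle here: the formula for $\Phi_i$ is the standard inclusion–exclusion formula for a boundary, and the compatibility condition defining $E_i$ is precisely what is needed to reduce all the $f_{k(S)}$'s appearing in the restriction to a single $f_k$. The only point requiring a little care is the bookkeeping for the unpaired term $S = \{k\}$ (corresponding to $T := S \setminus \{k\} = \emptyset$), which is why the identity comes out to $f_k(x,q)$ rather than $0$.
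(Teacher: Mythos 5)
Your proposal is correct, and it takes a genuinely different route from the paper's proof. The paper proceeds by induction on $i$, first establishing the recursion
\[
\Phi_i(f_1,\ldots,f_i)(x_1,\ldots,x_i,q)=\Phi_{i-1}\bigl(f_1(\cdot,x_i,\cdot),\ldots\bigr)(x_1,\ldots,x_{i-1},q)+f_i(x_1,\ldots,x_{i-1},0,q)-\Phi_{i-1}\bigl(f_1(\cdot,0,\cdot),\ldots\bigr)(x_1,\ldots,x_{i-1},q),
\]
and then tracking restrictions through this recursion. You instead verify the identity directly: on $F_{i,k}\times Q$ every point $\theta_{i,S}(x,q)$ lands in $(F_{i,k}\cap F_{i,k(S)})\times Q$, so the compatibility condition lets you replace each $f_{k(S)}$ by $f_k$, and then the involution $S \leftrightarrow S\cup\{k\}$ (which leaves $\theta_{i,S}(x,q)$ fixed because $x_k=0$) is sign-reversing with unique unpaired term $S=\{k\}$, yielding exactly $f_k(x,q)$. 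Your argument is shorter and makes the combinatorial mechanism behind the inclusion--exclusion formula more transparent; the paper's induction has the side benefit of isolating the recursion formula, which is not needed for the right-inverse claim itself. One small notational nit: when you write ``$y \in F_{i,k(S)}\cap F_{i,k}$'' you mean the $\R^i$-component of $\theta_{i,S}(x,q)$; keeping the $Q$-coordinate separate as in the paper's own compatibility condition would make the appeal to the definition of $E_i$ cleaner.
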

\begin{proof}
If $i\geq 2$ and $k\in \{1,\ldots, i-1\}$,
then
\[
\eta_{k,x_i}(x_1,\ldots, x_{i-1},q):=(x_1,\ldots, x_i,q)\in F_{i,k}\times Q
\]
for
all $(x_1,\ldots,x_{i-1},q)\in F_{i-1,k}\times Q$
and $x_i\in [0,1[$,
furnishing a smooth map $\eta_{k,x_i}\colon F_{i-1,k}\times Q\to F_{i,k}\times Q$.
For $k$ and $x_i$ as before,
we therefore obtain a $C^m$-function
\[
f(\cdot,x_i,\cdot):=f\circ \eta_{k,x_i}\in C^m(F_{i-1,k}\times Q,Y),\quad
(x_1,\ldots,x_{i-1},q)\mto f(x_1,\ldots, x_i,q)
\]
for all $f\in C^m(F_{i,k}\times Q,Y)$.
Evaluating for $k\not=\ell$
at elements $(x_1,\ldots,x_{i-1},q)\in (F_{n-1,k}\cap F_{i-1,\ell})\times Q$,
we check that
\[
(f_1(\cdot,x_i,\cdot),\ldots,f_{n-1}(\cdot,x_i,\cdot))\in E_{i-1}
\quad\mbox{for all $\,(f_1,\ldots, f_i)\in E_i$.}
\]
We now prove the assertion of the proposition
by induction on $i\in \N$,
and that
\begin{eqnarray}
\lefteqn{\Phi_i(f_1,\ldots, f_i)(x_1,\ldots,x_i,q)}\notag\\
&=&\Phi_{i-1}(f_1(\cdot,x_i,\cdot),\ldots, f_{i-1}(\cdot,x_i,\cdot))(x_1,\ldots, x_{i-1},q)
+f_i(x_1,\ldots, x_{i-1},0,q)\notag\\
&& -\Phi_{i-1}(f_1(\cdot,0,\cdot),\ldots, f_{i-1}(\cdot,0,\cdot))(x_1,\ldots, x_{i-1},q)
\label{for-induc}
\end{eqnarray}
holds if $i\geq 2$, for all $(f_1,\ldots, f_i)\in E_i$
and $(x_1,\ldots, x_i,q)\in [0,1[^i\times Q$.\\[2.3mm]
The case $i=1$: Then $F_{1,1}=\{0\}$.
For each $f_1\in E_1=C^m(\{0\}\times Q,Y)$,
the map $\Phi_1(f_1)$ is given by
\[
\Phi_1(f_1)(x_1,q)=f_1(0,q)\quad\mbox{for all $x_1\in [0,1[$ and $q\in Q$,}
\]
(cf.\ (\ref{easiest})),
whence $\Phi_1(f_1)(0,q)=f_1(0,q)$.
As a consequence, $\Phi_1(f_1)|_{F_{1,1}\times Q}=f_1$
and hence $\rho_1(\Phi_1(f_1))=f_1$.\\[2.3mm]
Let $i\geq 2$ now and assume
the assertion holds for $i-1$ in place of~$i$.
It suffices to show~(\ref{for-induc}).
In fact, if (\ref{for-induc})
holds, we infer that
\[
\Phi_i(f_1,\ldots,f_i)|_{F_{i,k}\times Q}=f_k
\]
for all $(f_1,\ldots, f_i)\in E_i$
and $k\in \{1,\ldots, i\}$ (whence $\rho_i\circ \Phi_i=\id_{E_i}$),
as follows: Let $(x,q)=(x_1,\ldots, x_i,q)\in F_{i,k}\times Q$.
If $k=i$, then $x=(x_1,\ldots, x_{i-1},0)$,
whence the first and final summand in~(\ref{for-induc})
cancel and we get
\[
\Phi_i(f_1,\ldots, f_i)(x,q)=f_i(x_1,\ldots,x_{i-1},0,q)=f_i(x,q).
\]
If $k<i$, then
$(x_1,\ldots, x_{i-1})\in F_{i-1,k}$, whence the first
summand in (\ref{for-induc}) is
\begin{eqnarray}
\lefteqn{\Phi_{i-1}(f_1(\cdot,x_i,\cdot),\ldots, f_{i-1}(\cdot,x_i,\cdot))
(x_1,\ldots, x_{i-1},q)}
\qquad\quad\notag\\
&=&f_k(\cdot,x_i,\cdot)(x_1,\ldots, x_{i-1},q)
= f_k(x_1,\ldots,x_i,q)=f_k(x,q).\label{to-repeat}
\end{eqnarray}
Since
$(x_1,\ldots, x_{i-1},0)\in F_{i,k}\cap F_{i,i}$,
we have
\begin{equation}\label{end-zero}
f_k(x_1,\ldots, x_{i-1},0,q)=f_i(x_1,\ldots, x_{i-1},0,q),
\end{equation}
by definition of~$E_i$.
Repeating the calculation of (\ref{to-repeat})
with $(x_1,\ldots, x_{i-1},0)$ in place of~$x$,
we see that the final summand in
(\ref{for-induc}) is given by
\begin{eqnarray*}
\lefteqn{-\Phi_{i-1}(f_1(\cdot,0,\cdot),\ldots, f_{i-1}(\cdot,0,\cdot))(x_1,\ldots, x_{i-1},q)}\qquad\quad
\\
&=& -f_k(x_1,\ldots,x_{i-1},0,q)=-f_i(x_1,\ldots, x_{i-1},0,q),
\end{eqnarray*}
using (\ref{end-zero}) for the final equality.
The final summand in~(\ref{for-induc}) therefore cancels the penultimate
summand; using (\ref{to-repeat}) to re-write the first
summand, we obtain
\[
\Phi_i(f_1,\ldots, f_i)(x,q)=\Phi_{i-1}(f_1(\cdot,x_i,\cdot),\ldots, f_{i-1}
(\cdot, x_i,\cdot))(x_1,\ldots, x_{i-1},q)=f_k(x,q).
\]
Thus $\Phi_i(f_1,\ldots, f_i)|_{F_{i,k}\times Q}=f_k$ also for $k<i$.\\[2.3mm]
To get~(\ref{for-induc}), write $\theta_{i-1,S}$
for the map $C^m([0,1[^{i-1}\times Q,Y)\to C^m(F_{i-1,k(S)}\times Q,Y)$
for subsets $S\sub \{1,\ldots,i-1\}$
which is obtained by replacing $i$ with $i-1$
in the definition of~$\theta_{i,S}$.
Let $x=(x_1,\ldots, x_i)\in [0,1[^i$
and $q\in Q$.
For $j=1$ and $S=\{i\}$,
the summand in (\ref{evaluated}) is
\[
(-1)^{j-1}f_{k(S)}(\theta_{i,S}(x,q))=f_i(x_1,\ldots,x_{i-1},0,q)
\]
and hence equals the second summand in (\ref{for-induc}).\\[2.3mm]
For $j\in \{1,\ldots,i-1\}$ and $S\in [i,j]$
with $i\not\in S$, we have $S\in [i-1,j]$
and the corresponding summand in (\ref{evaluated})
is
\[
(-1)^{j-1}f_{k(S)}(\theta_{i,S}(x,q))=(-1)^{j-1}
(f_{k(S)}(\cdot,x_i,\cdot)\circ\theta_{i-1,S})(x_1,\ldots, x_{i-1},q).
\]
The sum of these terms over all $(j,S)$
yields
\[
\Phi_{i-1}(f_1(\cdot,x_i,\cdot),\ldots, f_i(\cdot,x_i,\cdot))(x_1,\ldots,x_{i-1},q),
\]
which is the first summand in~(\ref{for-induc}).\\[2.3mm]
For $j\in \{2,\ldots, i\}$,
the sets $S\in [i,j]$ with $i\in S$
are in bijection with sets $S'\in [i-1,j-1]$
via $S\mto S':=S\setminus\{i\}$, $S'\mto S:=S'\cup\{i\}$.
We have $k(S)=\min S=\min S'=k(S')$.
The summand in (\ref{evaluated}) corresponding~to~$(j,S)$~is
\begin{eqnarray}
\lefteqn{(-1)^{j-1}f_{k(S)}(\theta_{i,S}(x_1,\ldots, x_i,q))}\qquad\qquad\notag\\
& = &  -(-1)^{(j-1)-1}f_{k(S')}(\cdot,0,\cdot)(\theta_{i-1,S'}(x_1,\ldots,x_{i-1},q)).
\label{to-be-summed}
\end{eqnarray}
This is the negative of the summand for $(j-1,S')$
in place of $(j,S)$ in the formula for
\[
\Phi_{i-1}(f_1(\cdot,0,\cdot),\ldots,f_{i-1}(\cdot,0,\cdot))(x_1,\ldots,x_{i-1},q)
\]
analogous to~(\ref{evaluated}).
The sum of the term in (\ref{to-be-summed})
over all $(j,S)$ therefore equals
\[
-\Phi_{i-1}(f_1(\cdot,0,\cdot),\ldots,f_{i-1}(\cdot,0,\cdot))(x_1,\ldots,x_{i-1},q),
\]
the final summand in~(\ref{for-induc}).
Thus~(\ref{for-induc}) holds, which completes the proof.
\end{proof}
\begin{rem}\label{rem-in-span}
The formula (\ref{evaluated})
implies that
\begin{equation}\label{in-span}
\Phi_i(f)([0,1[^i\times Q)\sub \Spann \bigcup_{k=1}^i f_k(F_{i,k}\times Q)
\end{equation}
for each
$f=(f_1,\ldots, f_i)\in E_i$,
in the situation of Proposition~\ref{local-ext-vertex}.
\end{rem}
\section{Polytopes which are manifolds with corners}
We show that a polytope is simple
if and only if it can be regarded as a manifold
with corners.
First, we recall the classical concept
of a simple polytope.
\begin{defn}\label{defn-sim}
Let $n\in \N_0$. An $n$-polytope~$M$
is called \emph{simple} if
the following
equivalent conditions are
satisfied:
\begin{itemize}
\item[(a)]
Each vertex of~$M$ is contained in exactly
$n$ edges of~$M$.
\item[(b)]
Each vertex of~$M$ is contained in exactly
$n$ facets of~$M$.
\item[(c)]
For each $k\in \{0,\ldots, n-1\}$
and $k$-face $F$ of~$M$,
the number of facets of~$M$
containing~$F$ equals $n-k$.
\end{itemize}
\end{defn}
\begin{rem}
Condition~(c) is used as the definition in \cite[p.\,80]{Bro83};
for the equivalences, see \cite[Theorems~12.11 and 12.12]{Bro83}.
A vertex of any $n$-polytope
is contained in at least $n$ edges of~$M$
(see \cite[Theorem~10.5]{Bro83}).
\end{rem}
\begin{examples}\label{exa-simp}
(a) For each $n\in \N$, the cube $[0,1]^n$ is a
simple $n$-polytope.\smallskip

\noindent
(b) Each $2$-polytope in $\R^2$
is simple, and likewise each $1$-polytope in~$\R$.\smallskip

\noindent
(c) The tetrahedron
is simple.
More generally, each simplex is simple.\smallskip

\noindent
(d) The dodecahedron is simple.\smallskip

\noindent
(e) If $M$ and $N$ are simple polytopes,
then also $M\times N$ is simple.\footnote{If $M$ is an $m$-polytope
and $N$ an $n$-polytope, then $M\times N$
has dimension $m+n$. Each vertex of $M\times N$
is of the form $(x_0,y_0)$ with vertices $x_0$ of~$M$
and $y_0$ of~$N$.
The edges of $M\times N$
containing $(x_0,y_0)$ are $\{x_0\}\times F$
with $F$ an edge of~$N$ containing~$y_0$,
and $E\times \{y_0\}$ with $E$ an edge of~$M$
containing $x_0$. There are $m+n$ of these.}\smallskip

\noindent
(f) Each non-empty face of a simple polytope is simple \cite[Theorem~12.15]{Bro83}.\smallskip

\noindent
(g) Chopping off a vertex of a simple
polytope, the resulting truncated polytope
is simple (see \cite[Theorem~12.18]{Bro83}).
\end{examples}
\begin{rem}
The icosahedron is not simple.
If $A\sub \R^2$ is a $2$-polytope
with $\geq 4$
vertices, then a pyramid in~$\R^3$
with base~$A$ is a non-simple $3$-polytope.
\end{rem}
\begin{numba}
Each $n$-polytope
$M\sub\R^n$ is a locally convex subset of
$\R^n$ with dense interior, whence
each relatively open subset $U\sub M$
is a locally convex subset of $\R^n$ with dense
interior. For a relatively open subset $V$
in $[0,\infty[^n$ (or in $[0,\infty[^i\times\R^{n-i}$
for some $i\in \{0,\ldots,n\}$),
we can therefore call a map
$\phi\colon U\to V$ a $C^\infty$-diffeomorphism
if it is smooth to $\R^n$
and invertible with smooth inverse~$\phi^{-1}$.
\end{numba}
\begin{defn}\label{defnculi}
We say that an $n$-polytope
$M\sub\R^n$ is \emph{cube-like}
if $M$
is covered by the domains $U$ of $C^\infty$-diffeomorphisms
$\phi\colon U\to V$
from relatively open subsets $U\sub M$ onto
relatively open subsets $V\sub [0,\infty[^n$
(writing $[0,\infty[^0:=\{0\}$).
\end{defn}

More generally,
we shall call an $n$-polytope $P\sub E$
in a finite-dimensional vector space~$E$
cube-like if there exists an injective
affine map $f\colon \R^n\to E$ with $f(\R^n)=\aff(P)$
such that $f^{-1}(P)$ is a cube-like $n$-polytope in $\R^n$.
\begin{numba}\label{poly-rough}
Each cube-like $n$-polytope $M$ in $\R^n$
becomes an $n$-dimensional smooth manifold with corners
(in the sense recalled in Definition~\ref{mfd-corners})
if we endow it with the
maximal cornered $C^\infty$-atlas $\cA$
containing each $\phi$ as in Definition~\ref{defnculi}.
If $P$ is an $n$-polytope in a finite-dimensional
real vector space~$E$, we choose an affine
bijection $f\colon \R^n\to \aff(P)$
and transport the structure of
smooth manifold with corners from
$f^{-1}(P)$ to~$P$. Given a locally convex space~$Y$
and $m\in \N_0\cup\{\infty\}$,
we consider the $C^\infty$-diffeomorphism
$\psi\colon f^{-1}(P)\to P$, $x\mto f(x)$ 
and give $C^m(P,Y)$ the locally convex vector
topology making the bijective linear map
\[
\psi^*\colon C^m(P,Y)\to C^m(f^{-1}(P),Y),\quad g\mto g\circ\psi
\]
an isomorphism of topological vector spaces. As a consequence
of \ref{basicpull}, it is independent of the choice of~$f$.
\end{numba}
\noindent
For the reader's convenience, we recall (cf.\
\cite{Cer61, Dou61, Mic80}):
\begin{defn}\label{mfd-corners}
Let $n\in \N_0$.
An $n$-dimensional smooth manifold with corners
is a Hausdorff topological space~$M$,
together with a maximal set $\cA$
of homeomorphisms $\phi\colon U_\phi\to V_\phi$
from open subsets $U\sub M$ onto relatively open
subsets $V\sub [0,\infty[^n$,
such that $\bigcup_{\phi\in \cA}U_\phi=M$ holds
and the transition maps
$\phi\circ \psi^{-1}\colon \psi(U_\phi\cap U_\psi)\to
\phi(U_\phi\cap U_\psi)$ are smooth
for all $\phi,\psi\in \cA$.
The elements $\phi\in \cA$ are called the \emph{charts}
of~$M$ and $\cA$ a maximal cornered $C^\infty$-atlas.
Let $M$ and $N$ be smooth manifolds with corners.
A map $f\colon M\to N$ is called \emph{smooth}
if $f$ is continuous and $\phi\circ f\circ\psi^{-1}\colon \psi(U_\psi\cap f^{-1}(U_\phi))\to
V_\phi$
is smooth for each chart $\psi$ of~$M$ and
each chart $\phi$ of~$N$.
\end{defn}
\noindent
As cube-like polytopes are manifolds with corners, we have:
\begin{la}\label{sta-cha}
If $M$ is a cube-like $n$-polytope,
$x\in M$ and $i:=\ind_M(x)$, then there exists
an open $x$-neighbourhood $U\sub M$
and a $C^\infty$-diffeomorphism
\[
\kappa\colon U\to [0,1[^i\times \,]{-1},1[^{n-i}
\]
such that $\kappa(x)=0$ and
$U\cap F=\emptyset$ for all
facets $F$ of~$M$ with $x\not\in F$.
\end{la}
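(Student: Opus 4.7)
The plan is to first shrink so as to exclude facets not containing $x$, then build and normalize a cube-like chart at~$x$, and finally match the number of ``zero coordinates'' in the chart with $i=\ind_M(x)$.

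Concretely, I would set
\[
U':=M\setminus \bigcup\{F\colon F\text{ is a facet of }M\text{ with }x\notin F\},
\]
which is an open neighborhood of~$x$ since only finitely many facets occur and each contributes a closed set missing~$x$. By cube-likeness (Definition~\ref{defnculi}), pick a chart $\phi\colon U_0\to V_0\sub [0,\infty[^n$ with $x\in U_0\sub U'$. Let $y:=\phi(x)$, $I:=\{k\colon y_k=0\}$ and $j:=|I|$. Permuting coordinates of $[0,\infty[^n$ and translating in the coordinates outside~$I$, one may assume $I=\{1,\ldots,j\}$ and $y=0$. Pick $a>0$ with $[0,a[^j\times\,]{-a},a[^{n-j}\sub V_0$ and put $U_1:=\phi^{-1}([0,a[^j\times\,]{-a},a[^{n-j})\sub U'$, $\kappa:=a^{-1}\phi|_{U_1}$; this furnishes a $C^\infty$-diffeomorphism
\[
\kappa\colon U_1\to [0,1[^j\times\,]{-1},1[^{n-j}
\]
with $\kappa(x)=0$ and $U_1\cap F=\emptyset$ for every facet $F$ with $x\notin F$.

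The remaining step, and the main obstacle, is to verify $j=i$. I would sandwich the quantity $N:=\#\{\text{facets of }M\text{ through }x\}$ between $i$ and $j$ via a correspondence between coordinate hyperplanes of the model and facets through~$x$. For $i\le N$: the face $M(x)$ is the intersection of the facets of~$M$ containing it (a standard polytope-theoretic fact, cf.~\cite{Bro83}), so at least $i$ such facets are needed since $M(x)$ has codimension~$i$. For $N\le j$: for each facet $F\ni x$, the image $\kappa(F\cap U_1)$ is a relatively closed smooth $(n-1)$-dimensional subset of $[0,1[^j\times\,]{-1},1[^{n-j}$ lying in the boundary arrangement $\bigcup_{k=1}^j\{z_k=0\}$, hence by local connectedness and a dimension count it lies in one hyperplane $\{z_k=0\}$, and the assignment $F\mapsto k$ is injective (two facets whose $\kappa$-images share an $(n-1)$-dimensional subset would share an affine hull and therefore coincide). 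For $j\le i$: the set $\algint M(x)\cap U_1$ is a smooth $(n-i)$-dimensional subset of $M$ mapped by $\kappa$ into the deepest stratum $\{z\colon z_1=\cdots=z_j=0\}$ of dimension $n-j$. Chaining $i\le N\le j\le i$ gives $j=i$.

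The hardest part is this last step: making the hyperplane--facet correspondence precise, in particular showing that every local facet-piece lands in exactly one coordinate hyperplane of the model and that the assignment is injective. This is essentially invariance of the corner-index for the cube-like structure on~$M$, and executing it rigorously requires carefully combining the local convexity of~$M$ near~$x$ with the smoothness of the chart transitions.
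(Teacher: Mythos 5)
Your construction (shrink $M$ by removing the facets not through $x$, then pick a cube-like chart, translate, permute and rescale) is exactly the one in the paper's proof. Where you diverge is in trying to justify the equality $j=\ind_M(x)$: the paper simply declares that, after a permutation of coordinates, $y_1=\cdots=y_i=0$ and $y_{i+1},\dots,y_n>0$, i.e.\ it takes for granted that a cube-like chart necessarily has exactly $\ind_M(x)$ vanishing coordinates at $\phi(x)$ (the usual ``boundary/index invariance'' for manifolds with corners). Making this explicit is a legitimate improvement, so in spirit you are doing more than the paper.

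However, your sandwich $i\le N\le j\le i$ has a real gap in the last inequality. You assert that $\kappa$ maps $\algint M(x)\cap U_1$ into the stratum $\{z_1=\cdots=z_j=0\}$; nothing you have established forces this. What your $N\le j$ step gives is that $M(x)\cap U_1=\bigcap_m(F_m\cap U_1)$ lands in $\bigcap_m\{z_{\sigma(m)}=0\}$, a coordinate subspace of dimension $n-N$, and the dimension count then yields $N\le i$, not $j\le i$. To conclude $j\le i$ you would additionally need the facet--hyperplane assignment $\sigma$ to be \emph{onto} $\{1,\dots,j\}$, which is again a form of the very index invariance you are trying to prove, so as written the argument is circular. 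The clean way to close the gap is to run your $N\le j$ argument symmetrically with $\kappa^{-1}$ in place of $\kappa$: each coordinate hyperplane $\{z_k=0\}$ ($k\le j$) is pulled back by $\kappa^{-1}$ into $\partial M\cap U_1=\bigcup_m(F_m\cap U_1)$, a connected $(n-1)$-dimensional piece of which must lie in a single facet $F_{m(k)}$ (the only facets available are the $F_m$, since the others were excluded), and $k\mapsto m(k)$ is injective because $\kappa(F_m\cap U_1)$ lies in a single hyperplane; this gives $j\le N$ and hence $i=N=j$. Alternatively one can observe that $D\kappa(x)$ is a linear isomorphism of $\R^n$ mapping the tangent cone of $M$ at $x$ onto $[0,\infty[^j\times\R^{n-j}$, and that the maximal linear subspaces of these cones must therefore have the same dimension, $n-i=n-j$. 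Either route also has to make precise the step you use implicitly for $N\le j$, namely that $\kappa$ carries boundary to boundary: that follows from the inverse function theorem applied to local $C^\infty$-extensions of $\kappa$ and $\kappa^{-1}$, which sends the $\R^n$-interior of $U_1$ onto the $\R^n$-interior of $V_1$.
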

\noindent
We shall call such a diffeomorphism a \emph{standard chart}
of~$M$ around~$x$.
\begin{proof}
Since $M$ is cube-like,
there exists a $C^\infty$-diffeomorphism
$\phi\colon W\to V$
from
an open $x$-neighbourhood $W\sub M$
onto an open subset $V\sub [0,\infty[^n$.
As the union $A$ of facets not containing~$x$
is closed, after replacing $W$ with $W\setminus A$
we may assume that $W\cap A=\emptyset$.
Let $y:=(y_1,\ldots, y_n):=\phi(x)$.
After a permutation of the coordinates,
we may assume that $y_1=\cdots=y_i=0$
and $y_{i+1},\ldots, y_n>0$.
We may assume that $V\sub [0,\infty[^i\times \,]0,\infty[^{n-i}$,
after intersecting with the latter relatively open set.
For some $\ve>0$, we have $Q:=[0,\ve[^i\times \,]{-\ve},\ve[^{n-i}\sub V-y$.
Then $U:=\phi^{-1}(Q)$ is an open $x$-neighbourhood in~$M$ and
\[
\kappa\colon U\to [0,1[^i\times\,]{-1},1[^{n-i},\quad
z\mto \frac{1}{\ve}(\phi(z)-y)
\]
is as desired.
\end{proof}
%
%
%\begin{rem}
\noindent
Cube-like polytopes are useful
because
the standard charts just constructed
will enable the
extension results of Section~\ref{sec-loc}
to be applied locally.
We show:
\begin{prop}\label{main-equivalence}
The following properties
are equivalent for each polytope~$M$:
\begin{itemize}
\item[\rm(a)]
$M$ is a simple polytope.
\item[\rm(b)]
$M$ is cube-like.
\end{itemize}
\end{prop}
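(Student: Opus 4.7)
I would prove the two implications of Proposition~\ref{main-equivalence} separately.

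\emph{(a) $\Rightarrow$ (b).} Given $x \in M$, I would build a chart near $x$ from the supporting hyperplanes of the facets through $x$. Let $i := \ind_M(x)$ and $F := M(x)$, so $\dim F = n - i$. By condition~(c) of Definition~\ref{defn-sim}, exactly $i$ facets $F_1,\ldots,F_i$ of $M$ contain $x$, each lying in a supporting hyperplane $\{\tilde\ell_k = 0\}$ with $\tilde\ell_k \geq 0$ on $M$. The crucial linear-algebra step is that $\tilde\ell_1,\ldots,\tilde\ell_i$ are linearly independent: pick any vertex $v$ of $F$; by simplicity $v$ lies in exactly $n$ facets, giving $n$ half-space functionals $\tilde\ell_1,\ldots,\tilde\ell_n$ (after adding constants so each vanishes at $v$), and since $v$ is extreme the tangent cone $T_v M = \{u : \tilde\ell_k(u) \geq 0,\, k = 1,\ldots,n\}$ is pointed, forcing $\bigcap_k \ker\tilde\ell_k = \{0\}$ and hence linear independence of $\tilde\ell_1,\ldots,\tilde\ell_n$ (and so of the first $i$). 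Completing to a basis by affine functionals $\mu_{i+1},\ldots,\mu_n$ taking positive values at $x$, one obtains an affine isomorphism $\phi\colon \R^n \to \R^n$. Since the facets not containing $x$ stay at positive distance from $x$, the polytope $M$ is locally cut out near $x$ only by the inequalities $\tilde\ell_k \geq 0$ with $k \leq i$; therefore for a sufficiently small open $x$-neighborhood $U \sub M$, $\phi(U)$ is relatively open in $[0,\infty[^n$ and $\phi|_U$ is the required diffeomorphism.

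\emph{(b) $\Rightarrow$ (a).} Let $v$ be a vertex of $M$. Applying Lemma~\ref{sta-cha} with $i = \ind_M(v) = n$ yields a chart $\kappa\colon U \to [0,1[^n$ with $\kappa(v) = 0$ and $U$ disjoint from facets of $M$ not containing $v$. After shrinking $U$ so that $F_k \cap U$ is convex for each facet $F_k$ through $v$, the image $\kappa(F_k \cap U)$ is a connected $(n-1)$-dimensional subset of $\partial[0,1[^n \cap \kappa(U) = \bigcup_j \{x_j = 0\} \cap \kappa(U)$. A standard argument (using smoothness of $\kappa$ together with the fact that at any point lying in two distinct coordinate hyperplanes the tangent plane would have to equal two different hyperplanes, a contradiction) forces $\kappa(F_k \cap U)$ to lie in a single coordinate hyperplane $\{x_{j_k} = 0\}$. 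The assignment $F_k \mapsto j_k$ is injective: distinct facets through $v$ have distinct supporting hyperplanes, hence distinct tangent hyperplanes at $v$, so they cannot both embed into the single smooth hypersurface $\kappa^{-1}(\{x_j = 0\} \cap \kappa(U))$. Thus $v$ is contained in at most $n$ facets; the classical lower bound that every vertex of an $n$-polytope lies in at least $n$ facets gives equality and verifies condition~(b) of Definition~\ref{defn-sim}.

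The main obstacle is the dimension-and-connectedness step in (b) $\Rightarrow$ (a), namely showing that $\kappa(F_k \cap U)$ really lies in a single coordinate hyperplane of $[0,1[^n$ rather than splitting across several. This combines smoothness of $\kappa$, convexity of facets, and the cleanup property of Lemma~\ref{sta-cha} that $U$ meets only the facets through $v$. A secondary subtlety in (a) $\Rightarrow$ (b) is arranging $\phi(U) \sub [0,\infty[^n$ rather than only $[0,\infty[^i \times \R^{n-i}$; this is handled by the positive values of $\mu_{i+1},\ldots,\mu_n$ at $x$ together with shrinking $U$.
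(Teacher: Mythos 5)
Your overall strategy matches the paper's on both implications: for (a)$\Rightarrow$(b) you build a linear chart near $x$ from the supporting hyperplanes of the facets through $x$, and for (b)$\Rightarrow$(a) you track how facets map under a cornered chart near a vertex. The main difference is in the supporting lemmas. For (a)$\Rightarrow$(b), the paper derives linear independence of the facet functionals via Br\o ndsted's Theorem~12.14, using a nested-intersection dimension count to force $\dim Y_k = n-k$; you instead choose a vertex $v$ of $M(x)$ and observe that the tangent cone at $v$ is pointed, which forces the $n$ facet functionals through $v$ to span $(\R^n)^*$ -- a self-contained geometric alternative that gives the same conclusion. For (b)$\Rightarrow$(a), you verify condition~(b) of Definition~\ref{defn-sim} at vertices only, whereas the paper proves the stronger Lemma~\ref{super-la}, which yields condition~(c) for every face and records the explicit bijection between local facets and coordinate hyperplanes that is reused later in the paper. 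The one step in your argument that should be tightened is the ``tangent plane'' heuristic used to place $\kappa(F_k\cap U)$ inside a single coordinate hyperplane: at corner points of $[0,1[^n$ the tangent object of the boundary stratum is a cone rather than a hyperplane, so the stated contradiction needs more care. The clean version, as in the paper's proof of Lemma~\ref{super-la}, is to note that $\algint(F_k)\cap U$ is connected and maps into the pairwise disjoint index-one strata of $\kappa(U)$, each of which lies in a single coordinate hyperplane, and then pass to the closure. That repairs the imprecision without changing your conclusion.
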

\begin{proof}
We may assume that $M$ is an $n$-polytope in~$\R^n$.
As singletons and compact intervals
are both simple and cube-like,
we may assume that $n\geq 2$.\medskip

``(a)$\Rightarrow$(b)'':
It suffices to show that,
for each $x\in M$, there exists
a $C^\infty$-diffeomorphism
$\kappa\colon U\to V$ from a relatively
open subset $U\sub M$ with $x\in U$
onto an open subset $V\sub \;[0,\infty[^i\times \R^{n-i}$
for some $i\in \{0,\ldots, n\}$.
For each $x$ in the interior of~$M$,
we can take $U=V=\mathring{M}$ equal to the interior, $i=0$ and $\kappa:=\id_{\mathring{M}}$.
Let $x\in M\setminus \mathring{M}$ now
and $M(x)$ be the face of~$M$
generated by~$x$. After a translation, we may assume that $x=0$.
Let $m$ be the number of facets of~$M$.
There exist
non-zero continuous linear functionals
$\lambda_1,\ldots,\lambda_m\in (\R^n)^*$
and real numbers $a_1,\ldots,a_m$
such that, setting
\[
K_j:=\{z\in \R^n\colon \lambda(z) \geq a_j\}
\]
for $j\in \{1,\ldots, m\}$, we have
\[
M=\bigcap_{j=1}^mK_j
\]
and
\[
F_j:=\{z\in \R^n\colon \lambda_j(z) =a_j\}\quad
\mbox{for $\,j\in \{1,\ldots,m\}$}
\]
are the facets of~$M$
(see \cite[Corollary~9.6]{Bro83}).
After a permutation,
we may assume that
$F_1,\ldots, F_i$ are the facets of~$M$
which contain $x$ (and hence also $F(x)$),
for some $i\in \{1,\ldots,m\}$.
Then
\[
W:=\{
z\in \R^n\colon (\forall j\in \{i+1,\ldots, m\})\;
\lambda_j(z)>a_j\}
\]
is an open subset of $\R^n$ such that $x\in W$ and
\[
W\cap M=W\cap \bigcap_{j=1}^i K_j.
\]
By \cite[Theorem~10.4]{Bro83},
$M(x)=\bigcap_{j=1}^iF_j$ holds.
Hence $M(x)$ is an $(n-i)$-face of~$M$,
by \cite[Theorem~12.14]{Bro83},
and thus $i=\ind_M(x)\in \{1,\ldots, n\}$.
Since $0=x\in F_j$ for $j\in \{1,\ldots, i\}$,
we must have $a_j=0$ for all $j\in \{1,\ldots,i\}$,
whence
\[
H_j:=\{z\in\R^n\colon \lambda_j(z)=0\}
\]
is a vector subspace of~$\R^n$ such that $F_j=H_j\cap M$.
Let $Y_k:=\bigcap_{j=1}^kH_k$ for $k\in \{1,\ldots, i\}$.
Now $\bigcap_{j=1}^kF_k=M\cap \bigcap_{j=1}^kH_j=M\cap Y_k$
is an $n-k$-face of~$M$ for $k\in \{1,\ldots,i\}$, by \cite[Theorem~12.14]{Bro83}.
As a consequence, $Y_{k+1}$ must be a proper vector subspace of $Y_k$
for all $k\in\{1,\ldots,i-1\}$.
Since each $Y_k$ has codimension $\leq k$ in~$\R^n$,
we deduce that $\dim(Y_k)=n-k$ for all $k\in \{1,\ldots,i\}$.
As a consequence, $\lambda_1,\ldots, \lambda_i$ are linearly independent.
Let $e_1,\ldots, e_n$ be the standard basis vectors
of $\R^n$ and $e_1^*,\ldots, e_n^*\in(\R^n)^*$ be
the dual basis, determined by $e_k^*(e_\ell)=\delta_{k,\ell}$.
There exists a vector space automorphism
$\alpha\colon \R^n\to\R^n$ such that
\[
\alpha^*(\lambda_j)=e_j^*\quad\mbox{for all $\,j\in \{1,\ldots,i\}$,}
\]
using the dual linear map $\alpha^*\colon (\R^n)^*\to(\R^n)^*$,
$\lambda\mto \lambda\circ\alpha$.
Note that
\[
\{w\in \R^n\colon (\forall j\in \{1,\ldots,i\})\;e_j^*(w)\geq 0\}
=[0,\infty[^i\times\R^{n-i}.
\]
Now $\alpha^{-1}(W)$ is open in $\R^n$
and $z\in K_1\cap\cdots\cap K_i$ for $z\in \R^n$
if and only if
\[
0\leq \lambda_j(z)=\lambda_j(\alpha(\alpha^{-1}(z))))=e_j^*(\alpha^{-1}(z))
\]
for all $j\in \{1,\ldots,i\}$, i.e., if and only if
$\alpha^{-1}(z)\in [0,\infty[^i\times\R^{n-i}$.
Thus $\alpha^{-1}$ maps $K_1\cap\cdots\cap K_i$ onto
$[0,\infty[^i\times \R^{n-i}$. As a consequence,
$\alpha^{-1}$ restricts to a $C^\infty$-diffeomorphism
$\kappa$ from the relatively open subset
\[
W\cap M=W\cap K_1\cap\cdots\cap K_i
\]
of~$M$ onto the relatively open subset $\alpha^{-1}(W)\cap ([0,\infty[^i\times\R^{n-i})$
of $[0,\infty[^i\times\R^{n-i}$.\\[2.3mm]
``(b)$\Rightarrow$(a)'':
This implication is immediate from the next lemma,
which provides additional information.
\end{proof}
\begin{la}\label{super-la}
Let $M$ be a cube-like $n$-polytope and
$\kappa\colon U\to [0,1[^i\times \,]{-1},1[^{n-i}$
be a standard chart around $x\in M$.
If $i:=\ind_M(x)>0$
and $F_1,\ldots, F_m$ are the facets
of~$M$ containing~$x$,
then $m=i$ holds and
there is a permutation $\pi$ of $\{1,\ldots, i\}$
such that
\begin{equation}\label{face-subm}
\kappa^{-1}(F_{i,k}\times \,]{-1},1[^{n-i})=U\cap F_{\pi(k)}
\end{equation}
for all $k\in \{1,\ldots, i\}$,
writing $F_{i,k}:=\{(y_1,\ldots, y_i)\in [0,1[^i\colon y_k=0\}$.
For each facet $F$ of~$M$ such that $F\not\in\{F_1,\ldots, F_i\}$,
we have $F\cap U=\emptyset$.
\end{la}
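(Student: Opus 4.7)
The plan is to correlate the boundary structure of $M$ near $x$ with the cubical-corner structure of $V:=[0,1[^i\times\,]{-1},1[^{n-i}$ near $0$. Writing $W_k:=F_{i,k}\times\,]{-1},1[^{n-i}=V\cap\{y_k=0\}$ for $k\in\{1,\ldots,i\}$, I will match each facet $F_j$ through $x$ with a unique hyperplane $W_k$, exploiting that $D\kappa_y$ is a linear isomorphism of $\R^n$ for every $y\in U$ (forced by chain rule from $\kappa^{-1}\circ\kappa=\id_U$).

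First I would establish the interior-boundary correspondence: $\kappa$ maps $U\cap M^\circ$ bijectively onto $V\cap(\,]0,1[^i\times\,]{-1},1[^{n-i})$, and consequently $U\cap\partial M=U\cap\bigcup_{j=1}^m F_j$ onto $V\cap\bigcup_{k=1}^i W_k$. This is immediate from the invertibility of $D\kappa_y$ together with the ordinary inverse function theorem at points possessing classical $\R^n$-open neighbourhoods in $U$ or in $V$.

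Next I would set up two maps in opposite directions and prove they are mutually inverse. Let $W_k^\circ:=\{y\in W_k : y_r>0\text{ for all }r\in\{1,\ldots,i\}\setminus\{k\}\}$, which is convex, connected, and dense in $W_k$. At any $z\in W_k^\circ$ the set $V$ coincides locally with the half-space $\{y_k\geq 0\}$, so the diffeomorphism $\kappa^{-1}$ forces $M$ to look like a half-space near $\kappa^{-1}(z)$ as well; in particular $\kappa^{-1}(z)$ lies in a unique facet (two facets meeting would give $M$ a corner there, not a half-space). Since the strata $\algint(F_j)\cap U$ are pairwise disjoint and relatively open in $\partial M\cap U$, connectedness of $\kappa^{-1}(W_k^\circ)$ produces a unique $\pi(k)\in\{1,\ldots,m\}$ with $\kappa^{-1}(W_k^\circ)\subseteq\algint(F_{\pi(k)})$. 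In the other direction, for each $j$ I would study the image $D\kappa_0(H_j)$ of the tangent hyperplane $H_j=\ker\lambda_j$: affine curves $t\mto tv$ in $\algint(F_j)$ send their images into some $W_{\ell(v)}^\circ$ with $\ell(v)$ locally constant, and $\ell(v)$ is then globally constant as $v$ ranges over the relatively open cone $\{v\in H_j : \lambda_r(v)>0\text{ for all }r\neq j\text{ with }x\in F_r\}$, which spans $H_j$. Dividing by $t$ and taking $t\to 0^+$ yields $(D\kappa_0(v))_{\ell(j)}=0$ for $v$ in that cone, so $D\kappa_0(H_j)\subseteq\{y_{\ell(j)}=0\}$, and equality holds by dimension.

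For mutual invertibility, I would pick $y_n\in\kappa^{-1}(W_k^\circ)$ with $y_n\to x$: at each $y_n$ the diffeomorphism forces $D\kappa_{y_n}(H_{\pi(k)})=\{y_k=0\}$, and continuity of $y\mto D\kappa_y$ transfers this equality to the limit, giving $\ell(\pi(k))=k$; the other composition is analogous. Thus $m=i$ and $\pi$ is a permutation. Equation~(\ref{face-subm}) then follows by closure arguments: "$\subseteq$" combines density of $W_k^\circ$ in $W_k$ with closedness of $F_{\pi(k)}\cap U$ in $U$; "$\supseteq$" reruns the half-space argument at $y\in\algint(F_{\pi(k)})\cap U$ (which forces $\kappa(y)\in W_{\ell'}^\circ$ for some $\ell'$, and injectivity of $\pi$ then forces $\ell'=k$), followed by density of $\algint(F_{\pi(k)})$ in $F_{\pi(k)}$ and closedness of $W_k$ in $V$. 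The final assertion of the lemma is immediate from the standing hypothesis that $\kappa$ is a standard chart around $x$ (Lemma~\ref{sta-cha}). The step I expect to be the main obstacle is the mutual-invertibility check, since it requires carefully matching behaviour at smooth boundary points (where half-space reasoning is clean) with the corner point $x$ itself (where one must rely on limits of differentials and a tangent-cone argument).
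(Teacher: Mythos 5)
Your proof is correct, but it takes a genuinely different route from the paper's. Both arguments begin the same way: one uses the connectedness of $\kappa^{-1}(F_{i,k}^*\times\,]{-1},1[^{n-i})$ (your $\kappa^{-1}(W_k^\circ)$) and the fact that the sets $\algint(F_j)$ are the connected components of the index-$1$ stratum to define a map $\pi\colon\{1,\ldots,i\}\to\{1,\ldots,m\}$, and both get surjectivity of $\pi$ by observing that $\algint(F_j)\cap U$ is non-empty. The two proofs diverge on how they show $\pi$ is injective and hence $m=i$. The paper simply counts: it invokes the classical polytope fact that a face of dimension $n-i$ lies in at least $i$ facets (\cite[Theorem~10.4]{Bro83}) to get $m\geq i$, then surjectivity of $\pi$ gives $m\leq i$, and a surjection of a finite set onto itself is a bijection; this is purely topological/combinatorial, using only continuity, density, and closedness. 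You instead construct an explicit two-sided inverse $\ell$ to $\pi$ by differential-geometric means: one-sided directional derivatives along affine rays in the tangent cone of $\algint(F_j)$ at $x$ show $D\kappa_0(H_j)\subseteq\{y_{\ell(j)}=0\}$, and a limit of the differentials $D\kappa_{y_n}$ along $y_n\to x$ in $\kappa^{-1}(W_k^\circ)$ shows $\ell\circ\pi=\id$. What the paper's route buys is brevity and avoidance of any tangent-cone or limit-of-differentials machinery; what your route buys is independence from the polytope-theoretic bound, at the cost of needing to justify that $\kappa$ preserves the index stratification and boundary tangent spaces at index-$1$ points, which you correctly flag as the delicate part. (The paper does not explicitly address the lemma's final assertion either, but you are right that it is immediate from the definition of a standard chart in Lemma~\ref{sta-cha}.)
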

\begin{proof}
The face $M(x)$ of $M$ generated by~$x$
has dimension $n-i$.
Let $F_1,\ldots, F_m$
be the mutally distinct facets
of~$M$ which contain~$x$.
A facet $F$ of~$M$ contains
$x$ if and only if $M(x)\sub F$.
Hence $m\geq n-(n-i)=i$ by
\cite[Theorem~10.4]{Bro83}.
Let $S=\bigcup_{k=1}^m\algint(F_k)$.
For each $j\in \{1,\ldots, m\}$, the connected set
\[
\algint(F_j)=S\setminus\bigcup_{k\not=j}F_k
\]
is relatively open in~$S$
(where we used that $\algint(F_j)\cap F_k=\emptyset$
for all $k\in\{1,\ldots, m\}$ such that $k\not=j$).
Hence $\algint(F_1),\ldots,\algint(F_m)$
are the connected components of~$S$.
Let
$F_{i,k}^*$ be the set of all $(y_1,\ldots,y_i)\in F_{i,k}$
such that $y_j>0$ for all $j\in \{1,\ldots, i\}\setminus\{k\}$.
Then
\[
\kappa^{-1}(F_{i,k}^*\times\,]{-1},1[^{n-i})\sub S
\]
and thus $\kappa^{-1}(F_{i,k}^*\times\,]{-1},1[^{n-i})\sub \algint(F_{\pi(k)})$
for some $\pi(k)\in \{1,\ldots,i\}$,
the preimage being connected. As $F_{\pi(k)}$ is closed
and $F_{i,k}^*$ is dense in $F_{i,k}$, we infer
\begin{equation}\label{half}
\kappa^{-1}(F_{i,k}\times\,]{-1},1[^{n-i})\sub F_{\pi(k)}.
\end{equation}
Since $U\cap\algint(F_j)$ is dense in the $x$-neighbourhood
$U\cap F_j$ in~$F_j$, we must have $U\cap\algint(F_j)\not=\emptyset$
for all $j\in \{1,\ldots, m\}$. Hence $\pi\colon \{1,\cdots, i\}\to\{1,\ldots,m\}$
is surjective and thus $m\leq i$,
whence $m=i$.
Being a surjective self-map
of a finite set, $\pi$ is a bijection.\\[2.3mm]
If $z\in U$ and $\ind_M(z)=1$,
then $\kappa(z)$ has index~$1$ in $\kappa(U)$,
whence
\[
\kappa(z)\in F_{i,k}^*\times \,]{-1},1[^{n-i}\quad\mbox{for some $k\in \{1,\ldots,i\}$}
\]
(as the union of these sets equals $\{y\in \kappa(U)\colon \ind_{\kappa(U)}(y)=1\}$)
and thus $z\in U\cap \algint(F_{\pi(k)})$.
Hence
\[
\kappa(U\cap\algint(F_{\pi(k)}))=F_{i,k}^*\times \,]{-1},1[^{n-i}.
\]
As $\algint(F_{\pi(k)})$ is dense in $F_{\pi(k)}$,
the map $\kappa$ is continuous and $F_{i,k}\times \,]{-1},1[^{n-i}$
is closed in $\kappa(U)$, we deduce that $\kappa(U\cap F_{\pi(k)})\sub F_{i,k}\times\,
]{-1},1[^{n-i}$. Since
$\kappa^{-1}(F_{i,k}\times \,]{-1},1[^{n-i})\sub U\cap F_{\pi(k)}$ by (\ref{half}),
the equality in~(\ref{face-subm}) follows.
\end{proof}
\begin{rem}\label{rem:mf_faces}
Since cube-like and simple embedded polytopes coincide by \Cref{main-equivalence}, we obtain an even stronger statement: A cube-like $n$-polytope in $\R^n$ is even a so called manifold with faces, cf.\ \cite[Section 1.1]{Jae68}. This subclass requires of the manifold (now translated to the specific case of an embedded polytope and its boundary) that the $k$-faces lie in the intersection of $n-k$ facets, i.e. that they satisfy \Cref{defn-sim} (c). However, we will not directly need this additional information in the following sections.
\end{rem}
\section{Proof of Theorem~\ref{thm-B} and related results}
\begin{numba}\label{extend-zero}
If $E$ is a finite-dimensional vector
space, $U\sub E$ a locally convex subset with dense
interior, $K\sub U$ a closed subset,
$m\in \N_0\cup\{\infty\}$ and $Y$
a locally convex space,
then
\[
C^m_K(U,Y):=\{f\in C^m(U,Y)\colon \Supp(f)\sub K\}
\]
is a closed vector subspace of $C^m(U,Y)$.
If $V\sub E$ is a locally convex subset with
dense interior such that $K\sub V$ and $V\sub U$,
then the restriction map
\[
\rho_{V,U}\colon C^m_K(U,Y)\to C^m_K(V,Y),\quad f\mto f|_V
\]
is continuous and linear (see \ref{basicpull})
and a homeomorphism as the map
$C^m_K(V,Y)\to C^m_K(U,Y)$ extending functions by $0$
(which is the inverse of $\rho_{V,U}$)
is continuous. This is clear for the topology of compact
convergence if $m=0$. Since $d^kf\in C_{K\times E^k}(U\times E^k,F)$
for each $f\in C^m_K(U,F)$ and $k\in \N$ such that $k\leq m$
(and likewise for $V$ in place of~$U$),
the assertion follows. Cf.\ also \cite[Lemma~4.1.9]{GN25}.
\end{numba}
\begin{numba}\label{add-on}
In the situation of Theorem~\ref{thm-B},
if $N$ is a face of~$M$ of dimension $>\ell$,
write $\cF(N,\ell)$ for the set
of all $\ell$-faces of~$N$.
We shall see that $\sigma$ in Theorem~B
can be chosen with the following additional property:
For each $f=(f_F)_{f\in\cF}\in \cE$ and face $N$ of $M$ of dimension $>\ell$,
\[
\sigma(f)(N)\;\sub \;\; \Spann \!\!\!\! \bigcup_{F\in \cF(N,\ell)}\!\! f_F(F).
\]
\end{numba}
\noindent
{\bf Proof of Theorem~\ref{thm-B}.}
We show by induction on $j\in \N$ that
the assertion of the theorem holds and
that the continuous linear right inverse
can be chosen as in~\ref{add-on}
for each integer $n\geq j+1$,
with $\ell:=n-j$.\\[2.3mm]
The case $j=1$. Then $\ell=n-1$
and $\cF$ is the set of facets of~$M$,
for each finite-dimensional vector space~$E$
and $n$-polytope $M\sub E$.
For each $z\in M$, we let $i(z):=\ind_M(z)$
and pick a standard chart
\[
\kappa_z\colon U_z\to [0,1[^{i(z)}\times\,]{-1},1[^{n-i(z)}
\]
around~$z$, as in Lemma~\ref{sta-cha}.
If $i(z)>0$,
let $F^z_1,\ldots, F^z_{i(z)}$
be the facets of~$M$ containing~$z$;
after a permutation of the indices,
we may assume that
\[
\kappa_z^{-1}(F_{i(z),k}\times\,]{-1},1[^{n-i(z)})=U_z\cap F_k^z
\]
for $k\in \{1,\ldots, i(z)\}$,
see Lemma~\ref{super-la}.
For each $f=(f_F)_{F\in \cF}\in \cE$,
we have
\begin{equation}\label{toolong}
\Xi_z(f):=(f_{F^z_k}\circ \kappa_z^{-1}|_{F_{i(z),k}\times ]{-1},1[^{n-i(z)}})_{k=1}^{i(z)}
\in E_{n,i(z)}
\end{equation}
with notation as in \ref{reunot}
($i(z)$ playing the role of~$i$).
In fact, if $k,k'\in \{1,\ldots, i(z)\}$,
$x\in F_{i(z),k}\cap F_{i(z),k'}$,
and $q\in\;]{-1},1[^{n-i(z)}$,
then
\[
\kappa_z^{-1}(x,q)\in F^z_k\cap F^z_{k'},
\]
whence $f_{F^z_k}(\kappa_z^{-1}(x,q))
=f_{F^z_{k'}}(\kappa_z^{-1}(x,q))$
as $(f_F)_{F\in \cF}\in \cE$.
Thus~(\ref{toolong}) holds.
As a consequence of \ref{basicpull},
the map
\[
\Xi_z\colon \cE\to E_{n,i(z)}
\]
is continuous and linear and so is
\[
(\kappa_z)^*\circ \Phi_{n,i(z)}\circ \Xi_z\colon \cE\to C^m([0,1[^{i(z)}\times\;]{-1},1[^{n-i(z)},Y),
\]
using $\Phi_{n,i(z)}$ as in~\ref{reunot}
and the map
\[
(\kappa_z)^*\colon C^m([0,1[^{i(z)}\times \;]{-1},1[^{n-i(z)},Y)\to C^m(U_z,Y),\;\;
g\mto g\circ \kappa_z
\]
which is continuous linear (cf.\ \ref{basicpull}).
Note that, for each $k\in \{1,\ldots, i(z)\}$
and $x\in U_z\cap F^z_k$, we have $\kappa_z(x)\in F_{i(z),k}\times\;]{-1},1[^{n-i(z)}$
and thus
\[
((\kappa_z)^*\circ \Phi_{n,i(z)}\circ\Xi_z)(f)(x)
=\Phi_{n,i(z)}(\Xi_z(f))(\kappa_z(x))=
f_{F^z_k}(\kappa_z^{-1}(\kappa_z(x))=f_{F^z_k}(x),
\]
that is,
\begin{equation}\label{halfway}
((\kappa_z)^*\circ \Phi_{n,i(z)}\circ\Xi_z)(f)(x)
=f_{F^z_k}(x).
\end{equation}
There exists a smooth partition of unity $(h_z)_{z\in M}$
on $M$ with $S_z:=\Supp(h_z)$ $\sub U_z$ for all $z\in M$.
Then
\[
Z:=\{z\in M\colon \mbox{$\ind_M(z)>0$ and $h_z\not=0$}\}
\]
is a finite subset of~$M$.
The multiplication operator
\[
\mu_z\colon C^m(U_z,F)\to C^m_{S_z}(U_z,F),\quad g \mto h_z\cdot g
\]
is continuous and linear (see \cite[Lemma~4.1.39]{GN25});
also the operator
\[
\ve_z\colon C^m_{S_z}(U_z,Y)\to C^m_{S_z}(M,Y)\sub C^m(M,Y)
\]
which extends functions by~$0$ is continuous linear
(cf.\ \ref{extend-zero}).
Hence
\[
\alpha_z:=\ve_z\circ \mu_z\circ (\kappa_z)^*\circ
\Phi_{n,i(z)}\circ\Xi_z\colon \cE\to C^m(M,Y)
\]
is a continuous linear map. As a consequence, also the map
\[
\sigma:=\sum_{z\in Z}\alpha_z\colon \cE\to C^m(M,Y)
\]
is continuous and linear.
Let $f=(f_F)_{F\in \cF}\in\cE$.
Let $F$ is a facet of~$M$ and $x\in F$.
Given $z\in M$, $h_z(x)>0$ implies that $x\in U_z$,
whence $\ind_M(z)>0$ (as $U_z\sub\algint(M)$
if $\ind_M(z)=0$). Thus $z\in Z$, entailing that
\begin{equation}\label{sumone}
\sum_{z\in Z_x}h_z(x)=1
\end{equation}
with $Z_x:=\{z\in Z\colon h_z(x)>0\}$.
For each $z\in Z_x$,
we have $F\cap U_z\not=\emptyset$
as $x\in F\cap U_z$ and
hence $F=F^z_{k(z)}$ for some
$k(z)\in \{1,\ldots,i(z)\}$,
by the final condition in Lemma~\ref{sta-cha}.
Using (\ref{halfway}) and (\ref{sumone}), we deduce that
\[
\sigma(f)(x)=\sum_{z\in Z_x}h_z(x)
((\kappa_z)^*\circ
\Phi_{n,i(z)}\circ\Xi_z)(f)(x)=
\sum_{z\in Z_x}h_z(x)f_F(x)=f_F(x).
\]
Thus $(r\circ \sigma)(f)=f$.
Let $x\in M$. If $\ind_M(x)>0$,
then $x\in G$ for some facet $G$ of~$M$
and $\sigma(f)(x)=f_G(x)$, whence
\begin{equation}\label{element-span}
\sigma(f)(x)\;\in\;\Spann\;\;\bigcup_{F\in \cF}f_F(F)=:W.
\end{equation}
For each $z\in M\setminus\algint(M)$,
each component of $\Xi_z(f)$
is a function with values in~$W$.
Hence also $\Phi_{n,i}(\Xi_z(f))$
is a function with values in~$W$,
by Remark~\ref{rem-in-span}.
If $x\in \algint(M)$, then $\sigma(f)(x)$
is a linear compination
of function values
of functions of the form
$\Phi_{n,i}(\Xi_z(f))$,
whence (\ref{element-span}) also
holds in this case.
Thus $\sigma$ satisfies the condition
formulated in \ref{add-on}.\\[2.3mm]
Continuing with the induction step, let us write $\cF(M,\ell)$, $\cE(M,\ell)$,
and $r_{M,\ell}$
in place of $\cF$, $\cE$,
and $r$, respectively,
in the situation of Proposition~B. Let us write
$\sigma_{M,\ell}$ in place of~$\sigma$,
if it exists. Let $j\geq 2$ be an integer
such that the assertion holds
for $j-1$ in place of~$j$.
Let $n\geq j+1$, $\ell:=n-j$, and $M$ be an $n$-polytope
in a finite-dimensional vector space~$E$.
For each $N\in \cF(M,\ell+1)$,
its set $\cF(N,\ell)$ of $\ell$-dimensional
faces is a subset of $\cF(M,\ell)$.
As $\cF(N,\ell)$ is the set
of facets of~$N$, the base of the induction
furnishes a continuous linear right inverse
\[
\sigma_{N,\ell}\colon \cE(N,\ell)\to C^m(N,Y)
\]
for $r_{N,\ell}$.
By the case $j-1$, we have a continuous linear
right inverse
\[
\sigma_{M,\ell+1}\colon \cE(M,\ell+1)\to C^m(M,Y)
\]
for $r_{M,\ell+1}$.
The map
\[
\prod_{F\in \cF(M,\ell)}C^m(F,Y)\to\prod_{F\in \cF(N,\ell)}C^m(F,Y),\quad
(f_F)_{F\in \cF(M,\ell)}\mto(f_F)_{F\in \cF(N,\ell)}
\]
is continuous linear and restricts to a map
\[
R_{N,M}\colon \cE(M,\ell)\to\cE(N,\ell).
\]
For each $f=(f_G)_{G\in \cF(M,\ell)}\in \cE(M,\ell)$,
we have
\begin{equation}\label{pre-conditio}
(\sigma_{N,\ell}\circ R_{N,M})(f)|_F=\sigma_{N,\ell}((f_G)_{G\in \cF(N,\ell)})|_F=
f_F
\end{equation}
for each $f\in \cF(N,\ell)$.
The map
\[
\alpha:=(\sigma_{N,\ell}\circ R_{N,M})_{N\in\cF(M,\ell+1)}
\colon \cE(M,\ell)\to\prod_{N\in \cF(M,\ell+1)}C^m(N,Y)
\]
is continuous linear. We claim that $\alpha$ has image
in $\cE(M,\ell+1)$; we can therefore
consider its corestriction
\[
\beta\colon \cE(M,\ell)\to \cE(M,\ell+1),\quad f\mto \alpha(f).
\]
Then $\beta$ and
\[
\sigma_{M,\ell}:=\sigma_{M,\ell+1}\circ \beta\colon \cE(M,\ell)\to C^m(M,Y)
\]
are continuous linear mappings.
Let $f=(f_F)_{F\in \cF(M,\ell)}\in \cE(M,\ell)$
and $g:=\beta(f)$; write $g=(g_N)_{N\in \cF(M,\ell+1)}$.
For each $F\in \cF(M,\ell)$,
there exists $N\in \cF(M,\ell+1)$
such that $F\sub N$ (see \cite[Corollary~9.7]{Bro83}).
Then
\[
\sigma_{M,\ell}(f)|_F=(\sigma_{M,\ell}(f)|_N)|_F
=(\sigma_{M,\ell+1}(g)|_N)|_F=g_N|_F=f_F.
\]
Thus $(r_{M,\ell}\circ \sigma_{M,\ell})(f)=f$,
as $F\in \cF(M,\ell)$ was arbitrary.
To see that $\sigma_{M,\ell}$ (in place of~$\sigma$)
satisfies the condition of~\ref{add-on},
let $f=(f_F)_{F\in \cF(M,\ell)}\in \cE(M,\ell)$.
Let $g_N:=\sigma_{N,\ell}(R_{N,M}(f))$
for $N\in \cF(M,\ell+1)$; thus
$g:=(g_N)_{N\in\cF(M,\ell+1)}=\beta(f)$.
By the base of the induction,
\[
g_N(N)\, \sub  \;\; \Spann\!\!\! \bigcup_{F\in \cF(N,\ell)}f_F(F).
\]
Since $\sigma_{M,\ell}(f)|_N=\sigma_{M,\ell+1}(g)|_N=g_N$,
we see that the condition of \ref{add-on}
is satisfied for each $N\in \cF(M,\ell+1)$.
If $K$ is a face of~$M$ of dimension $d>\ell+1$,
then
\[
\sigma_{M,\ell}(f)(K)\, =\, \sigma_{M,\ell+1}(g)(K)\sub\;\; \Spann\!\!\!
\bigcup_{N\in\cF(K,\ell+1}g_N(N)
\]
by induction, where
\[
g_N(N)\, \sub\;\; \Spann\!\!\! \bigcup_{F\in \cF(N,\ell)}f_F(F)\,
\sub\;\; \Spann\!\!\!
\bigcup_{F\in \cF(K,\ell)}f_F(F)=:W.
\]
Hence $\sigma_{M,\ell}(f)(K)\sub W$, establishing the condition of \ref{add-on}
for~$K$ in place of~$N$.\\[2.3mm]
It remains to prove the claim.
Let $f=(f_F)_{F\in \cF(M,\ell)}\in \cE(M,\ell)$
and $g:=\alpha(f)$; write $g=(g_N)_{N\in\cF(M,\ell+1)}$.
For all $N_1,N_2\in \cF(M,\ell+1)$,
we show that
\begin{equation}\label{conditio}
g_{N_1}|_{N_1\cap N_2}=g_{N_2}|_{N_1\cap N_2}.
\end{equation}
Excluding trivial cases,
we may assume that $N_1\not=N_2$ and $N_1\cap N_2\not=\emptyset$.
Then
$N_1\cap N_2$ is a face of $M$
of dimension $\leq \ell$.
By \cite[Corollary~9.7]{Bro83},
there exists a
face $F$ of $M$ of dimension~$\ell$
such that $N_1\cap N_2\sub F$.
Then
\[
g_{N_1}|_F=f_F=g_{N_2}|_F,
\]
by (\ref{pre-conditio}),
from which (\ref{conditio}) follows. $\,\square$\\[2.3mm]
\begin{numba}\label{facesandstratvec}
If $M\not=\emptyset$ is a polytope in a finite-dimensional
real vector space~$E$,
then the vector subspace
\[
E_M:=\aff(M)-x
\]
of~$E$
is independent of $x\in M$.
Given a non-empty face~$F$ of~$M$,
we define $E_F\sub E$ in the same way. For $x\in M$,
we write $M(x)$ for the face of~$M$
generated by~$x$ (the smallest face containing~$x$)
and abbreviate
\[
E_x:=E_{M(x)}.
\]
We call
\[
C^\infty_{\str}(M,E):=\{f\in C^\infty(M,E)\colon (\forall x\in M)\; f(x)\in E_x\}
\]
the space of \emph{stratified vector fields};
we endow it with the topology induced
by the compact-open $C^\infty$-topology on $C^\infty(M,E)$.
It is unchanged if we replace~$E$
with a vector subspace of~$E$ which contains $\aff(M)$.
\end{numba}
\begin{numba}\label{span-inside}
If $F$ and $N$ are non-empty faces of $M\sub E$
such that $F\sub N$, then $\aff(F)\sub \aff(N)$
and hence $\aff(F)-x\sub \aff(N)-x$ for $x\in F$, whence
\[
E_F\sub E_N.
\]
As a consequence,
\[
\Spann \bigcup_{F\in \cF}E_F\sub E_N
\]
for each set $\cF$ of non-empty faces of~$N$.
\end{numba}
\begin{la}\label{criterion-stratified}
Let $n\geq 2$ be an integer,
$M$ be an $n$-polytope in a finite-dimensional vector space~$E$
and $\ell\in \{1,\ldots, n-1\}$.
Let $f\colon M\to E$ be a smooth function; assume that
\begin{itemize}
\item[\rm(a)]
$f|_F\in C^\infty_{\str}(F,E)$ for each $\ell$-face
$F$ of~$M$; and
\item[\rm(b)]
For each face $N$ of~$M$
of dimension $>\ell$,
\[
f(N)\, \sub\;\, \Spann\!\!\! \bigcup_{F\in \cF(N,\ell)}f(F),
\]
where $\cF(N,\ell)$ is the set of $\ell$-faces of~$N$.
\end{itemize}
Then $f\in C^\infty_{\str}(M,E)$.
\end{la}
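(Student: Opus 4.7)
The conclusion $f \in C^\infty_{\str}(M,E)$ amounts to the pointwise condition $f(x) \in E_x = E_{M(x)}$ for every $x \in M$. I would fix such an $x$, set $N := M(x)$ and $d := \dim N$, and split into the two cases $d > \ell$ and $d \leq \ell$: hypothesis~(b) handles the first and hypothesis~(a) handles the second, with~\ref{span-inside} and the compatibility between faces of~$M$ and faces of a face of~$M$ used to bridge the two face structures.

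In the case $d > \ell$, hypothesis~(b) applied to~$N$ yields
\[
f(x) \in f(N) \subseteq \Spann \bigcup_{F \in \cF(N,\ell)} f(F).
\]
Every $F \in \cF(N,\ell)$ is a face of~$N$, hence also an $\ell$-face of~$M$, so hypothesis~(a) applies. A short check, using the preliminary fact that a subset of a face of~$M$ is a face of that face iff it is a face of~$M$, shows that for $y \in F$ the smallest face of~$F$ containing~$y$ coincides with $M(y)$, and $M(y) \subseteq F$; consequently (a) gives $f(y) \in E_{M(y)} \subseteq E_F$, so $f(F) \subseteq E_F$. Since $F \subseteq N$ for every such~$F$, \ref{span-inside} supplies $\Spann \bigcup_{F \in \cF(N,\ell)} E_F \subseteq E_N$, and stringing these inclusions together yields $f(x) \in E_N = E_x$.

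In the case $d \leq \ell$, I would first pick an $\ell$-face $F$ of~$M$ with $N \subseteq F$; such an $F$ exists by iterated application of~\cite[Corollary~9.7]{Bro83}, used in the same way in the induction step of the proof of Theorem~\ref{thm-B}. By the same face-lattice compatibility as above, the smallest face of~$F$ containing~$x$ is $N$ itself, so hypothesis~(a) applied to~$F$ at~$x$ delivers $f(x) \in E_N = E_x$.

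The only non-cosmetic step is the existence of the intermediate $\ell$-face in the second case; once that standard polytope fact is invoked, the argument is pure bookkeeping with~(a), (b), and~\ref{span-inside}, and no analytic input is needed, since smoothness of~$f$ is part of the hypotheses and the conclusion is a pointwise algebraic condition.
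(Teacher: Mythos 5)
Your proposal is correct and follows essentially the same route as the paper: fix $x$, set $N := M(x)$, split into $\dim N > \ell$ (handled by (b) together with the observation $f(F) \subseteq E_F$ and \ref{span-inside}) and $\dim N \leq \ell$ (handled by (a) after choosing an $\ell$-face $F \supseteq N$ via \cite[Corollary~9.7]{Bro83}). The paper merely presents the two cases in the opposite order and records the inclusion $f(F)\subseteq E_F$ as a standalone display before using it.
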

\begin{proof}
Let $x\in M$ and $i:=\ind_M(x)$.
Thus $M(x)$ has dimension $n-i$.
If $n-i\leq \ell$,
then $M(x)\sub F$
for an $\ell$-face $F$ of~$M$
by \cite[Corollary 9.7]{Bro83}.
Since $f|_F\in C^\infty_{\str}(F,E)$ by~(a),
we deduce that $f(x)=f|_F(x)\in E_x$.
For each face $F$ of $M$
of dimension $\ell$, we have $M(y)\sub F$
for each $y\in F$ and thus $E_y=E_{M(y)}\sub F$.
Moreover, $n-\ell\leq \ind_M(y)$. Hence
\begin{equation}\label{image-face}
f(F)=\bigcup_{y\in F}f(y)\sub \bigcup_{y\in F}E_y\sub F.
\end{equation}
If $n-i>\ell$, then
\[
f(x)\in f(M(x))\, \sub\;\, \Spann\!\!\! \bigcup_{F\in \cF(M(x),\ell)}\!\! f(F)
\, \sub\;\, \Spann\!\!\! \bigcup_{F\in \cF(M(x),\ell)}\!\!
E_F
\;\sub E_{M(x)}=E_x,
\]
using (b),
(\ref{image-face}),
and \ref{span-inside}. Thus~$f$ is a stratified vector field.
\end{proof}
\begin{cor}\label{vecfield-cor}
Let $E$ be a finite-dimensional
vector space, $n$ a positive integer,
$M\sub E$ be a simple $n$-polytope,
$\ell\in \{1,\ldots, n-1\}$
and $\cF$ be the set of $\ell$-faces of~$M$.
Let $\cV$ be the closed vector subspace
of
$\prod_{F\in \cF}C^\infty_{\str}(F,E)$
consisting of all $(f_F)_{F\in \cF}$ such that
$f_F|_{F\cap G}=f_G|_{F\cap G}$ for all $F,G\in \cF$.
Then the map
\[
R \colon C^\infty_{\str}(M,E)\to \cV,\quad f\mto (f|_F)_{F\in \cF}
\]
is continuous linear and has a continuous linear
right inverse $\tau\colon \cV\to C^\infty_{\str}(M,E)$.
\end{cor}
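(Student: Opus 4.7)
The idea is to obtain $\tau$ by restricting and corestricting the extension operator $\sigma$ from Theorem~\ref{thm-B} (applied with $Y:=E$ and $m=\infty$), using the additional property recorded in~\ref{add-on} to guarantee that the extension of a compatible family of stratified vector fields is again stratified.

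First I would dispatch the easy parts. Continuity and linearity of $R$ follow at once from \ref{basicpull}: each restriction $C^\infty(M,E)\to C^\infty(F,E)$, $f\mapsto f|_F$, is continuous linear, and $C^\infty_{\str}(F,E)$ carries the subspace topology from $C^\infty(F,E)$. Since every $(f_F)_{F\in\cF}\in\cV$ in particular satisfies the compatibility condition defining $\cE$ in Theorem~\ref{thm-B}, the space $\cV$ is a closed vector subspace of $\cE$, and the inclusion $\cV\hookrightarrow\cE$ is continuous linear.

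Next, invoke Theorem~\ref{thm-B} (with $Y=E$, $m=\infty$) together with the add-on in~\ref{add-on} to obtain a continuous linear right inverse $\sigma\colon\cE\to C^\infty(M,E)$ for $r$ satisfying
\[
\sigma(f)(N)\,\sub\;\Spann\!\!\!\bigcup_{F\in\cF(N,\ell)}\!\! f_F(F)
\]
for each face $N$ of $M$ with $\dim N>\ell$ and each $f=(f_F)_{F\in\cF}\in\cE$. Set $\tau:=\sigma|_\cV$. I then claim that $\tau(f)\in C^\infty_{\str}(M,E)$ for every $f\in\cV$, which is the heart of the argument. To verify this, apply Lemma~\ref{criterion-stratified} to the smooth function $\tau(f)\colon M\to E$: condition~(a) holds because $\sigma$ is a right inverse of $r$, so $\tau(f)|_F=f_F\in C^\infty_{\str}(F,E)$ for each $\ell$-face $F$ of~$M$; and condition~(b) holds because the span inclusion provided by~\ref{add-on} reads exactly
\[
\tau(f)(N)\,\sub\;\Spann\!\!\!\bigcup_{F\in\cF(N,\ell)}\!\! \tau(f)(F),
\]
using $\tau(f)(F)=f_F(F)$ for $F\in\cF(N,\ell)$. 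Lemma~\ref{criterion-stratified} then yields $\tau(f)\in C^\infty_{\str}(M,E)$.

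Finally, corestricting $\tau$ to $C^\infty_{\str}(M,E)$ (equipped with the subspace topology from $C^\infty(M,E)$) preserves continuity and linearity, and $R\circ\tau=\id_\cV$ follows directly from $r\circ\sigma=\id_\cE$. The only nontrivial input is the span property from~\ref{add-on}, which is already secured in the inductive proof of Theorem~\ref{thm-B}; once that is in hand, the rest is formal.
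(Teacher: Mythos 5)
Your proposal is correct and follows essentially the same route as the paper's own proof: restrict $\sigma$ from Theorem~\ref{thm-B} (with $Y=E$, $m=\infty$) to $\cV$, use the span property from \ref{add-on} together with Lemma~\ref{criterion-stratified} to show that $\sigma(f)$ is stratified, and corestrict to obtain $\tau$. There is no meaningful difference in strategy or in the lemmas invoked.
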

\begin{proof}
As a consequence of~\ref{basicpull},
$R$ is continuous and linear.
For $Y:=E$ and $m:=\infty$,
let $\cE\sub\prod_{F\in \cF}C^\infty(M,E)$
and $r\colon C^\infty(M,E)\to\cE$ be as in Theorem~B.
The theorem furnishes a continuous linear
right inverse $\sigma\colon \cE\to C^\infty(M,E)$ for~$r$;
we may assume that $\sigma$ satisfies the condition
described in~\ref{add-on}.
Note that $C^\infty_{\str}(F,E)\sub C^\infty(F,E)$
for each $F\in \cF$ and $\cV\sub \cE$.
We claim that
\[
\sigma(f)\in C^\infty_{\str}(M,E)
\]
for each $(f=f_F)_{F\in \cF}\in\cV$.
If this is true, then the co-restriction
$\tau\colon \cV\to C^\infty_{\str}(M,E)$,
$f\mto \sigma(f)$ is a continuous linear right inverse
for~$R$.
To prove the claim,
let $f=(f_F)_{F\in \cF}\in \cV$ and abbreviate $g:=\sigma(f)$.
For each $F\in \cF$, we have
\[
g|_F=f_F\in C^\infty_{\str}(F,E),
\]
whence $g$
satisfies condition (a)
of Lemma~\ref{criterion-stratified}
(with $g$ in place of~$f$). Moreover, $g(F)=f_F(F)\sub E_F$.
For each face $N$ of $M$ of dimension
$>\ell$, we have
\[
g(N)=\sigma(f)(N)\sub\Spann \bigcup_{F\in \cF(N,\ell)}f_F(F)
=\Spann\bigcup_{F\in \cF(N,\ell)}g(F),
\]
using \ref{add-on} for the inclusion.
Hence also condition~(b)
of Lemma~\ref{criterion-stratified}
is satisfied and thus $g\in C^\infty_{\str}(M,E)$.
\end{proof}
\noindent
Let $M$ be an $n$-polytope.
If $M\sub \R^n$, we can use $\id_M$
as a global chart to consider
$M$ as an $n$-dimensional
locally polyhedral manifold
in the sense of \cite{Glo23},
or an $n$-dimensional smooth manifold with rough boundary
in the sense of \cite{GN25}.
The latter are defined like manifolds
with corners, except that the $V_\phi$ in Definition~\ref{mfd-corners}
need to be replaced with locally convex subsets of~$\R^n$
with dense interior.
If~$M$ is an arbitary $n$-polytope, we can use
an affine diffeomorphism $f\colon \R^n\to\aff(M)$
to transport the manifold structure from $f^{-1}(M)$
to~$M$.
\begin{rem}\label{converse}
If $M$ is an $n$-polytope
which is not simple
(whence $n\geq 3$),
then the image
of $\rho$ is a proper subset of $\cE$
in Theorem~B (whence its conclusion becomes false)
whenever $Y\not=\{0\}$ and $\ell=1$.
Likewise, the image of $R$ is a proper
subset of $\cV$ in Corollary~\ref{vecfield-cor}
(whence its conclusion becomes false)
for $\ell=1$.
\end{rem}
\noindent
To see this, we may assume that $M$
is an $n$-polytope in $\R^n$.
We let $x_0$ be a vertex of~$M$
such that the number $m$ of edges
containing~$x_0$ exceeds~$n$.
Let $\cF$ be the set of all edges of~$M$
and $F_1,\ldots, F_m$
be the edges containing~$x_0$.
Let $x_j$ the other vertex of~$F_j$
for $J\in \{1,\ldots,m\}$.
After a permutation of the indices,
we may assume that
\[
x_m-x_0\in \Spann\{x_1-x_0,\ldots, x_{m-1}-x_0\},
\]
say $x_m-x_0=\sum_{j=1}^{m-1}\lambda_j(x_j-x_0)$
with $\lambda_1,\ldots, \lambda_{m-1}\in \R$.
For each $F\in \cF\setminus \{F_m\}$,
we let $f_F\in C^\infty_{\str}(F,\R^n)$
be the function $f_F=0$.
We let $h\colon f_{F_m}\colon F_m\to \R$
be a smooth function such that $h(x_0)=h(x_1)=0$
and $h(x_0+t(x-x_0))=t$ for small $t\geq 0$.
Concerning Theorem~\ref{thm-B},
we choose $v\in Y\setminus\{0\}$.
Concerning Corollary~\ref{vecfield-cor},
we choose $v:=x_m-x_0$.
Then $f_{F_m}(t):=h(t)v$ defines a function
$f_{F_m}\in C^\infty(F_m,Y)$,
respectively, a function
$f_{F_m}\in C^\infty_{\str}(F_m,\R^n)$.
Moreover, we have $f:=(f_F)_{F\in \cF}\in \cE$,
respectively, $f:=(f_F)_{F\in \cF}\in \cV$.
If we had $f=r(g)$ for
some $g\in C^\infty(M,Y)$
(or $g=R(g)$ for some $C^\infty_{\str}(M,\R^n)$),
then $g|_{F_j}=f_{F_j}=0$ for $j\in \{1,\ldots,m-1\}$,
whence
\[
dg(x_0,x_j-x_0)=0.
\]
Hence $dg(x_0,x_m-x_0)=\sum_{j=1}^{m-1}\lambda_j\, dg(x_0,x_j-x_0)=0$.
But
\[
\frac{d}{dt}\Big|_{t=0}g(x_0+t(x_m-x_0))=\frac{d}{dt}\Big|_{t=0}(tv)=v\not=0,
\]
contraction. Thus $g$ cannot exist.
\section{Proof of Theorem~\ref{thm-A} and Corollary~\ref{new-cor}}
We shall use a simple fact
(see, e.g., \cite[Corollary 1.7.13]{GN25}).
\begin{numba}\label{basicpush}
Let $U$ be a locally convex
subset with dense interior in a locally convex space~$E$.
Let $\alpha\colon F_1\to F_2$ be a continuous
linear map between locally convex spaces.
Then the following map is continuous and linear:
\[
\alpha_*\colon C^\infty(U,F_1)\to C^\infty(U,F_2),\quad
f\mto \alpha\circ f.
\]
\end{numba}
\begin{numba}
Let $n$ be a positive integer
and $M$ be an $n$-polytope
in $\R^n$.
Then
\[
\Omega_M:=\{\phi-\id_M\colon \phi\in \Diff^{\fr}(M)\}
\]
is an open subset of $C^\infty_{\str}(M,\R^n)$
and the map
\[
\theta_M\colon \Diff^{\fr}(M)\to\Omega_M,\quad \phi\mto\phi-\id_M
\]
is a bijection
which can be used as a global chart
for a smooth manifold structure on
$\Diff^{\fr}(M)$
making it a Lie group~\cite{Glo23};
the modeling space is $C^\infty_{\str}(M,\R^n)$.
There is a unique Lie group
structure on $\Diff(M)$
making $\Diff^{\fr}(M)$
an open submanifold (see \cite{Glo23}).
\end{numba}
\begin{numba}\label{DiffP}
If $E$ is a finite-dimensional vector
space and $M\sub E$ an $n$-polytope,
we choose an affine bijection
\[
A\colon \R^n\to \aff(M).
\]
Then $P:=A^{-1}(M)$ is an
$n$-polytope in $\R^n$;
we define $\Omega_P\sub C^\infty_{\str}(P,\R^n)$
and $\theta_P\colon \Diff^{\fr}(P)\to\Omega_P$
as before.
There are $b\in E$
and a linear map $\alpha\colon \R^n\to E$
such that
\[
A(x)=\alpha(x)+b\quad\mbox{for all $\,x\in \R^n$.}
\]
Then $A(\aff(F))=\aff(A(F))$ for each non-empty face
$F$ of $P$ and
\[
\alpha((\R^n)_F)=E_{A(F)}
\]
holds for the corresponding vector subspaces.
By the preceding, the map
\[
\beta\colon C^\infty_{\str}(M,E)\to C^\infty_{\str}(P,\R^n),\quad f\mto \alpha^{-1}\circ
f\circ A|_P
\]
is a bijection; it is an isomorphism
of topological vector spaces
as a consequence of \ref{basicpull}
and \ref{basicpush}.
The map
\[
C_A\colon \Diff(P)\to\Diff(M),\;\;
\phi\mto A\circ \phi\circ A^{-1}
\]
is an isomorphism of groups
which takes $\Diff^{\fr}(P)$ onto
$\Diff^{\fr}(M)$.
Let $c_A$ be its restriction
to an isomorphism $\Diff^{\fr}(P)\to\Diff^{\fr}(M)$.
If we define
\[
\Omega_M:=\{\phi-\id_M\colon \phi\in \Diff^{\fr}(M)\},
\]
then $\Omega_M$ is a subset of $C^\infty_{\str}(M,E)$
and the map
\[
\theta_M\colon \Diff^{\fr}(M)\to \Omega_M,\quad \phi\mto \phi-\id_M\}
\]
is a bijection. Then
\begin{equation}\label{commute}
\theta_M\circ c_A=   \beta\circ \theta_P,
\end{equation}
entailing that $\Omega_M$ is open in $C^\infty_{\str}(M,E)$.
If we give $\Diff^{\fr}(M)$ the smooth manifold
structure modeled on $C^\infty_{\str}(M,E)$
making $\theta_M$
a $C^\infty$-diffeomorphism, we deduce from~(\ref{commute})
that $c_A$ is a $C^\infty$-diffeomorphism,
whence the smooth manifold structure makes
$\Diff^{\fr}(M)$ a Lie group.
Using $C_A$, we can transport the Lie group
structure from $\Diff(P)$ to $\Diff(M)$
and obtain a Lie group structure on $\Diff(M)$
with $\Diff^{\fr}(M)$ as an open submanifold.
\end{numba}
\noindent
{\bf Proof of Theorem~\ref{thm-A}.}
Let $E$ be a finite-dimensional
vector space and an $n$-polytope $M\sub E$
as well as $\ell$, $\cF$, and $\rho$
be as in Theorem~A. Let $\Omega_M$
and $\theta_M$ be as in~\ref{DiffP}.
Let $\cV$, $R$, and its continuous linear right inverse~$\tau$
be as in Corollary~\ref{vecfield-cor}.
As the continuous linear map $R\colon C^\infty_{\str}(M,E)\to \cV$
has a continuous linear right inverse,
it is an open map. Hence $R(\Omega_M)$ is open
in $\cV$, showing that $R(\Omega_M)$ is a submanifold
of~$\cV$ and hence of $\prod_{F\in\cF}C^\infty_{\str}(F,E)$.
Being a continuous linear map with a continuous linear
right inverse, $R$ is a submersion
(in the sense of \cite[Definition~4.4.8]{Ham82}), and
hence also its restriction
$R|_{\Omega_M}\colon \Omega_M\to R(\Omega_M)$
to the open subset~$\Omega_M$ is a submersion.
Since
\[
\left(\prod_{F\in \cF}\theta_F\right)\circ \rho=R \circ\theta_M,
\]
the $C^\infty$-diffeomorphism $\theta:=\prod_{F\in \cF}\theta_F$
takes $\im(\rho)$ onto $R(\Omega_M)$.
Notably, $R(\Omega_M)$ is contained in the open subset
$\prod_{F\in \cF}\Omega_F$ of $\prod_{F\in\cF}C^\infty_{\str}(F,E)$,
whence $R(\Omega_M)$ can be regarded as a submanifold of
$\prod_{F\in\cF}\Omega_F$.
As $\theta$ is a $C^\infty$-diffeomor\-phism,
we infer that $\im(\rho)$ is a submanifold of
$\prod_{F\in \cF}\Diff^{\fr}(F)$
and that the submanifold structure
makes $\Theta:=\theta|_{\im(\rho)}\colon \im(\rho)\to R(\Omega_M)$
a $C^\infty$-diffeomorphism. Since $\Theta\circ \rho|^{\im(\rho)}=R\circ \theta_M$
is a submersion, also $\rho|^{\im(\rho)}\colon \Diff^{\fr}(M)\to \im(\rho)$
is a submersion. Since $\prod_{F\in \cF}\Diff^{\fr}(F)$
is a Lie group, its subgroup and submanifold $\im(\rho)$
also is a Lie group.
Recall that the connected component
of $\id_M$ in $\Diff^{\fr}(M)$
equals the connected component
$\Diff(M)_0$ of $\Diff(M)$ (cf.\ \cite{Glo23}).
If $\ell=1$,
then $\cV=\prod_{F\in \cF}C^\infty_{\str}(F,E)$,
whence $R(\Omega_M)$ is open in $\prod_{F\in \cF}C^\infty_{\str}(F,E)$.
As a consequence, $\im(\rho)=\theta^{-1}(R(\Omega_M))$
is open in $\prod_{F\in \cF}\Diff^{\fr}(F)=G$. Thus $\rho$
is open as a map to $G$.
Hence $\rho(\Diff(M)_0)$ is an open
subgroup of~$G$ and hence contains the identity component
$G_0$ of~$G$.
As $\rho(\Diff(M)_0)$ is connected,
it is contained in~$G_0$. Thus
$\rho(\Diff(M)_0)=G_0=\prod_{F\in \cF}\Diff(F)_0$. $\,\square$\\[2.3mm]
The following lemma can be proved using standard
arguments.
\begin{la}\label{from-open}
Let $G$ be a Lie group, $U$ be an open subgroup
of~$G$ and $N$ be a closed normal subgroup of~$G$.
Consider $U/(U\cap N)$ as a subset of $G/N$,
identifying $g(U\cap N)$ with $gN$ for $g\in U$.
Let $q\colon G\to G/N$, $g\mto gN$ be the canonical
map. If there exists a smooth manifold
structure on $U/U\cap N)$
turning $p:=q|_U\colon U\to U/(U\cap N)$
into a smooth submersion,
then $U/(U\cap N)$
is a Lie group and there exists
a unique Lie group structure on $G/N$
which makes $U/(U\cap N)$
an open submanifold. The latter makes
$q\colon G\to G/N$ a smooth submersion.
\end{la}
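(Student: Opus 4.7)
The plan is to produce the Lie group structure on $G/N$ in two stages: first make $H := U/(U \cap N)$ a Lie group in its own right, then transport that structure around $G/N$ by left translation. Since $N$ is closed in $G$, the quotient topology on $G/N$ is Hausdorff, so only a compatible smooth structure and the smoothness of the group operations remain to be verified.

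First I would show $H$ is a Lie group. The multiplications fit into $m_H \circ (p \times p) = p \circ m_U$, where $m_U\colon U\times U\to U$ is the restriction of the multiplication of $G$. As $p$ is a smooth submersion, so is $p \times p$; smooth submersions admit smooth local sections, so a map on the codomain is smooth iff its lift through the submersion is. Thus $m_H$ is smooth, and the analogous argument gives smoothness of inversion on $H$.

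Next I would cover $G/N$ by the sets $V_g := q(gU) = q(g)H$ for $g \in G$. Each $V_g$ is open in $G/N$ since $q^{-1}(V_g) = gUN$ is a union of left translates of the open subgroup $U$, and $q$ is an open map (as the orbit map for the right $N$-action). Being cosets of the subgroup $H$, the $V_g$'s are either equal or disjoint. Transport the manifold structure of $H$ onto $V_g$ through the bijection $\Lambda_g \colon H \to V_g$, $[u] \mapsto q(gu)$; it is a homeomorphism because $p$ is a quotient map and $q$ is open. When $V_g = V_{g'}$ one has $g^{-1}g' \in UN$, and the transition $\Lambda_g^{-1}\circ \Lambda_{g'}$ is left translation by a fixed element of $H$, hence a diffeomorphism of the Lie group $H$; the structures glue to a smooth manifold structure on $G/N$ in which $H$ sits as an open submanifold.

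To finish, smoothness of $q$ is local: in the chart $\Lambda_g$, the restriction $q|_{gU}$ identifies with $p$ via the diffeomorphism $U \to gU$ given by left multiplication, so $q$ is a submersion. Then $m_{G/N}\circ(q\times q) = q\circ m_G$ together with smoothness of $m_G$ and the submersion criterion yields smoothness of $m_{G/N}$, and the analogous argument handles inversion. Uniqueness is immediate: any Lie group structure on $G/N$ with $H$ as open submanifold has the same smooth structure on $H$, and its smooth left translations push this agreement onto every coset $V_g$. The main obstacle I anticipate is the bookkeeping with the $V_g$'s—since $H$ need not be normal in $G/N$, right translations do not visibly respect the chart cover, but working uniformly with left translations and then upgrading via the submersion $q$ sidesteps the issue.
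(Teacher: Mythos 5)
Your argument is correct, but it follows a genuinely different route from the paper's proof. Both proofs begin identically: the smoothness of multiplication and inversion on $H := U/(U\cap N)$ is obtained from the relation $m_H\circ(p\times p)=p\circ m_U$ (and its analogue for inversion) together with the fact that surjective smooth submersions detect smoothness. The divergence is in how the Lie group structure on $G/N$ is obtained.

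The paper verifies the hypotheses of a general extension lemma for local Lie group structures (\cite[Proposition 1.13]{GCX}, the analogue of Bourbaki, Chapter~III, \S1, no.\,9, Proposition~18): after establishing the Lie group structure on $H$, it checks that each conjugation $\beta_g\colon G/N\to G/N$ restricts to a smooth map from an identity neighbourhood $p(V)\sub H$ into $H$, by lifting $\beta_g\circ q=q\circ\alpha_g$ through the submersion $p$. The cited lemma then produces the unique Lie group structure on $G/N$ with $H$ as open submanifold, and the final step about $q$ being a submersion is handled via right translations. Your approach instead builds the global atlas explicitly: you cover $G/N$ by the left cosets $V_g=q(g)H$, transport the smooth structure from $H$ via $\Lambda_g$, compute that for overlapping charts the transition map $\Lambda_g^{-1}\circ\Lambda_{g'}$ is left translation by $[u_0]\in H$ (where $g^{-1}g'=u_0n_0$, and normality of $N$ gives $n_0uN=uN$), and then get smoothness of multiplication and inversion on $G/N$ from the submersion criterion applied to $q$, rather than from conjugation smoothness. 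Your route is self-contained and avoids invoking the abstract lemma, at the cost of spelling out the atlas; the paper's route offloads that bookkeeping to the cited result but must check the conjugation condition, which is invisible in your version because left translations always respect your chart cover. Both are valid in the infinite-dimensional locally convex setting in which the paper works.
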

\begin{proof}
The map $p$ is a surjective smooth submersion,
whence also the map $p\times p\colon U\times U\to U/(U\cap N)\times U/(U\cap N)$
is a surjective smooth submersion.
Let $\eta_U\colon U\to U$
and $\eta\colon U/(U\cap N)\to U/(U\cap N)$
be the mappings taking a group element to its inverse;
let $m_U\colon U\times U\to U$
and $m\colon U/(U\cap N)\times U/(U\cap N)\to U/(U\cap N)$
be the group multiplication.
Since $\eta\cap p=p\cap \eta_U$ is smooth
and $p$ is a surjective smooth submersion,
$\eta$ is smooth (see \cite[Exercise~1.7.6]{Sch23}).
Likewise, the smoothness of $m\circ (p\times p)=p\circ m_U$
implies that $m$ is smooth. Hence $U/(U\cap N)$
is a Lie group. Since $q$ is an open map,
$q(U)$ is open in $G/N$.
For $g\in G$,
the inner automorphism $G\to G$, $x\mto gxg^{-1}$
is smooth. Since $U$ is open,
we find an open identity neighbourhood $V\sub U$
such that $\alpha_g(V)\sub U$.
Consider the inner automorphism $\beta_g\colon G/N\to G/N$,
$xN\mto (gN)(xN)(gN)^{-1}$.
Then
\[
\beta_g\circ q=q\circ\alpha_g,
\]
whence $\beta_g(q(V))=q(\alpha_g(V))\sub q(U)$.
Here $q(V)=p(V)$ and $q(U)=U/(U\cap N)$.
By the preceding,
$\beta_g$ restricts to a map
$\beta_g|_{p(V)}\colon p(V)\to U/(U\cap N)$.
Since $p|_V\colon V\to p(V)$
is a surjective $C^\infty$-submersion and
$\beta_g|_{p(V)}\circ p|_V=p\circ \alpha_g|_V$
is smooth, also $\beta_g|_{p(V)}$
is smooth.
Hence $G/N$ has a unique
smooth manifold structure
making $U/(U\cap N)$ an open submanifold,
by the local description of
Lie group structures in
\cite[Proposition 1.13]{GCX}
(analogous to \cite[Chapter~III, \S1, no.\,9 Proposition~18]{Bou}).
Given $q\in G$, write $R_g\colon G\to G$, $x\mto gx$
and $R_{gN}\colon G/N\to G/N$, $xN\mto xNgN$. 
Since $q|_U$ is a submersion,
we deduce that
\[
R_{(gN)^{-1}}\circ q|_{gU}=
q|_U\circ R_{g^{-1}}|_{gU}
\]
is a submersion for each $g\in G$,
whence $q$ is a submersion.
\end{proof}
\noindent
{\bf Proof of Corollary~\ref{new-cor}.}
We know that $\Diff^{\fr}(M)$
is an open subgroup of the Lie group
$\Diff(M)$ (cf.\ \cite{Glo23}).
As each diffeomorphism of~$M$
leaves $\partial M$ invariant,
$\Diff^{\partial,\id}(M)$
is a normal subgroup of $\Diff(M)$.
Moreover, $\Diff^{\partial,\id}(M)$
is closed in $\Diff(M)$
as the point evaluations $\Diff(M)\to M$, $\psi\mto\psi(x)$
are continuous for all $x\in\partial M$.
Moreover, $\Diff^{\fr}(M)/\Diff^{\partial,\id}(M)$
admits a Lie group structure turning
the canonical quotient map
into a smooth submersion (see Remark~\ref{post-thm-B}).
Thus all hypotheses of Lemma~\ref{from-open}
are satisfied. $\,\square$
\section{Regularity of quotient groups}
We recall regularity properties
of Lie groups
and record an observation
concerning regularity of quotient
groups, which can then be applied
to quotients of
diffeomorphism groups of polytopes.
\begin{numba}
Let $G$ be a Lie group modeled on a locally
convex space, with neutral element~$e$
and Lie algebra $\cg:=L(G)$.
For $g\in G$, the right translation
$R_g\colon G\to G$, $x\mto xg$
is smooth. We get a right action of
$G$ on its tangent bundle $TG$ via
\[
TG\times G\to TG,\quad (v,g)\mto TR_g(v)=: v.g.
\]
If $\eta\colon [0,1]\to G$ is a $C^1$-curve,
let
\[
\delta(\eta)\colon [0,1]\to\cg,\quad t\mto \dot{\eta}(t).\eta(t)^{-1}
\]
be its right logarithmic derivative.
For a continuous curve $\gamma\colon [0,1]\to\cg$,
there is at most one $C^1$-curve
$\eta\colon [0,1]\to G$ such that
\begin{equation}\label{def-evol}
\delta(\eta)=\gamma\quad\mbox{and}\quad \eta(0)=e
\end{equation}
(cf.\ \cite{Glo16,Nee06}).
If $\eta$ exists, it is called the \emph{evolution} of~$\gamma$
and we write $\Evol(\gamma):=\eta$.
Endow $C([0,1],G)$
with its natural smooth Lie group
structure modeled on $C([0,1],\cg)$.
\end{numba}
\begin{numba}
Let $k\in \N_0\cup\{\infty\}$.
The Lie group~$G$ is called \emph{$C^k$-semiregular}
if $\Evol(\gamma)$ exists for each $\gamma\in C^k([0,1],\cg)$.
If, moreover, $\Evol\colon C^k([0,1],\cg)\to C([0,1], G)$
is smooth, then $G$ is called \emph{$C^k$-regular}
(cf.\ \cite{Glo16}).
It is clear from the definition that $C^k$-regularity implies
$C^\ell$-regularity for all $\ell\geq k$.
Thus $C^\infty$-regularity (also simply called \emph{regularity})
is the weakest condition. For Lie groups with sequentially
complete modeling spaces, the concept goes back to~\cite{Mil84}.
For $p\in \{p\}\cup [1,\infty[$
a Lie group $G$
with sequentially complete modeling space
is called \emph{$L^p$-semiregular}
if an evolution $\Evol(\gamma)$
exists for each $\gamma\in L^p([0,1],\cg)$,
looking now for Carath\'{e}odory
solutions $\eta\colon [0,1]\to G$ to (\ref{def-evol})
which are merely absolutely
continuous. If, moreover, $\Evol\colon L^p([0,1],\cg)\to C([0,1],G)$
is smooth, then $G$ is called \emph{$L^p$-regular}.
If $G$ is $L^p$-regular, then $G$ is $L^q$-regular for all $q\geq p$
and $C^0$-regular. See \cite{Glo15,Nik21,GH23} for details.
\end{numba}
\noindent
Notably, each $L^1$-regular Lie group~$G$
is $C^0$-regular.\footnote{The evolution map
$\Evol\colon L^1([0,1],L(G))\to C([0,1],G)$
restricts to a smooth map $C([0,1],L(G))\to C([0,1],G)$
which is the evolution map on continuous curves.}
\begin{la}\label{cheap-reg}
Let $\alpha\colon G\to H$
be a smooth group homomorphism
between Lie groups modeled
on locally convex spaces
such that
\[
L(\alpha)\colon L(G)\to L(H)
\]
admits a continuous linear right inverse.
Then the following holds:
\begin{itemize}
\item[\rm(a)]
If $k\in \N_0\cup\{\infty\}$
and $G$ is $C^k$-semiregular,
then also $H$ is $C^k$-semiregular.
\item[\rm(b)]
If $k\in \N_0\cup\{\infty\}$
and $G$ is $C^k$-regular,
then also $H$ is $C^k$-regular.
\item[\rm(c)]
If $G$ and $H$ are modeled
on sequentially complete
locally convex spaces, $p\in \{\infty\}\cup [1,\infty[$
and $G$ is $L^p$-semiregular,
then also $H$ is $L^p$-semiregular.
\item[\rm(d)]
If $G$ and $H$ are modeled
on sequentially complete
locally convex spaces, $p\in \{\infty\}\cup [1,\infty[$
and $G$ is $L^p$-regular,
then also $H$ is $L^p$-regular.
\end{itemize}
\end{la}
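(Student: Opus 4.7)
The plan is to transport evolutions from $G$ to $H$ via a continuous linear right inverse $\beta\colon L(H)\to L(G)$ of $L(\alpha)$, using naturality of the right logarithmic derivative. The key formula is that for any $C^1$-curve $\eta\colon [0,1]\to G$,
\[
\delta(\alpha\circ\eta) = L(\alpha)\circ \delta(\eta),
\]
which follows from the chain rule together with the fact that $\alpha$ is a homomorphism (so the right translations in $H$ correspond to those in $G$ under $T\alpha$). I would first verify this identity as a standalone lemma, since all four parts rest on it.

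For (a), given $\gamma\in C^k([0,1],L(H))$, set $\widetilde\gamma:=\beta\circ\gamma\in C^k([0,1],L(G))$, which is admissible since $\beta$ is continuous linear. By $C^k$-semiregularity of $G$, the evolution $\eta:=\Evol_G(\widetilde\gamma)\colon [0,1]\to G$ exists and is $C^1$. Define $\zeta:=\alpha\circ\eta\colon [0,1]\to H$; then $\zeta(0)=e_H$ and by the naturality identity,
\[
\delta(\zeta) = L(\alpha)\circ\delta(\eta)=L(\alpha)\circ\widetilde\gamma = L(\alpha)\circ\beta\circ\gamma=\gamma.
\]
Thus $\Evol_H(\gamma)$ exists and equals $\alpha\circ\Evol_G(\beta\circ\gamma)$. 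For (b), the same construction gives a factorisation
\[
\Evol_H = \alpha_*\circ \Evol_G\circ \beta_*,
\]
where $\beta_*\colon C^k([0,1],L(H))\to C^k([0,1],L(G))$ and $\alpha_*\colon C([0,1],G)\to C([0,1],H)$ are the pushforward maps. Both are smooth (the first as a continuous linear map between locally convex spaces; the second by \ref{basicpush} applied pointwise, or rather its well-known analogue for smooth maps between manifolds into Lie groups), and $\Evol_G$ is smooth by hypothesis, so $\Evol_H$ is smooth.

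For (c) and (d), the same scheme applies with $C^k$ replaced by $L^p$. The only points requiring care are that: (i) $\beta_*$ maps $L^p([0,1],L(H))$ continuously linearly into $L^p([0,1],L(G))$, which is immediate since $\beta$ is continuous linear and sequential completeness of the modeling spaces guarantees the Bochner $L^p$ spaces behave as needed; (ii) the Carath\'eodory solution $\eta=\Evol_G(\beta\circ\gamma)$ is absolutely continuous into $G$ and satisfies $\delta(\eta)=\beta\circ\gamma$ a.e.; (iii) $\alpha\circ\eta$ is then absolutely continuous into $H$ with $\delta(\alpha\circ\eta)=\gamma$ a.e., using the chain rule for absolutely continuous curves composed with smooth maps (see \cite{Glo15,Nik21,GH23}). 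Then $\Evol_H(\gamma)=\alpha\circ\Evol_G(\beta\circ\gamma)$ as before, and smoothness of $\Evol_H$ follows from the same factorisation.

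The main obstacle is (iii): justifying rigorously that composing a Carath\'eodory solution with a smooth homomorphism yields another Carath\'eodory solution in the sense of \cite{Glo15,Nik21}. This is handled by working in charts: the map $\alpha$ is smooth, absolute continuity is preserved, and the a.e.\ identity for the right logarithmic derivative follows from the a.e.\ existence of $\dot\eta$. Once this point is dispatched, (a)--(d) follow uniformly from the single factorisation $\Evol_H=\alpha_*\circ\Evol_G\circ\beta_*$.
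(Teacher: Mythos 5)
Your proposal is correct and follows essentially the same route as the paper's proof: pick a continuous linear right inverse $\sigma$ of $L(\alpha)$, use the naturality of the right logarithmic derivative $\delta(\alpha\circ\eta)=L(\alpha)\circ\delta(\eta)$ (the paper cites \cite[Proposition~II.4.1\,(1)]{Nee06} for this), conclude $\Evol_H=\alpha_*\circ\Evol_G\circ\sigma_*$, and observe that this factorisation is smooth whenever $\Evol_G$ is. The paper handles (c) and (d) with the same brevity you do, simply replacing $C^k$ with $L^p$, so your extra discussion of the Carath\'eodory-solution subtleties goes somewhat beyond the paper but does not change the argument.
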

\begin{proof}
Let $\sigma\colon L(H)\to L(G)$
be a continuous linear right inverse
for $L(\alpha)$. (a) and (b): The map
\[
\sigma_*\colon C^k([0,1],L(H))\to C^k([0,1],L(G)),\quad \gamma\mto\sigma\circ\gamma
\]
is continuous and linear.
If $G$ is $C^k$-semiregular
and $\gamma\in C^k([0,1],L(G))$, then $\sigma\circ\gamma\in C^k([0,1],L(H))$
has an evolution $\eta:=\Evol_G(\sigma\circ \gamma)\colon [0,1]\to G$.
The left logarithmic derivative of $\alpha\circ\eta$ is
\[
\delta(\alpha\circ \eta)=L(\alpha)\circ \delta(\eta)=L(\alpha)\circ \sigma\circ \gamma=
\gamma
\]
(cf.\ \cite[Proposition~II.4.1\,(1)]{Nee06}).
Moreover, $(\alpha\circ\eta)(0)=\alpha(\eta(0))=\alpha(e_G)=e_H$.
Thus $\alpha\circ\eta=\Evol_H(\gamma)$ and
\[
\alpha_*\circ \Evol_G\circ \, \sigma_*=\Evol_H,
\]
using the mapping $\alpha_*\colon C([0,1],G)\to C([0,1],H)$,
$\zeta\mto\alpha\circ \zeta$ which is smooth
(see, e.g., \cite[Corollary 1.22]{AGS20}).
If $G$ is $C^k$-regular, then $\Evol_G$
is smooth and hence also $\Evol_H=\alpha_*\circ \Evol_G\circ \sigma_*$.\\[2.3mm]
The proof of (c) and (d) is analogous,
as $\sigma_*\colon L^p([0,1],L(H))\to L^p([0,1],L(G))$,
$[\gamma]\mto [\sigma\circ\gamma]$
is continuous linear. We need only replace the symbol
$C^k$ with $L^p$
and $C^k$-functions with equivalence
classes of $\cL^p$-functions.
\end{proof}
\begin{prop}\label{is-regular}
In the situation of Theorem~{\rm\ref{thm-A}},\vspace{.6mm}
the Lie group $\im(\rho)\cong$\linebreak
$\Diff^{\fr}(M)/\ker(\rho)$
is $L^1$-regular and hence $C^k$-regular for each
$k\in \N_0\cup\{\infty\}$.
\end{prop}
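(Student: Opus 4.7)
The plan is to combine the transfer principle of Lemma~\ref{cheap-reg} with $L^1$-regularity of the ambient diffeomorphism group. First I would identify $\im(\rho) \cong \Diff^{\fr}(M)/\ker(\rho)$ as a Lie group via the submanifold structure from Theorem~\ref{thm-A}, and note that by \ref{DiffP} its modeling space embeds as a closed subspace of the Fr\'{e}chet space $\prod_{F\in\cF}C^\infty_{\str}(F,E)$, hence $\im(\rho)$ is modeled on a Fr\'{e}chet (in particular sequentially complete) locally convex space, as is $\Diff^{\fr}(M)$ itself.

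Second, I would record that $\Diff^{\fr}(M)$ is $L^1$-regular. This is the key input from outside the excerpt: it is the content of the construction of $\Diff^{\fr}(M)$ as a Lie group in~\cite{Glo23}, combined with the $L^1$-regularity framework of~\cite{Glo15, Nik21} for diffeomorphism groups of compact manifolds (with corners, in our case). Granting this, $\Diff^{\fr}(M)$ is in particular $C^k$-regular for every $k \in \N_0 \cup \{\infty\}$.

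Third, I would verify the hypothesis of Lemma~\ref{cheap-reg} for the smooth group homomorphism $\alpha := \rho|^{\im(\rho)}\colon \Diff^{\fr}(M) \to \im(\rho)$. Under the chart $\theta_M$ of~\ref{DiffP} and the product chart $\prod_{F\in\cF}\theta_F$, the map $\alpha$ is identified with (the co-restriction of) the continuous linear restriction map $R\colon C^\infty_{\str}(M,E) \to \cV$ of Corollary~\ref{vecfield-cor}. Taking the derivative at the identity of~$\Diff^{\fr}(M)$, one sees that $L(\alpha)$ agrees with~$R$ up to the canonical identifications $L(\Diff^{\fr}(M)) \cong C^\infty_{\str}(M,E)$ and $L(\im(\rho)) \cong \cV$. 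Corollary~\ref{vecfield-cor} supplies a continuous linear right inverse $\tau\colon \cV \to C^\infty_{\str}(M,E)$ for $R$, hence for $L(\alpha)$.

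Finally, I would invoke Lemma~\ref{cheap-reg}\,(d) with $p=1$ to conclude that $\im(\rho)$ is $L^1$-regular, and then use the remark preceding Lemma~\ref{cheap-reg} (that $L^1$-regularity implies $C^0$-regularity, and hence $C^k$-regularity for all $k \in \N_0 \cup \{\infty\}$) to obtain the second assertion. The main obstacle, and the only nontrivial external input, is the $L^1$-regularity of $\Diff^{\fr}(M)$ itself; everything else is a direct combination of Theorem~\ref{thm-A}, Corollary~\ref{vecfield-cor}, and Lemma~\ref{cheap-reg}.
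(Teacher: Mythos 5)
Your proposal is correct and follows essentially the same route as the paper: both invoke the $L^1$-regularity of $\Diff^{\fr}(M)$ from~\cite{Glo23} and then apply Lemma~\ref{cheap-reg}\,(d). The only cosmetic difference is how you establish that $L(\rho)$ admits a continuous linear right inverse: you unwind the charts $\theta_M$, $\prod_F\theta_F$ to identify $L(\rho)$ with $R$ and explicitly produce $\tau$ from Corollary~\ref{vecfield-cor}, whereas the paper simply appeals to the general fact that the derivative of a submersion between Lie groups at the identity has a continuous linear right inverse — the underlying mechanism is the same, since $\tau$ is precisely what makes $\rho$ a submersion in Theorem~\ref{thm-A}.
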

\begin{proof}
The map $\rho\colon \Diff^{\fr}(M)\to\im(\rho)$
discussed in Theorem~\ref{thm-A} is a smooth
group homomorphism between
Lie groups and a submersion.
Hence $L(\rho)=T_e(\rho)$
has a continuous linear right inverse
(see \cite[ 1.56]{Sch23}).
Since
$\Diff^{\fr}(M)$
is $L^1$-regular (cf.\ \cite[Remark 1.3]{Glo23}),
Lemma~\ref{cheap-reg}\,(d)
shows that $\im(\rho)$ is $L^1$-regular
and thus $C^k$-regular for all $k\in \N_0\cup\{\infty\}$.
\end{proof}

\section{Controllability on simple polytopes}\label{sect:control}

We will now consider sufficient conditions for when we can generate all diffeomorphisms with compositions of flows of vector fields. Our approach will be to show controllability locally and then combine local results together, taking advantage of the fact that $M$ is compact.

As flows of vector fields on polytopes are non-standard, we shall first establish their properties from the regularity of the Lie group $\Diff^{\fr}(M)$:

\begin{numba}
Let $M$ be a polytope with dense interior embedded in $\R^n$. Following the construction of the regular Lie group $\Diff^{\fr}(M)$ in \cite{Glo23}, the group $\Diff^{\fr}(M)$ is an open subset of the affine subspace $\id_M + C^\infty_{\mathrm{str}} (M,\R^n)$, hence a submanifold of $C^\infty (M,\R^n)$. Let $\gamma \in C^k([0,1],T_{\id}\Diff^{\fr}(M))$ be a curve into the tangent space at the identity. Since $\Diff^{\fr}(M)$ is $C^k$-regular, \cite{Glo23}, its right-evolution $\eta := \Evol(\gamma)$ exists and solves the differential equation $\dot{\eta} = T_{\id} R_{\eta} (\gamma)$.
We will now identify the right evolution of $\gamma$.

The standard calculations for tangent spaces of manifolds of mappings (see e.g. \cite[Appendix C]{Sch23} or \cite[Appendix A]{AGS20}) yield for $g \in \Diff^{\fr}(M)$
$$T_g \Diff^{\fr}(M) = T_g C^\infty_{\text{str}} (M,\R^n) = \{X \circ g\mid X \in C^\infty_{\text{str}}(M,\R^n)\} \cong \{g\} \times C^\infty_{\text{str}} (M,\R^n).$$
Indeed, the identification of the tangent space is induced by the continuous linear right translation $r_g \colon C^\infty (M,\R^n) \rightarrow C^\infty (M,\R^n),\ f\mapsto f \circ g$. For $g\in \Diff^{\fr}(M)$ this map restricts to the right multiplication $R_g \colon \Diff^{\fr}(M) \rightarrow \Diff^{\fr}(P)$, whence the tangent maps of $R_g$ are 
$$T_h R_g \colon \{h\} \times C^\infty_{\text{str}} (M,\R^n) \rightarrow \{h \circ g\} \times C^\infty_{\text{str}} (M,\R^n), \quad T_h R_g (h,f)=(h\circ g, f \circ g).$$
Identify $T_{\id} \Diff^{\fr}(M)\cong C^\infty_{\text{str}}(M,\R^n)$ to obtain a formula for the right evolution of a curve $\gamma \in C^k([0,1],C^\infty_{\textrm{str}}(M,\R^n))$ viewed as a time dependent vector field on $M$. Recall that the point evaluation $\ev_x \colon C^\infty_{\rm str}(M,\R^n) \rightarrow \R^n, g \mapsto  g(x)$ is continuous linear for every $x\in M$. Hence, we obtain for each $x\in M$,
$$\dot{\eta}(t)(x)=\ev_x (\dot{\eta}(t))=\ev_x \left(T_{\id} R_{\eta(t)}(\gamma(t))\right) = \gamma(t)(\eta(t)(x)).$$
In other words we see that $\Evol(\gamma)(t)(x)=\Fl_t^\gamma(x)$ for every $x\in M$, where $\Fl_t^\gamma$ is the flow of the (time dependent) vector field $\gamma$. Hence the evolution of the curve $\gamma$ coincides with the flow of the vector field $\gamma$.
In particular, flows of stratified vector fields give rise to face respecting diffeomorphisms.
 \end{numba}
 Before we recall a result from \cite[Proposition~4.1]{AgCa09} let us define the following notation: For any neighbourhood $V$ of zero, we write $C^\infty(V,\mathbb{R}^n;0)$ for the space of smooth functions $V \rightarrow \R^n$ mapping zero to itself.
\begin{la} \label{lem:Xn}
If $X_1,\dots, X_n$ are vector fields on $\mathbb{R}^n$ such that
$$\spn \{ X_1(0), \dots, X_n(0) \} = T_0 \mathbb{R}^n,$$
then there exists a relatively compact neighbourhood $U$ of $0$ and a neighbourhood $\mathscr{U}$ of $C^\infty(U,\mathbb{R}^n;0)$, such that any $F \in \mathscr{U}$ can be written as
$$\psi = e^{f_1 X_1} \circ e^{f_2 X_2} \circ \cdots \circ e^{f_n X_n}|_U, \qquad f_j \in C^\infty(\mathbb{R}^n), \qquad f_j(0) = 0.$$
\end{la}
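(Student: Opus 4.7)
My plan is to derive the lemma from an inverse function theorem applied to the composition map
\[
\Phi(f_1,\ldots,f_n) \;:=\; e^{f_1 X_1}\circ\cdots\circ e^{f_n X_n},
\]
viewed as a map from a neighborhood of $(0,\ldots,0)$ in the Fr\'{e}chet space $\{f\in C^\infty(\R^n,\R):f(0)=0\}^n$ into a neighborhood of $\id_U$ in $C^\infty(U,\R^n;0)$. Since $X_1(0),\ldots,X_n(0)$ span $T_0\R^n$, I would first shrink $U$ to a relatively compact open neighborhood of~$0$ on which the matrix $A(x):=[X_1(x)\mid\cdots\mid X_n(x)]\in\R^{n\times n}$ remains invertible; continuity of~$A$ makes this possible, and then $x\mapsto A(x)^{-1}$ is smooth on $\overline U$.

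Smoothness of $\Phi$ on a neighborhood of the origin follows by combining regularity of the diffeomorphism group with smoothness of composition: each flow map $f_j\mapsto e^{f_j X_j}$ is smooth between function spaces (cf.\ the discussion of $\Evol$ preceding this lemma together with \cite{Glo23}), and composition of diffeomorphisms is smooth in this setting. A direct first-order Taylor expansion then yields
\[
D\Phi(0)(h_1,\ldots,h_n)(x)\;=\;\sum_{j=1}^n h_j(x)\,X_j(x),
\]
because $e^{t h_j X_j}(y)=y+t\,h_j(y)X_j(y)+O(t^2)$, and the higher-order cross terms produced by composing $n$ such flows vanish at the origin. A continuous linear right inverse to $D\Phi(0)$ is then provided by $Y\mapsto A^{-1}Y$: for a smooth map $Y\colon U\to\R^n$ with $Y(0)=0$, the components $h_j(x):=(A(x)^{-1}Y(x))_j$ are smooth, satisfy $h_j(0)=0$ automatically since $Y(0)=0$, and solve $\sum_j h_j X_j=Y$. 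In particular $D\Phi(0)$ is a continuous linear surjection admitting a continuous linear section.

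The final step, which is also the main obstacle, is to invoke an inverse function theorem in this Fr\'{e}chet setting, where the classical Banach version does not apply directly. I would use the Nash--Moser theorem in Hamilton's formulation: the tameness hypotheses for $\Phi$ (expressible in terms of ODE-flow estimates on $C^k$-norms, uniform over the compact set $\overline U$) and for the right inverse $Y\mapsto A^{-1}Y$ (multiplication by a fixed smooth matrix, which preserves tame gradings) can be verified by standard but delicate computations. An alternative avoiding Nash--Moser, more in the spirit of the original source, is to apply the classical Banach inverse function theorem in a $C^k$-completion for each $k$, obtain a $C^k$-decomposition $\psi=\Phi(f_1,\ldots,f_n)$, and then bootstrap: by local uniqueness of the $C^k$-inverse, the $C^k$ and $C^{k+1}$ preimages must agree on the intersection of their (open) neighborhoods of $\id_U$, so the $f_j$ are in $C^{k+1}$ whenever $\psi$ is; iterating in~$k$ yields the required smoothness.
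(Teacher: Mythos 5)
The paper does not actually prove this lemma: it is quoted verbatim from \cite[Proposition~4.1]{AgCa09}, so there is no internal proof to compare against. Your attempt is therefore a genuine reconstruction, and the linear analysis at the origin is correct — the derivative $D\Phi(0)(h_1,\ldots,h_n)=\sum_j h_j X_j$ is right, and so is the continuous linear right inverse $Y\mapsto A(\cdot)^{-1}Y$ obtained after shrinking $U$ so that $A(x)=[X_1(x)\mid\cdots\mid X_n(x)]$ stays invertible. However, the final step — invoking an inverse function theorem — has two concrete gaps.

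First, for Nash--Moser in Hamilton's form you must exhibit a \emph{tame family} of right inverses of $D\Phi(f)$ for all $(f_1,\ldots,f_n)$ in a neighborhood of $0$, not merely at $f=0$. Your proposal only computes $D\Phi(0)$. For $f\neq 0$ the derivative is a sum of terms of the form $T(e^{f_1X_1}\circ\cdots\circ e^{f_{j-1}X_{j-1}})\bigl[(\delta_{h_j} e^{f_jX_j})\circ e^{f_{j+1}X_{j+1}}\circ\cdots\bigr]$, which involves pushforwards of the $X_j$ by composite flows; producing a uniformly tame right inverse for this operator is a nontrivial additional computation that the argument, as sketched, does not supply. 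Second, the proposed bootstrap through Banach $C^k$-spaces does not go through, because of loss of derivatives: the flow map $f\mapsto e^{fX}$ with $X$ smooth is $C^k\to C^k$ as a set map, but its Fr\'{e}chet derivative in $f$ takes values in $C^{k-1}$ (the linearized flow equation has coefficient $D(fX)\in C^{k-1}$), so $\Phi$ is not $C^1$ as a map between $C^k$ Banach spaces and the classical Banach IFT does not apply at any fixed $k$. Consequently the ``local uniqueness of the $C^k$-inverse'' you appeal to is vacuous, since no $C^k$-inverse is produced in the first place.

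For orientation: the proof in Agrachev--Caponigro sidesteps the hard implicit-function analysis by an explicit inductive scheme. After choosing coordinates so that $X_j(0)=\partial_j$, one peels off one coordinate direction at a time, determining $f_j$ at each stage by what is essentially a parametrized \emph{finite-dimensional} implicit function theorem, with uniform control over the neighborhood $\scrU$. This is why no Nash--Moser machinery (and no verification of tameness of the full nonlinear right inverse) is needed. If you want a self-contained proof rather than a citation, that is the route to follow; otherwise, the honest statement here is simply to cite \cite[Proposition~4.1]{AgCa09}, as the paper does.
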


Recall the following result \cite[Lemma 3.4]{GS24}, where in the formulation $(y_1, \dots, y_n)$ denotes the standard coordinates on $\R^n$.
\begin{la} \label{lemma:varphi}
Let $\hat Z$ be a vector field on $\R^n$ such that $dy_n(\hat Z)(0) \neq 0$. Then there exists a function $u: \R^n \to \R$ of compact support such that
\begin{enumerate}[\rm (a)]
\item $u|_V =1$ for some neighbourhood $V$ of $0$.
\item If $\hat g \in C^\infty(\mathbb{R}^n)$ is a function such that $e^{\hat gu \hat Z}$ preserves $\{ y_n =0\} \cap V$, then on the support of $u$ we can write $\hat g(x) = y_n g(x)$ for some $g \in C(\R^n)$.
\end{enumerate}
\end{la}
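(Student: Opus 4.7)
The plan is to straighten $\hat Z$ using the flow-box theorem, derive vanishing of $\hat g$ on $\{y_n=0\}$ from the preservation hypothesis, and conclude via Hadamard's lemma. Since $dy_n(\hat Z(0))\neq 0$, the vector field $\hat Z$ is transverse to $\{y_n=0\}$ at $0$, so a flow-box construction gives a local diffeomorphism $\Psi$ around $0$ sending $\hat Z$ to $\partial/\partial y_n$ while preserving $\{y_n=0\}$ setwise. I would work in these straightened coordinates, in which the flow of $\hat g\varphi\hat Z$ starting at $(x',0)$ remains on the vertical line $\{y'=x'\}$ and evolves by the scalar ODE $\dot z(t)=\hat g(x',z(t))\varphi(x',z(t))$ with $z(0)=0$.

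I would take $V$ to be a small open neighborhood of $0$ inside the straightening domain and let $\varphi$ be a smooth compactly supported bump function with $\varphi\equiv 1$ on $V$, the shape of $\Supp(\varphi)$ to be designed in the last step. Given $\hat g\in C^\infty(\R^n)$ such that $e^{\hat g\varphi\hat Z}$ preserves $\{y_n=0\}\cap V$, interpreting the preservation as invariance under the full one-parameter family (equivalently, tangency of $\hat g\varphi\hat Z$ to the hyperplane) forces $z\equiv 0$ to be an equilibrium of the above ODE for each $(x',0)\in V$. Evaluating at $t=0$ yields $\hat g(x',0)\varphi(x',0)\,dy_n(\hat Z(x',0))=0$, and since $\varphi\equiv 1$ on $V$ and $dy_n(\hat Z)\neq 0$ on $V$ for $V$ small enough, this gives $\hat g=0$ on $V\cap\{y_n=0\}$.

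With $\hat g$ vanishing on the open set $V\cap\{y_n=0\}$ in the hyperplane---and, provided $\varphi$ is arranged so that $\Supp(\varphi)\cap\{y_n=0\}$ lies in the closure of $V\cap\{y_n=0\}$, also on the full hyperplane-intersection of $\Supp(\varphi)$ by continuity of $\hat g$---Hadamard's lemma applied along the $y_n$-direction gives
\[
\hat g(x',y_n)=y_n\int_0^1(\partial_{y_n}\hat g)(x',ty_n)\,dt.
\]
Setting $g(x):=\int_0^1(\partial_{y_n}\hat g)(x',ty_n)\,dt$ produces a smooth, hence continuous, function on $\R^n$ with $\hat g=y_n g$ on $\Supp(\varphi)$, as required.

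The hard part is the construction of $\varphi$ in the second step: I need $\Supp(\varphi)\cap\{y_n=0\}$ to lie inside the closure of $V\cap\{y_n=0\}$ so that the tangency conclusion on $V\cap\{y_n=0\}$ propagates to the entire hyperplane-trace of the support by continuity. A naive product or radial bump always has a transition zone on the hyperplane strictly exceeding this closure, so the proof requires shaping $\varphi$ so that its $1$-to-$0$ transition happens essentially off the hyperplane---for example by making the support very thin in the $y_n$-direction relative to its extent along $V\cap\{y_n=0\}$---or by exploiting the $\hat Z$-flow to link points in the transition region back to $V\cap\{y_n=0\}$ where the tangency is already established. This is the only non-routine step and dictates the precise form of $\varphi$.
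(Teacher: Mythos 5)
The paper does not give a proof of this lemma; it is quoted as \cite[Lemma~3.4]{GS24}. So your attempt has to be judged on its own. Your overall strategy is the natural one and almost certainly the intended one: straighten $\hat Z$ to $\partial_{y_n}$ by a flow-box near $0$ keeping $\{y_n=0\}$ fixed, use the scalar autonomous ODE along the vertical trajectories to force $\hat g\varphi=0$ on $\{y_n=0\}$ from the preservation hypothesis, and finish with Hadamard's lemma in the $y_n$-direction.

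However, the gap you flag at the end is real, and neither of the two repairs you sketch can work. From the preservation hypothesis you only obtain $\hat g(x',0)=0$ for $(x',0)\in V$, while Hadamard requires $\hat g(x',0)=0$ for every $x'$ in the \emph{shadow} $\pi(\Supp\varphi)$ of the support onto $\{y_n=0\}$. Your first idea, to shape $\varphi$ so that $\Supp(\varphi)\cap\{y_n=0\}$ (and more to the point its shadow) lies in $\overline{V\cap\{y_n=0\}}$, is impossible: if $\varphi$ is continuous, compactly supported, and $\equiv 1$ on the open set $V$, then on $\{y_n=0\}$ its transition annulus where $0<\varphi<1$ necessarily protrudes strictly beyond $\overline{V\cap\{y_n=0\}}$ in the $x'$-direction; making $\Supp(\varphi)$ thin in $y_n$ changes nothing about this $x'$-shadow. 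Your second idea, to ``exploit the $\hat Z$-flow to link points in the transition region back to $V\cap\{y_n=0\}$,'' also fails: in straightened coordinates the flow of $\hat Z$ (and of $\hat g\varphi\hat Z$) moves points only in the $y_n$-direction, so it never connects distinct $x'$-slices and in particular cannot transport the vanishing information from $\pi(V)$ to the rest of the shadow. Without a further idea there is in fact a counterexample to your intermediate claim: in $\R^2$, with $\hat Z=\partial_{y_2}$, $V=(-1,1)^2$ and $\hat g(y_1,y_2)=\chi(y_1)$ for a bump $\chi$ vanishing on $[-1,1]$ but not identically zero on $\Supp(\varphi)\cap\{y_2=0\}$, the preservation hypothesis on $V$ holds while $\hat g\neq y_2 g$ on $\Supp(\varphi)$.

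What rescues the lemma is a \emph{specific} construction of $\varphi$ rather than an arbitrary bump, combined with reading the tangency argument in the right place. In the straightened coordinates $(x',y_n)$, take $\varphi(x',y_n)=\alpha(x')\beta(y_n)$ with $\alpha,\beta\geq 0$ compactly supported bumps, $\alpha(0)=\beta(0)=1$. Then $\Supp\varphi=\Supp\alpha\times\Supp\beta$, and crucially the set $\{x':\varphi(x',0)\neq 0\}=\{\alpha\neq 0\}$ has closure exactly $\Supp\alpha=\pi(\Supp\varphi)$. The scalar ODE argument (which you gave correctly) shows that wherever $(\hat g\varphi\hat Z)$ is nonzero along $\{y_n=0\}$ the time-one map moves the point off the hyperplane; preservation therefore forces $\hat g(x',0)\varphi(x',0)=0$, hence $\hat g(x',0)=0$ on $\{\alpha\neq 0\}$ and, by continuity, on all of $\Supp\alpha$. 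Now Hadamard, as you wrote it, gives $\hat g(x',y_n)=y_n\int_0^1\partial_{y_n}\hat g(x',ty_n)\,dt$ for every $(x',y_n)$ with $x'\in\Supp\alpha$, i.e.\ on $\Supp\varphi$. The missing ingredient in your write-up, then, is the product structure of $\varphi$ in the flow-box coordinates, which makes the vanishing set on $\{y_n=0\}$ coincide (up to closure) with the shadow of the support.
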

Finally, a version of Seeley`s extension argument for differentiable mappings on simple polytopes, a special case of \cite[Application 1]{Han23}, will be needed: 
\begin{la} \label{lem:Seely}
Consider $Q_0= [0,1[^i \times ]-1,1[^{n-i}\subseteq \R^n$, $\hat U =]-\infty,1[^i \times ]-1,1[^{n-i} \subseteq \mathbb{R}^n$ and a locally convex space $F$. Then for each $k\in \N\cup \{\infty\} $ there exists a continuous linear extension operator 
$$\Ext^k \colon  C^k (Q_0,F) \rightarrow C^k (\hat U,F).$$
\end{la}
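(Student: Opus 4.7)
The strategy is to reduce the statement to the Seeley-type extension theorem for the standard corner $[0,\infty[^i\times\R^{n-i}\subseteq \R^n$ provided by \cite[Application 1]{Han23}, by transporting both $Q_0$ and $\hat U$ to this model via an explicit product diffeomorphism.

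First, I would fix a $C^\infty$-diffeomorphism $\tilde\phi\colon \;]{-\infty},1[\;\to \R$ which restricts to a diffeomorphism $[0,1[\;\to [0,\infty[$, for instance $\tilde\phi(t):=-\log(1-t)$, together with a $C^\infty$-diffeomorphism $\psi\colon\;]{-1},1[\;\to \R$, for instance $\psi(t):=\tan(\pi t/2)$. Their product
\[
\Phi(y_1,\ldots,y_n):=(\tilde\phi(y_1),\ldots,\tilde\phi(y_i),\psi(y_{i+1}),\ldots,\psi(y_n))
\]
is a $C^\infty$-diffeomorphism $\hat U\to\R^n$ which restricts to a $C^\infty$-diffeomorphism $\Phi_0:=\Phi|_{Q_0}\colon Q_0\to [0,\infty[^i\times \R^{n-i}$.

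Applying \ref{basicpull} to $\Phi$, $\Phi^{-1}$, $\Phi_0$ and $\Phi_0^{-1}$ yields that the pullbacks $\Phi^*\colon C^k(\R^n,F)\to C^k(\hat U,F)$ and $\Phi_0^*\colon C^k([0,\infty[^i\times \R^{n-i},F)\to C^k(Q_0,F)$ are continuous linear topological isomorphisms. I would then invoke \cite[Application 1]{Han23} to obtain a continuous linear extension operator
\[
E^k\colon C^k([0,\infty[^i\times \R^{n-i},F)\to C^k(\R^n,F),
\]
and set
\[
\Ext^k:=\Phi^*\circ E^k\circ (\Phi_0^{-1})^*.
\]
This is a composition of continuous linear maps, and $\Ext^k(f)|_{Q_0}=f$ follows directly from $\Phi|_{Q_0}=\Phi_0$ together with $E^k(g)|_{[0,\infty[^i\times \R^{n-i}}=g$.

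The only real obstacle is verifying that \cite[Application 1]{Han23} applies in exactly the generality needed, namely an arbitrary locally convex target $F$ and every $k\in \N\cup\{\infty\}$ (so that the Seeley-type series defining the extension converges in the compact-open $C^\infty$-topology in the case $k=\infty$). The explicit citation by the authors is meant to cover precisely this, so the remaining work is just the bookkeeping of the diffeomorphism reduction, which is routine thanks to \ref{basicpull}.
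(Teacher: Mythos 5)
Your proposal is correct and rests on the same citation that the paper itself relies on. The paper gives no worked proof here: it simply asserts the lemma as ``a special case of \cite[Application~1]{Han23}''. What your argument adds is an explicit mechanism for how one would read the lemma off from a Seeley-type extension theorem for a \emph{model} corner: you transport $Q_0\sub\hat U$ to $[0,\infty[^i\times\R^{n-i}\sub\R^n$ by the product diffeomorphism $\Phi=(\tilde\phi^{\times i},\psi^{\times(n-i)})$, apply the extension operator there, and conjugate back using the pullback isomorphisms supplied by \ref{basicpull}. The verification that $\Ext^k=\Phi^*\circ E^k\circ(\Phi_0^{-1})^*$ is continuous, linear, and a right inverse of restriction is carried out correctly, and the choice of the two one-dimensional diffeomorphisms does what you need.

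The one place where your route and the paper's may differ is exactly what you flag. If Hanusch's Application~1 is formulated for $C^k(U\cap A,F)\to C^k(U,F)$ with $U\sub\R^n$ open and $A$ a suitable closed convex set with dense interior, then the paper's phrase ``special case'' is literal: one takes $U=\hat U$ and $A=[0,\infty[^i\times\R^{n-i}$ and is finished with no diffeomorphism at all, and your reduction, while correct, is an extra layer. If Application~1 instead covers only the global corner $[0,\infty[^i\times\R^{n-i}\sub\R^n$ (or $[0,\infty[^n\sub\R^n$), then a reduction along the lines you give is genuinely needed, and the student's route fills a gap the paper glossed over. Either way the argument is sound, and you are right that the only substantive hypothesis to check is that Hanusch's operator works for arbitrary locally convex $F$ and for $k=\infty$ in the compact-open $C^\infty$-topology, which is precisely the content of the citation.
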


We are now in a position to generalise the local controllability results obtained in \cite{GS24} for manifolds with smooth boundary. For this we will first establish a localisation result on a simple embedded polytope $M\subseteq \R^n$.
\begin{numba} \label{subsect:loc_lem}
Let $x_0$ be any arbitrary point in $M$. By using local coordinates $(y_1, \dots, y_n)$ as in Lemma~\ref{sta-cha}, we many assume that $x_0 =0$ in $Q_0 \colon= [0,1 [^i \times ]-1,1[^{n-i}$ contained in some larger cube $Q \subseteq \mathbb{R}^n$ with $y_j =0$ as a facet for $j=1, \dots,i$. 

Assume that we have space of vector fields $\scrV$ on $Q$ satisfying the assumptions of Theorem~\ref{thm-control}. Write $\hat F_j$ for the facet $y_j =0$.
Define the face $F = \cap_{j=1}^i \hat F_j$, and observe that $0$ is in the interior of this face. If $x_0$ is in the interior of $Q$, then we put $i=0$ and $F = Q$. Since $\Gr(\scrV)$ acts transitively on the interior of $F$, by Sussmann's Orbit Theorem, see \cite{McKay07}, there exists vector fields $Y_1,\dots,Y_{n-i}$ and $\varphi_1,\dots, \varphi_{n-i} \in \Gr(\scrV)$ such that if $X_j = \varphi_{j,*} Y_j$, then
$$T_0 F = \spn\{ X_1, \dots, X_{n-i}\}.$$
If $i=0$, we have a basis for $T_0M$. For the remaining cases, we use assumption (II) of Theorem~\ref{thm-control} and the following argument to construct a full basis.

For we know that for any $j=1, \dots,i$, there exists a point $x_j$ with neighbourhood $V_j$ and vector field $Z_j$ such that $Z_j|_{V_j \cap \hat F_j}  = 0$ and with $\nabla_{\nu_j} Z_j \neq T\hat F_j$. By choosing a local coordinate system $(y_{j,1},\dots,y_{j,n})$ in $V_j$ with $x_j$ in center, such that $\hat F_{j_2} \cap V_j$ correspond to $y_{j,j_2} =0$. Then we can write $Z_j$ as $Z_j =\sum_{l =1}^n Z^l_j \partial_{y_{j,l}}$ with $\partial_{y_{j,j}} Z^l_j(x_j) \neq 0$. 
Without loss of generality, we can rescale $Z_j$ to have $\partial_j Z_ j^j =1$. Note since $Z_j$ is $\Vect_{\rm str}(M)$, it also has to be tangent to $\hat F_l$ for $l \neq j$, which means that $Z^l_j(\hat F_l \cap V_j) =0$, and in particular implies that $\partial_{y_{j,j}} Z^l(\hat F_l \cap V_j)=0$ as $\partial_{y_{j,j}}$ is tangent to $\hat F_l$. It now follows since $0 \in F = \cap_{l=1}^i \hat F_l$ that
$$\sum_{l=1}^n \partial_{y_{j,j}} Z^l_j(0) \partial_{y_{j,l}}= \partial_{y_{j,j}} \mod T_0 F.$$
In summary we can locally write  $Z_j = y_{j,j} \hat Z_j$ with $\hat Z_j(0) = \partial_{y_{j,j}} \bmod T_0 F$.
Finally, let $u_j$ be a bump function on $V_j$ satisfying Lemma~\ref{lemma:varphi}, and let $\hat \varphi_j \in \scrG$ be a map satisfying $\varphi_j(x_j) = x_0$, which exists by (I). Define local vector fields around $x_0$,
$$\hat X_j =\varphi_{j,*} u_j Z_j.$$
We have
$$T_0 M = \spn \{ X_1, \dots, X_{n-i}, \hat X_1, \dots, \hat X_i\}$$
where we have identified $x_0$ with $0$ in the coordinates $(y_1,\dots, y_n)$.

Define a larger cube $\hat Q = \prod_{j=1}^n [-\sup_{y\in Q} |y_j|,  \sup_{y\in Q} |y_j|]$. By using the Lemma~\ref{lem:Seely}, we can extend the vector fields to $\hat Q$.
We need another space of structure preserving maps which are in spirit similar to the stratified vector fields of \Cref{facesandstratvec}.

\begin{defn}
For a any open set $U$ in a simple embedded polytope $Q\subseteq \R^n$, write $C^\infty_{\rm fr}(U,Q)$ for the smooth functions $\phi$ such that $\phi(F) \subseteq F$ for any face $F$. 
We endow $C^\infty_{\rm fr}(U,Q)$ with the compact open $C^\infty$-topology (i.e.  the subspace topology induced by the inclusion $C^\infty_{\rm fr}(U,Q) \subseteq C^\infty(U,Q)$.
\end{defn}
 In other words elements in $C^\infty_{\rm fr} (U,Q)$ are face respecting. Note that the group $\Diff^{\rm fr}(Q)$ is contained by definition in $C^\infty_{\rm fr}(Q,Q)$.
 Let now $\phi \in C^\infty_{\rm fr}(Q_0,Q)$. By definition, we must have $\phi(0) \in \intOP F$ and since $\Gr(\scrV)$ acts transitively on the interior of $F$, we can write $\phi = \phi_1 \circ \psi$ where $\phi_1 \in \Gr(\scrV)$ and $\psi(0)=0$. Furthermore, by combining Lemma~\ref{lem:Xn} and Lemma~\ref{lemma:varphi}, there exists smooth functions $f_1,\dots, f_{n-i}$, $\hat g_1, \dots, \hat g_i$, $g_1, \dots, g_i$ and a neighbourhood $U_0$ of $0$ in $\R^n$ such that
\begin{align*}
\Ext^\infty(\psi)|_{U_0} &= e^{f_1 X_1} \circ \cdots \circ e^{f_{n-i}X_{n-i}} \circ e^{\hat g_1 \hat X_1}  \circ \cdots \circ e^{\hat g_i \hat X_i}|_{U_0} \\
&= \varphi_1 \circ e^{(f_1 \circ \varphi_1) Y_1} \circ \varphi_1^{-1} \circ \cdots \circ \varphi_{n-i} \circ  e^{(f_{n-i} \circ \varphi_{n-i}) Y_{n-i}} \varphi_{n-i}^{-1} \\
& \qquad \circ \hat \varphi_1 \circ  e^{(\hat g_1 \circ \hat \varphi_1) u_1 \hat Z_i} \hat \varphi_i^{-1} \circ \cdots \circ \hat \varphi_i \circ  e^{(\hat g_i \circ \hat \varphi_i) u_i \hat Z_i} \circ \hat  \varphi_i^{-1}|_{U_0} \\
&= \varphi_1 \circ e^{(f_1 \circ \varphi_1) Y_1} \circ \varphi_1^{-1} \circ \cdots \circ \varphi_{n-i} \circ  e^{(f_{n-i} \circ \varphi_{n-i}) Y_{n-i}} \varphi_{n-i}^{-1} \\
& \qquad \circ \hat \varphi_1 \circ  e^{( g_1 \circ \hat \varphi_1) u_1 Z_i} \hat \varphi_i^{-1} \circ \cdots \circ \hat \varphi_i \circ  e^{( g_i \circ \hat \varphi_i) u_i Z_i} \circ \hat  \varphi_i^{-1}|_{U_0}
\end{align*}
with $\hat g_j \circ \hat \varphi_j = y_{j,j} g_j \circ \varphi_j$ and with $U_0$ chosen independently of $\psi$. Restricting back to $Q_0$ and defining $U = U_0 \cap Q_0$, we have our desired result that for some neighbourhood $\scrU$ of the identity in $C^\infty_{\rm fr}(U, Q)$, we have $\scrU \subseteq \Gr(\hat \scrV)$. Since this result is local, we obtain the following general conclusion.
\begin{la}[Localisation lemma] \label{lemma:loc}
Let $M$ be a simple polytope and $x \in M$. Then there exists a neighbourhood $U$ of $x$ and a neighbourhood $\scrU$ of the identity in $C_{\mathrm{fr}}^\infty(U,M)$ such that $\scrU \subseteq \Gr(\hat \scrV)|_U$.
\end{la}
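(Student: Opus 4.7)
The plan is essentially to execute the local argument that the preceding discussion sets up, and then to observe that the argument is purely local so the conclusion holds at every $x\in M$. First, I would fix $x\in M$ with $i:=\ind_M(x)$ and use Lemma~\ref{sta-cha} to pass to a standard chart identifying an open neighborhood of $x$ with an open neighborhood $Q_0=[0,1[^i\times\,]{-1},1[^{n-i}$ of $0$ sitting inside a larger open cube $Q\subseteq\R^n$, so that the facets through $x$ become the coordinate facets $F_j=\{y_j=0\}$ and the smallest face through $x$ becomes $F=\bigcap_{j=1}^i F_j$. Under this identification elements of $\Vect_{\mathrm{str}}(M)$ near $x$ are tangent to every $F_j$, and $C^\infty_{\mathrm{fr}}(U,Q)$ consists of maps preserving each $F_j\cap U$.

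Next I would produce two collections of vector fields that together span $T_0\R^n$. Using hypothesis~(I) of Theorem~\ref{thm-control}, $\Gr(\scrV)$ acts transitively on $\intOP F$, so I can pick $Y_1,\ldots,Y_{n-i}\in\scrV$ and $\varphi_1,\ldots,\varphi_{n-i}\in\Gr(\scrV)$ such that $X_k:=(\varphi_k)_*Y_k$ satisfy $\spn\{X_1(0),\ldots,X_{n-i}(0)\}=T_0F$. Using hypothesis~(II) applied at $0\in \hat F_j$, I obtain for each $j\in\{1,\ldots,i\}$ a vector field $Z_j\in\hat\scrV$ vanishing on a neighborhood of $0$ in $\hat F_j$ and with $\nabla_{\partial_{y_j}}Z_j(0)\notin T_0\hat F_j$; after rescaling and combining with the stratified vanishing $Z_j^l|_{\hat F_l}=0$ for $l\neq j$, locally one writes $Z_j=y_j\hat Z_j$ with $\hat Z_j(0)\equiv\partial_{y_j}\pmod{T_0F}$. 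Then $\hat Z_1(0),\ldots,\hat Z_i(0),X_1(0),\ldots,X_{n-i}(0)$ span $T_0\R^n$. Using Lemma~\ref{lem:Seely} I would extend all these data smoothly to the larger cube $\hat Q$ so that the flow constructions of Lemma~\ref{lem:Xn} and Lemma~\ref{lemma:varphi} become applicable.

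Now given any $\phi\in C^\infty_{\mathrm{fr}}(U,Q)$ close to $\id$ (for a suitably chosen open neighborhood $U$ of $0$ in $Q_0$), I would use transitivity of $\Gr(\scrV)$ on $\intOP F$ to write $\phi=\phi_1\circ\psi$ with $\phi_1\in\Gr(\scrV)$ and $\psi(0)=0$, then apply Lemma~\ref{lem:Xn} to the extended vector fields $X_1,\ldots,X_{n-i},\hat Z_1,\ldots,\hat Z_i$ (whose values at $0$ span) to obtain smooth functions $f_1,\ldots,f_{n-i}$ and $\hat g_1,\ldots,\hat g_i$ vanishing at $0$ such that, on a neighborhood $U_0$ of $0$ chosen uniformly for $\psi$ in a neighborhood of the identity,
\[
\Ext^\infty(\psi)|_{U_0}=e^{f_1X_1}\circ\cdots\circ e^{f_{n-i}X_{n-i}}\circ e^{\hat g_1\varphi_1\hat Z_1}\circ\cdots\circ e^{\hat g_i\varphi_i\hat Z_i}\big|_{U_0}.
\]
The crucial observation is then that $\phi$, being face-respecting, must preserve each coordinate facet, and since the factors involving $X_k$ already do (they are tangent to $F$), each composition $e^{\hat g_j\varphi_j\hat Z_j}$ must preserve $\{y_j=0\}\cap V$ for a small neighborhood $V$. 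Lemma~\ref{lemma:varphi} then forces $\hat g_j=y_jg_j$ on the support of $\varphi_j$, yielding $\hat g_j\varphi_j\hat Z_j=g_j\varphi_j Z_j\in\hat\scrV$.

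Combining these factorizations and putting $\phi_1$ back in front, I obtain $\phi|_U$ as a finite composition of exponentials of elements of $\hat\scrV$, together with the element $\phi_1\in\Gr(\scrV)\subseteq\Gr(\hat\scrV)$; hence $\phi|_U\in\Gr(\hat\scrV)|_U$. The main obstacle is the step where $\hat g_j\varphi_j\hat Z_j$ must be rewritten as $g_j\varphi_j Z_j$: this is exactly where the structural hypothesis~(II) of Theorem~\ref{thm-control} is used in tandem with Lemma~\ref{lemma:varphi}, and it requires that the face-preservation of the full composition $\phi$ indeed propagates down to each individual factor $e^{\hat g_j\varphi_j\hat Z_j}$ preserving the corresponding facet. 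A careful choice of the neighborhood $\scrU$ of $\id$, so that all intermediate compositions stay in the chart domain and so that the order of compositions respects the nested structure of facets, makes this propagation automatic and completes the proof.
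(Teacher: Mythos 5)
Your proposal follows essentially the same route as the paper's own argument in the subsection preceding the lemma: pass to a standard chart via Lemma~\ref{sta-cha}, produce $X_1,\ldots,X_{n-i}$ spanning $T_0F$ from hypothesis~(I), produce $Z_j=y_j\hat Z_j$ from hypothesis~(II), extend via Lemma~\ref{lem:Seely}, factor $\phi=\phi_1\circ\psi$ and apply Lemma~\ref{lem:Xn} together with Lemma~\ref{lemma:varphi} to rewrite $\hat g_j\varphi_j\hat Z_j$ as $g_j\varphi_j Z_j\in\hat\scrV$. The step you flag as delicate (that face-preservation of $\phi$ propagates to the individual exponential factors) is treated at the same level of detail in the paper, so your proposal is a faithful reconstruction rather than a different proof.
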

\end{numba}

 We are now ready to prove our main results for this section.
\begin{proof}[Proof of Theorem~\ref{thm-identity-boundary}]
We can choose an open cover of $M$, by neighbourhoods satisfying the properties in the Localisation \Cref{lemma:loc}. By compactness of $M$ we may select a finite subcover $U_i, i=1,\ldots ,k$ together with a subordinate partition of unity $\lambda_i, i=1,\ldots , k$. Then we can prove the result using standard fragmentation techniques as in \cite[Section~3.2]{GS24}. For the readers convenience we repeat the key steps: Let $\mathscr{O}$ be an open, connected $\id_M$-neighbourhood in $\Diff^{\fr} (M)$. Define $\mathscr{O}_j = \{\phi \in \mathscr{O} | \text{supp} \phi \subseteq U_i\}$ and note that $\mathscr{O}\subseteq \mathscr{O}_1 \circ \mathscr{O}_2 \circ \cdots \circ \mathscr{O}_k$. To see this pick for  $\phi \in \mathscr{O}$ a smooth curve $s \mapsto \varphi_s \in \mathscr{O}$ with $\varphi_0 = \id_M$ and $\varphi_1 = \phi$. Then define $\psi_j(x):= \varphi_{\lambda_1 (x)+ \cdots + \lambda_j (x)}(x)$ and $\phi_j$, $j=1,2, \dots, l$, by
$$\phi_1 = \psi_1, \qquad \phi_{j+1} = \psi_{j+1} \circ \psi_j^{-1}.$$
Then $\phi_{j} \in \mathscr{O}_j$. Using the open identity neighbourhoods $\mathscr{U}_j$ from \Cref{lemma:loc} define a new neighbourhood $\hat{\mathscr{O}}$ generated by $\mathscr{O}_j \cap \mathscr{U}_j$. Since $\hat{\mathscr{O}}\subseteq \hat{\mathscr{G}}$, the result follows.
\end{proof}

The proof of the following result is similar to \cite[Corollary 1.3]{GS24}.
\begin{proof}[Proof of Theorem~\ref{thm-control}]
 We choose $\scrV = \hat{\scrV} = \Vect_{\rm str}(M)$ and recall the steps in the beginning of \Cref{subsect:loc_lem} leading to the proof of the Localisation lemma, Lemma~\ref{lemma:loc}. In particular, we adhere to the notation introduced there. If $x_0$ is then a given point on the boundary, we can choose local coordinates $y$  but for each point on the boundary, we choose local coordinates $(y_1, \dots, y_n)$, such that if $x_0$ is contained exactly in the facets $\hat F_1$, \dots, $\hat F_i$ and no other, then these are on the respective hyperplanes $y_1 = 0$, \dots, $y_i = 0$. Define vector fields 
$$Z_j = y_j \hat Z_j = y_j \partial_{y_j}, \qquad j= 1,\dots, i,$$
and $X_j = \partial_{y_j}$ for $j = i+1,\dots, n$. It follows then by the proof of Lemma~\ref{lemma:loc}, that we can write any diffeomorphism preserving in some neighbourhood $U$ as
$$\phi|_U = e^{f_{i+1} X_{i+1}} \circ \cdots \circ e^{f_n X_n} \circ e^{\hat g_1 \hat Z_1} \circ \cdots \circ e^{\hat g_i \hat Z_i}|_U,$$
However, since $\phi$ is the identity on the boundary, the same properties have to hold for all of these flows in separate coordinates, giving us that
$$\phi = e^{f_{i+1} X_{i+1}} \circ \cdots \circ e^{f_n X_n} \circ e^{g_1 Z_1} \circ \cdots \circ e^{g_i Z_i}, \qquad f_j|_{\cap_{r=1}^i \hat F_r} =0, j= i+1, \dots n,$$
and with $\hat g_j = y_j g_j$ for $j=1,\dots, i$.
This shows that locally around any point on the boundary, any diffeomorphism that is the identity on the boundary, can be written as a composition of flows of vector field in $\Vect^{\partial=0}(M)$. Finally, we now follow the steps of \cite[Proof of 1.2 in Section 3.2]{GS24} to construct the neighbrohood of the identity from the local open neighborhoods just constructed. This finishes the proof.
\end{proof}

%% Square example
\begin{example}
We return to the square $S= [0,1]^2$ discussed in \Cref{ex:square}. The square and its diffeomorphism group have recently been considered as the most basic example for applications in numerical analysis. See e.g. \cite{CaGaRaS23} for an account of machine learning techniques on diffeomorphism groups of the square and other polytopes. Our results show that any diffeomorphism of the square can be generated by vector fields that are tangent to the boundary. Furthermore, we can use vector fields that vanish at the boundary to generate diffeomorphisms that equals the identity on the boundary. But we can also use smaller family of vector fields such that
$$Z_1 = (2y-1) x(1-x) \partial_x, \qquad Z_2 = y(y-1) \partial_y.$$
If we define $\scrV= \spn \{ Z_1, Z_2\}$, then this collection satisfies the assumptions of Theorem~\ref{thm-control}, even though the vector fields only span a one-dimensional space along the line $y = 1/2$.
\end{example}

\printbibliography
\noindent
{\small{\bf Helge Gl\"{o}ckner}, Universit\"{a}t Paderborn, Department of
Mathematics,\\
Warburger Str.\ 100,
33098 Paderborn,
Germany; glockner@math.uni-paderborn.de\\[2.3mm]
{\bf Erlend Grong},
University of Bergen, Department of Mathematics, P.O.\ Box 7803, 5020 Bergen,
Norway;
erlend.grong@uib.no\\[2.3mm]
{\bf Alexander Schmeding},
Institutt for matematiske fag, NTNU Trondheim,\\
Alfred Getz' vei 1 Gl\o{}shaugen,
7034 Trondheim, Norway;
alexander.schmeding@ntnu.no}\vfill
\end{document}